\newtheorem{theorem}{Theorem}[section]
\newtheorem*{theorem*}{Theorem}
\newtheorem{proposition}[theorem]{Proposition}
\newtheorem{coro}[theorem]{Corollary}
\newtheorem{lemma}[theorem]{Lemma}
\newtheorem{rem}[theorem]{Remark}
\newtheorem{definition}[theorem]{Definition}
\newtheorem{question}{Question}
\newtheorem*{assum-gamma}{Assumptions on $\Gamma$}
\newtheorem*{assum-L}{Assumptions on $L$}
\theoremstyle{plain}
\renewcommand{\epsilon}{\varepsilon}
\newcommand{\eps}{\epsilon}
\newcommand{\diam}{\textrm{diam}}
\newcommand{\dive}{\textrm{div}}
\newcommand\N{\mathbb{N}}
\newcommand\R{\mathbb{R}}
\newcommand{\calM}{\ensuremath{\mathcal{M}}}
\DeclareMathOperator{\supp}{supp}
\DeclareMathOperator{\loc}{loc}
\def\Xint#1{\mathchoice
   {\XXint\displaystyle\textstyle{#1}}%
   {\XXint\textstyle\scriptstyle{#1}}%
   {\XXint\scriptstyle\scriptscriptstyle{#1}}%
   {\XXint\scriptscriptstyle\scriptscriptstyle{#1}}%
   \!\int}
\def\XXint#1#2#3{{\setbox0=\hbox{$#1{#2#3}{\int}$}
     \vcenter{\hbox{$#2#3$}}\kern-.5\wd0}}
\def\aver#1{\Xint-_{#1}}
\let\original@addcontentsline\addcontentsline
\newcommand{\dummy@addcontentsline}[3]{}
\newcommand{\DeactivateToc}{\let\addcontentsline\dummy@addcontentsline}
\newcommand{\ActivateToc}{\let\addcontentsline\original@addcontentsline}
\newcommand\restr[2]{{
  \left.\kern-\nulldelimiterspace 
  #1 
  \vphantom{\big|} 
  \right|_{#2} 
  }}
\numberwithin{theorem}{section}
\numberwithin{equation}{section}
\title[Riesz transforms through reverse H\"older and Poincar\'e inequalities]{Riesz transforms through reverse H\"older and Poincar\'e inequalities}
\author[Fr\'ed\'eric Bernicot, Dorothee Frey]{Fr\'ed\'eric Bernicot, Dorothee Frey}
\address{Fr\'ed\'eric Bernicot, CNRS - Universit\'e de Nantes, Laboratoire Jean Leray, 2 rue de la Houssini\`ere, 44322 Nantes cedex 3, France\\}
\email{frederic.bernicot@univ-nantes.fr}
\address{Dorothee Frey, CNRS - Universit\'e Paris-Sud, Laboratoire de Math\'ematiques, UMR 8628 du CNRS, 91405 {\sc Orsay}, France\\}
\email{dorothee.frey@univ-nantes.fr}
\thanks{This project is partly supported by ANR projects AFoMEN no. 2011-JS01-001-01 and HAB no. ANR-12-BS01-0013.}
\date{\today}
\begin{document}

\begin{abstract}  
 We study the boundedness of Riesz transforms in $L^p$ for $p>2$ on a doubling metric measure space endowed with a gradient operator and an injective, $\omega$-accretive operator $L$ satisfying Davies-Gaffney estimates. If $L$ is non-negative self-adjoint, we show that under a reverse H\"older inequality, the Riesz transform is always bounded on $L^p$ for $p$ in some interval $[2,2+\eps)$, and that $L^p$ gradient estimates for the semigroup imply boundedness of the Riesz transform in $L^q$ for $q \in [2,p)$. This improves results of \cite{ACDH} and \cite{AC}, where the stronger assumption of a Poincar\'e inequality and the assumption $e^{-tL}(1)=1$ were made. The Poincar\'e inequality assumption is also weakened in the setting of a sectorial operator $L$. 
 In the last section, we study elliptic perturbations of Riesz transforms.
\end{abstract}

\subjclass[2010]{58J35, 42B20}

\keywords{Poincar\'e inequalities, Riesz transform, Reverse H\"older inequality, harmonic functions}

\maketitle

\section{Introduction}

The $L^p$ boundedness of Riesz transforms on manifolds has been widely studied in recent years. 
The Riesz transform has to be thought of as  ``one side" (or one half) of the commutator between two first order operators: the gradient, coming from the metric structure of the underlying space, and the square root of a second order operator under consideration (e.g. the Laplace-Beltrami operator on a Riemannian manifold). Thus, Riesz transform bounds allow to compare the two corresponding first order homogeneous Sobolev spaces. The aim of this article is to give new sufficient criteria for the boundedness of Riesz transforms in $L^p$ for $p>2$, and to study its stability under elliptic perturbations.\\

Before describing our results in detail, let us introduce the setting and some notation.

\subsection{Setting}
\label{sec11}

Let $(X,d)$ be a locally compact separable metric space, equipped with a Borel measure $\mu$, finite on compact sets and strictly positive on any non-empty open set. For $\Omega$ a measurable subset of $X$, we shall  denote $\mu\left(\Omega\right)$ by $\left|\Omega\right|$.  For all $x \in X$ and all $r>0$, denote by $B(x,r)$ the open ball for the metric $d$ with centre $x$ and radius $r$, and  by $V(x,r)$ its measure $|B(x,r)|$.  For a ball $B$ of radius $r$ and a real $\lambda>0$, denote by $\lambda B$   the ball concentric  with $B$ and with radius $\lambda r$. We shall sometimes denote by $r(B)$ the radius of a ball $B$. We will use $u\lesssim v$ to say that there exists a constant $C$ (independent of the important parameters) such that $u\leq Cv$, and $u\simeq v$ to say that $u\lesssim v$ and $v\lesssim u$. Moreover, for $\Omega\subset X$ a subset of finite and non-vanishing measure and $f\in L^1_{loc}(X,\mu)$, $\aver{\Omega} f \, d\mu=\frac{1}{|\Omega|} \int f \, d\mu$ denotes the average of $f$ on $\Omega$. 

From now on, we assume that $(X,d,\mu)$ is a doubling metric measure space, which means that the measure $\mu$ satisfies the doubling property, that is
  \begin{equation}\label{d}\tag{$V\!D$}
     V(x,2r)\lesssim  V(x,r),\quad \forall~x \in X,~r > 0.
    \end{equation}
As a consequence, there exists  $\nu>0$  such that
     \begin{equation*}\label{dnu}\tag{$V\!D_\nu$}
      V(x,r)\lesssim \left(\frac{r}{s}\right)^{\nu} V(x,s),\quad \forall~r \ge s>0,~ x \in X,
    \end{equation*}
    
We abstractly define a gradient operator $\Gamma$, in terms of which we   express first order regularity on the metric space.

\begin{assum-gamma}
Assume that there exists a sublinear operator $\Gamma$, with dense domain ${\mathcal F} \subset L^2(X,\mu)$. Assume that $\Gamma$ is a local operator, which means that for every $f \in {\mathcal F}$,
$$ \supp \Gamma f \subseteq \supp f.$$ 
Moreover, assume that $\Gamma$ satisfies a \emph{relative Faber-Krahn inequality}, that is for every ball $B$ of radius $r\leq \delta \,\diam(X)$ with some $\delta<1$, and all  $f\in\mathcal{F}$ supported in $B$,
\begin{equation}  \label{FKR}
	\left(\int_{B} |f|^2\,d\mu\right)^{1/2}
	\lesssim r \left(\int_{B} |\Gamma f|^2\,d\mu\right)^{1/2}.
\end{equation}
\end{assum-gamma}

We then consider an unbounded operator $L$ on $L^2(X,\mu)$ under the following assumptions.

\begin{assum-L}
Assume that  $L$ is an injective, $\omega$-accretive operator with domain $\mathcal D\subset {\mathcal F}$ in $L^2(X,\mu)$, where $0 \leq \omega < \pi/2$.
Assume that for all $f\in {\mathcal D}$,
 \begin{equation}\tag{$R_2$}
\| \Gamma f \|_2 \lesssim \|L^{1/2} f\|_2.
 \label{E2}
\end{equation}
Assume that $L$ satisfies $L^2$ Davies-Gaffney estimates, which means that 
for every $r>0$ and all balls $B_1$,$B_2$ of radius $r$
\begin{equation} \tag{$DG$}
\| e^{-r^2L} \|_{L^2(B_1) \to L^2(B_2)} + \| r\Gamma e^{-r^2L} \|_{L^2(B_1) \to L^2(B_2)} \lesssim e^{-c \frac{d^2(B_1,B_2)}{r^2}}.
\label{eq:DG}
\end{equation}
\end{assum-L}

By our assumptions, $L$ is an injective, maximal accretive operator on $L^2(X,\mu)$, and therefore has a bounded $H^\infty$ functional calculus on $L^2(X,\mu)$. The assumption $\omega<\frac{\pi}{2}$ implies that $-L$ is the generator of an analytic semigroup in $L^2(X,\mu)$. 
See \cite{ADM,Kato} for definitions and further considerations, and Section \ref{sec2} for examples.\\

We will assume the above throughout the paper. We abbreviate the setting with $(X,\mu,\Gamma,L)$.

\subsection{Notation}

For a (sub)linear operator $T$ and two exponents $1\leq p\leq q \leq \infty$, we say that $T$ satisfies \emph{$L^p$-$L^q$ off-diagonal estimates at scale $r>0$} if there exist implicit constants such that for all balls $B_1$,$B_2$ of radius $r$ and every function $f\in L^p(X,\mu)$ supported in $B_1$, we have
$$ \left( \aver{B_2} |Tf|^q \, d\mu \right)^{1/q} \lesssim \exp\left( -c \frac{d^2(B_1,B_2)}{r^2}\right) \left( \aver{B_1} |f|^p \, d\mu \right)^{1/p}.$$
Denote 
\begin{align*}
 p_L &:= \inf \left\{ p\in(1,2];\ \textrm{for all $t>0$, $e^{-tL}$ satisfies $L^p$-$L^2$ off-diagonal estimates} \right\},\\
 p^L &:= \sup \left\{ p\in[2,\infty);\ \textrm{for all $t>0$, $e^{-tL}$ satisfies $L^2$-$L^p$ off-diagonal estimates} \right\}.
\end{align*}
We assume that $p_L \neq 2$ and $p^L \neq 2$.\\
By composing off-diagonal estimates, it follows that for every $p,q\in(p_L,p^L)$ with $p\leq q$, the semigroup $e^{-tL}$ satisfies $L^p$-$L^{q}$ off-diagonal estimates. We deduce that for every $p\in(p_L,p^L)$,
$$ \sup_{t>0} \|e^{-tL}\|_{p \to p} < \infty,$$
and $(e^{-tL})_{t>0}$ is   bounded analytic on $L^p(X,\mu)$, see \cite[Corollary 1.5]{BK2}. In particular, it means that  $(tLe^{-tL})_{t>0}$ is bounded on $L^p(X,\mu)$ uniformly in $t>0$.

For $p\in(1,\infty)$, one says that $(R_p)$ holds if the Riesz transform ${\mathcal R}:=\Gamma L^{-1/2}$ is bounded on $L^p(X,\mu)$, which means
\begin{equation} \label{rp}
 \|\Gamma f \|_{p} \lesssim \| L^{1/2}f \|_{p},  \quad \forall\,f\in {\mathcal D}, \tag{$R_p$}
\end{equation}
and that estimates $(G_p)$ on the gradient of the semigroup holds if
\begin{equation} \label{Gp}
\sup_{t>0} \|\sqrt{t}\Gamma e^{-tL} \|_{p\to p} <+\infty \tag{$G_p$}.
\end{equation}
We refer the reader to \cite{ACDH} for the introduction of this notion, and where the investigation of the link between $(G_p)$ and $(R_p)$ has started. In the following, whenever we talk about \eqref{Gp}, we shall implicitly assume $p \in (p_L,p^L)$. \\

Let us now formulate scale-invariant Poincar\'e inequalities on $L^p(X,\mu)$, which may or may not be true. More precisely, for $p\in[1,+\infty)$, one says that  $(P_p)$ holds if
\begin{equation}\tag{$P_p$}
 \left( \aver{B} | f - \aver{B} f d\mu |^p d\mu \right)^{1/p} \lesssim r \left(\aver{B} |\Gamma f|^p \, d\mu \right)^{1/p}, \qquad \forall\, f\in {\mathcal F}, \label{Pp}
\end{equation}
where $B$ ranges over balls in $X$ of radius $r$.
Recall that $(P_p)$ is weaker and weaker as $p$ increases, that is $(P_p)$ implies $(P_q)$ for $q>p$, see for instance \cite{HaKo}.  The version for $p=\infty$ is trivial in the Riemannian setting.

\subsection{Main results and state of the art}

As started in \cite{ACDH}, it is natural to study the connection between gradient estimates $(G_p)$ and the boundedness of the Riesz transform $(R_p)$. It is easy to see that for every $p\in(p_L,p^L)$, $(R_p)$ implies $(G_p)$ by analyticity of the semigroup in $L^p(X,\mu)$.
For $p=2$, we have \eqref{E2} by assumption, and therefore also $(G_2)$. 
Following \cite{CD,BK1,HM}, it is known that $(G_p)$ and $(R_p)$ also hold for every $p\in(p_L,2)$.

But the following question still remains open:

\begin{question} \label{Q2} For $p\in(2,p^L)$, does $(G_p)$ imply $(R_p)$ or at least $(R_{q})$ for every $q\in(2,p)$?
\end{question}

This question is still open in the general framework we are considering here, and no counter-example is known. We refer the reader to the last paragraph of Subsection \ref{subsec:ex} where we describe two situations where $(G_p)$ is known but not $(R_p)$. So it is a very deep question to understand which extra property (or maybe none) is sufficient / necessary to deduce $(R_p)$ from $(G_p)$.

 In \cite{ACDH}, a positive answer has been obtained under the additional assumption of the Poincar\'e inequality $(P_2)$ and the conservation property. Here, we say that $L$ satisfies the \emph{conservation property}, if $e^{-tL}(1)=1$ for all $t>0$ (\footnote{We remark that the assumption \eqref{eq:DG} allows to define $e^{-tL}$ as an operator from $L^\infty(X,\mu)$ to $L^2_{\loc}(X,\mu)$.}), and that $\Gamma$ satisfies the \emph{conservation property}, if for every $\phi \in {\mathcal F}$, one has $\Gamma \phi=0$ on every ball $B$ such that $\phi$ is constant on $B$.
  With our notation, the result in \cite{ACDH} states as

\begin{theorem*}[\cite{ACDH}]
Let $(X,\mu,\Gamma,L)$ be as in Section \ref{sec11}. Assume $(P_2)$, $p^L=\infty$, and assume that $L$ and $\Gamma$ satisfy the conservation property. Suppose $p \in (2,\infty)$. If $(G_p)$ holds, then $(R_{q})$ holds for every $q\in(2,p)$.
\end{theorem*}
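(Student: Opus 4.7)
We apply the Auscher-type extrapolation principle underlying \cite{ACDH}, with base exponent $2$ and upper exponent $p$. For a suitable family $\{A_r\}_{r>0}$ of approximating operators, it will suffice to verify that for every ball $B$ of radius $r$ and every $y \in B$
\begin{equation}\label{E:gplan1}
\left(\aver{B} |\mathcal{R}(I - A_r) f|^2 \, d\mu\right)^{\!1/2} \lesssim \sum_{j \geq 1} \alpha_j \, \calM(|f|^2)(y)^{1/2},
\end{equation}
\begin{equation}\label{E:gplan2}
\left(\aver{B} |\mathcal{R} A_r f|^p \, d\mu\right)^{\!1/p} \lesssim \sum_{j \geq 1} \beta_j \, \calM(|\mathcal{R} f|^2)(y)^{1/2},
\end{equation}
with summable sequences $\{\alpha_j\}, \{\beta_j\}$; this will yield $(R_q)$ for every $q \in (2,p)$. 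We choose $A_r := I - (I - e^{-r^2L})^m$ with $m \in \N$ to be fixed large.

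For \eqref{E:gplan1}, write $\mathcal{R}(I - A_r) = \Gamma (I - e^{-r^2L})^m L^{-1/2}$ and use the subordination formula $L^{-1/2} = c_0\int_0^\infty e^{-tL}\,t^{-1/2}\,dt$. Spectral calculus combined with \eqref{eq:DG} for $r\Gamma e^{-r^2L}$ and analyticity yields $L^2 \to L^2$ off-diagonal bounds for $\Gamma(I - e^{-r^2L})^m e^{-tL}$ at scale $\max(r,\sqrt{t})$, with an additional factor $(r^2/(r^2+t))^m(r^2+t)^{-1/2}$ coming from the powers of $I - e^{-r^2L}$. Integrating in $t$ and taking $m$ large produces the summable off-diagonal decay needed for \eqref{E:gplan1}.

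For \eqref{E:gplan2}, observe that $A_r$, being a polynomial in $e^{-r^2L}$, commutes with $L^{-1/2}$, so $\mathcal{R} A_r f = \Gamma A_r g$ with $g := L^{-1/2} f$ and $\mathcal{R} f = \Gamma g$. Writing $A_r = \sum_{k=1}^m c_k e^{-kr^2L}$, we estimate each $\Gamma e^{-kr^2L} g$ on $B$. The conservation hypotheses are pivotal here: since $e^{-kr^2L}(1) = 1$ and $\Gamma$ annihilates constants, for any constant $c$ we have $\Gamma e^{-kr^2L}g = \Gamma e^{-kr^2L}(g - c)$ on $B$. Composing $(G_p)$ with the $L^2 \to L^p$ off-diagonal bounds for $e^{-tL/2}$, available since $p^L=\infty$, produces $L^2 \to L^p$ off-diagonal bounds for $\sqrt{t}\,\Gamma e^{-tL}$. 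Decomposing $g$ over annuli $C_j \subset 2^{j+1} B \setminus 2^j B$ and subtracting the corresponding averages $c_j := \aver{2^{j+1}B} g$ then gives
\[
\left(\aver{B} |\Gamma e^{-kr^2L} g|^p\right)^{\!1/p} \lesssim \frac{1}{r}\sum_{j \geq 0} e^{-c 2^{2j}} \left(\aver{2^{j+1}B} |g - \aver{2^{j+1}B} g|^2\right)^{\!1/2}.
\]
Applying $(P_2)$ on $2^{j+1}B$ bounds the inner oscillation by $2^{j+1}r \left(\aver{2^{j+1}B}|\Gamma g|^2\right)^{\!1/2} = 2^{j+1}r \left(\aver{2^{j+1}B}|\mathcal{R} f|^2\right)^{\!1/2}$; the factors of $r$ cancel and \eqref{E:gplan2} follows with $\beta_j \lesssim 2^j e^{-c 2^{2j}}$.

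The main technical difficulty lies in \eqref{E:gplan2}, where three ingredients must interlock precisely: (i) the two conservation properties, used to subtract the annular averages $c_j$; (ii) the upgraded $L^2 \to L^p$ off-diagonal bounds for $\sqrt{t}\,\Gamma e^{-tL}$, obtained by composing $(G_p)$ with the off-diagonal bounds on $e^{-tL}$ guaranteed by $p^L = \infty$; and (iii) the Poincar\'e inequality at exponent $2$. It is essential that $(P_2)$ --- not some $p$-version --- produces an $L^2$-average of $\mathcal{R} f$ on the right-hand side, matching the exponent demanded by the extrapolation principle and avoiding a circular dependence on $(R_p)$.
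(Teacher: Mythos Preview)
Your approach is correct and reproduces the original argument of \cite{ACDH}, which the paper cites without giving a self-contained proof. One technical slip: in the annular decomposition for the second estimate you cannot subtract a \emph{different} constant $c_j$ on each annulus $C_j$, because the conservation properties annihilate only global constants, not localized ones such as $c_j\chi_{C_j}$. The standard repair is to subtract a single constant $c := \aver{2B}g$ throughout, then control $\bigl(\aver{2^{j+1}B}|g-c|^2\bigr)^{1/2}$ by telescoping through the intermediate averages $\aver{2^iB}g$, $0 \le i \le j$, and applying $(P_2)$ on each $2^{i+1}B$; the resulting extra factor of order $j$ is harmless against the Gaussian weight $e^{-c\,2^{2j}}$.

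It is worth noting that the paper's own generalization, Theorem~\ref{thmbis}, which assumes only $(P_{p_0})$ for some $p_0 \in [2,p^L)$ in place of $(P_2)$, cannot follow your direct route. There the second extrapolation estimate is verified instead via a Caccioppoli-type inequality (Proposition~\ref{prop:caccioppoli}), Keith--Zhong self-improvement of Poincar\'e and Gehring's lemma (packaged in Lemma~\ref{lem}), together with Lemma~\ref{lemma} to control the higher-order $(r^2L)^M e^{-r^2L}$ term. Your simpler argument is precisely the special case $p_0=2$, where the Poincar\'e exponent already matches the base exponent of the extrapolation and no self-improvement machinery is needed.
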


 However, in order to have $(R_p)$ for some $p>2$, it is known that neither $(P_2)$ is a necessary assumption, as the example of \cite{CCH} for the two copies of $\R^n$ glued smoothly along their unit circles shows, nor is the conservation property, as the example of some Schr\"odinger operator shows.

In the present work, we shall push the argument of \cite{ACDH}  further by weakening these two assumptions $(P_2)$ and conservation property. We consider two situations, first the more specific situation of a non-negative, self-adjoint operator, and then, in a second part, the situation of a sectorial operator as described above. In both cases, we are able to relax on the assumptions imposed in \cite{ACDH}. 

 Let us first discuss the case of non-negative, self-adjoint operators $L$. Here, we can in particular make use of the fact that the assumption \eqref{eq:DG} is equivalent to the finite propagation speed  property of $\sqrt{L}$. This fact was observed in \cite{Sikora}, and it was used there in order to study the boundedness of the Riesz transform for $p<2$, as well as for $p>2$ on $1$-forms. We then introduce a property $(RH_p)$ (see Definition \ref{def:RHp}), which describes a reverse local H\"older inequality for the gradient of harmonic functions. This property already appeared in \cite{Gia0} for the solutions of elliptic PDEs. In the context of Riesz transform bounds, it was already used in various results for Schr\"odinger operators \cite{Shen1,ABA,Shen0}, and in \cite{AC} for the Laplace-Beltrami operator on a Riemannian manifold. In Subsection \ref{subsec:r}, we prove the following.

\begin{theorem} \label{Thm:rh} Let $(X,\mu,\Gamma,L)$ be as in Section \ref{sec11}. Assume that $L$ is a non-negative, self-adjoint operator, with $p^L>\nu$ or with the additional assumption \eqref{eq:RR2}.  Suppose $p\in(2,p^L)$. If $(G_p)$ and $(RH_p)$ hold, then $(R_{q})$ holds for every $q\in(2,p)$.
\end{theorem}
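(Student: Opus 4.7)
The approach is to combine an Auscher--Martell type extrapolation principle with a harmonic approximation argument for the semigroup. For $q\in(2,p)$, the bound $(R_q)$ follows once we produce, for every ball $B=B(x_0,r)$ and every $f\in\mathcal{D}$, a decomposition
\[
\Gamma L^{-1/2} f = g_B + h_B,\qquad g_B := \Gamma L^{-1/2}(I-e^{-r^2L})^m f,\qquad h_B := \Gamma L^{-1/2}\bigl(I-(I-e^{-r^2L})^m\bigr) f,
\]
with $m$ a large integer, such that for a.e.\ $x\in B$,
\[
\Bigl(\aver{B}|g_B|^2\,d\mu\Bigr)^{1/2}\lesssim \calM\bigl(|f|^2\bigr)(x)^{1/2},\qquad \Bigl(\aver{B}|h_B|^p\,d\mu\Bigr)^{1/p}\lesssim \calM\bigl(|\Gamma L^{-1/2} f|^2\bigr)(x)^{1/2}.
\]

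The estimate on $g_B$ is the standard off-diagonal part. Using the subordination formula $L^{-1/2}=c\int_0^\infty e^{-tL}\,dt/\sqrt{t}$, we rewrite $g_B=c\int_0^\infty \Gamma e^{-tL}(I-e^{-r^2L})^m f\,dt/\sqrt{t}$; combining the $(I-e^{-r^2L})^m$ cancellation with \eqref{eq:DG} for $\Gamma e^{-tL}$ yields $L^2$--$L^2$ off-diagonal bounds for $g_B$ at scale $r$ with polynomial decay $(r/d(\cdot,\cdot))^{2m}$. Choosing $m>\nu/4$ and summing the contributions of the dyadic annuli $2^{j+1}B\setminus 2^j B$ delivers the first inequality.

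The bound on $h_B$ is the main point and is where both $(G_p)$ and $(RH_p)$ come in. Since $(I-e^{-r^2L})^m$ is a polynomial in $e^{-r^2L}$ and commutes with $L^{-1/2}$,
\[
h_B = \sum_{k=1}^m c_{m,k}\,\Gamma e^{-kr^2L}(L^{-1/2} f).
\]
Setting $u:=L^{-1/2} f$, it suffices to show that for each $s\simeq r^2$ and a.e.\ $x\in B$,
\[
\Bigl(\aver{B}|\Gamma e^{-sL} u|^p\,d\mu\Bigr)^{1/p} \lesssim \calM\bigl(|\Gamma u|^2\bigr)(x)^{1/2}.
\]
Here we use self-adjointness: since $\sqrt L$ has finite propagation speed (Sikora), up to an exponentially decaying $L^2$ tail we may localize $u$ to a large dilate $\kappa B$. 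On $2B$, introduce the harmonic approximant $v$, constructed variationally as the minimizer of $\int|\Gamma\cdot|^2\,d\mu$ among functions agreeing with $e^{-sL} u$ outside $2B$ (well-posed via \eqref{FKR}). The defect $w:=e^{-sL}u-v$ then satisfies $Lw=-Le^{-sL}u$ with vanishing trace at $\partial 2B$, so \eqref{FKR} together with the analyticity bound $\|sLe^{-sL}\|_{2\to 2}\lesssim 1$ give $\|\Gamma w\|_{L^2(2B)}\lesssim\|\Gamma u\|_{L^2(\kappa B)}$. Applying $(RH_p)$ to $v$ then yields
\[
\Bigl(\aver{B}|\Gamma v|^p\,d\mu\Bigr)^{1/p}\lesssim \Bigl(\aver{2B}|\Gamma v|^2\,d\mu\Bigr)^{1/2}\lesssim \calM\bigl(|\Gamma u|^2\bigr)(x)^{1/2},
\]
and $(G_p)$ is invoked to promote the $L^2$ control of $\Gamma w$ to the required $L^p$ integrability.

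The principal obstacle is the harmonic approximation step: in the abstract setting $(X,\mu,\Gamma,L)$ there is no off-the-shelf Dirichlet theory, so both the approximant $v$ and the estimate on $w$ must be engineered from \eqref{FKR}, \eqref{eq:DG} and $(G_p)$ alone. The dichotomy $p^L>\nu$ vs.\ \eqref{eq:RR2} plays its role precisely here: it supplies the Sobolev-type self-improvement needed to transfer the reverse Hölder bound from $v$ back to $\Gamma e^{-sL} u$ and hence to close the bootstrap.
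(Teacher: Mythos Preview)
Your outline follows the correct extrapolation framework, and harmonic approximation is indeed the heart of the matter. But there is a genuine gap in the step where you write that ``$(G_p)$ is invoked to promote the $L^2$ control of $\Gamma w$ to the required $L^p$ integrability.'' The property $(G_p)$ is a uniform bound on $\sqrt{t}\,\Gamma e^{-tL}$; it says nothing about $\Gamma w$ for an arbitrary compactly supported $w$. Once you have decomposed $e^{-sL}u = v + w$ on $2B$ with $v$ harmonic and $w$ supported in $2B$, you only know $\|\Gamma w\|_{L^2(2B)}$, and nothing in your toolkit upgrades this to an $L^p$ average on $B$. The obvious fix---insert a further smoothing $S$ with $L^2$--$L^p$ off-diagonal bounds on $\Gamma S$ so as to write $\Gamma S(e^{-s''L}u) = \Gamma S v + \Gamma S w$---does handle $\Gamma S w$ via Faber--Krahn and the support of $w$, but it destroys the harmonicity: $Sv$ is no longer harmonic, so $(RH_p)$ no longer applies to it, and the argument stalls.

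This tension is exactly why the paper does \emph{not} use the semigroup approximant $I-(I-e^{-r^2L})^m$. Instead it takes $A_r=\varphi^2(r\sqrt{L})$ with $\widehat\varphi$ compactly supported, so that $\varphi(r\sqrt{L})$ has finite propagation speed. The harmonic decomposition is applied to $h:=\varphi(r\sqrt{L})L^{-1/2}f$, and then the \emph{second} factor $\varphi(r\sqrt{L})$ is applied to the pieces: for the compactly supported piece $h-u$ one uses the $L^2$--$L^p$ off-diagonal bounds of $\Gamma\varphi(r\sqrt{L})$ (coming from $(G_p)$, Lemma~\ref{lemma0}) together with \eqref{FKR}; and for the harmonic piece $u$ one uses the crucial identity $(I-\varphi(r\sqrt{L}))u=0$ on $B$, which holds because $I-\varphi(r\sqrt{L})$ factors through $L$ via a finite-speed multiplier and hence annihilates harmonic functions locally. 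Thus $\Gamma\varphi(r\sqrt{L})u=\Gamma u$ on $B$, and $(RH_p)$ applies exactly. Your semigroup-based $A_r$ cannot produce this exact localisation. Finally, your description of the dichotomy $p^L>\nu$ versus \eqref{eq:RR2} is off: it governs whether the harmonic approximation estimate (Lemma~\ref{lem:LM} versus Lemma~\ref{lem:LM-ter}) carries an extra term $r\bigl(\aver{2B}|Lh|^2\,d\mu\bigr)^{1/2}$; when it does, the hypothesis $p^L>\nu$ enters through Lemma~\ref{lemma-bis} to bound $r\sqrt{L}\,\varphi(r\sqrt{L})f$ by $\calM_2(f)$. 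It is not a Sobolev-type self-improvement acting on $v$ or $w$.
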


Following \cite[Theorem 6.3]{BCF1}, we know that if $L$ and $\Gamma$ satisfy the conservation property, the combination $(G_p)$ with $(P_p)$ for some $p>2$ implies $(P_2)$, and so implies $(R_q)$ for every $q\in(2,p)$ by \cite{ACDH}.

Since we will also check that $(G_p)$ with $(P_p)$ (and in particular $(P_2)$) implies $(RH_p)$ (see Proposition \ref{prop:pprp}), this new result improves \cite{ACDH}. Moreover, we remove the assumption of the conservation property. This allows to apply the result to e.g. Schr\"odinger operators.
However, Question A still remains open in its generality. since for the two glued copies of $\R^n$ with its Riemannian gradient and the Laplace operator, we have $(G_p)$ and $(R_p)$ for $p\in (1,n)$, but a simple argument shows that $(RH_p)$ is not satisfied for $p>2$ (we leave it to the reader to check this).

Theorem \ref{Thm:rh} will be proved by an extensive use of harmonic functions and a fundamental lemma, which allows to locally ``approximate" (in some sense) a function by a harmonic function, see Lemma \ref{lem:LM}.\\

With similar methods, we can then also give a partial answer to the following question. 

\begin{question} \label{Q3} 
Suppose $p\in[2,p^L)$. Under which condition does $(G_p)$ imply $(R_{p+\eps})$ for some $\eps>0$?
\end{question} 

It is clear that in general, $(G_p)$ cannot imply $(R_{p+\eps})$ without any additional assumption. A counterexample is again the example of the two glued copies of $\R^n$. For $n=2$, it is known that for every $p>2$, $(R_p)$ does not hold, but $(R_2)$ and $(G_2)$ hold trivially. In Subsection \ref{subsec:ppgp}, we prove the following sufficient criterion.

\begin{theorem} \label{thm:imp} Let $(X,\mu,\Gamma,L)$ be as in Section \ref{sec11} with both $\Gamma$ and $L$ satisfying the conservation property. Assume that $L$ is a non-negative, self-adjoint operator. If $(G_p)$ and $(P_p)$ hold for some $p\geq 2$, then there exists $\eps>0$ such that $(R_{p+\eps})$ and $(G_{p+\eps})$ hold.
\end{theorem}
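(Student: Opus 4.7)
The approach is to combine Theorem \ref{Thm:rh} with a self-improvement mechanism based on Gehring's lemma, applied to reverse Hölder inequalities for the gradient of harmonic functions.

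First, from the assumptions $(G_p)$ and $(P_p)$, together with the conservation properties of $L$ and $\Gamma$, Proposition \ref{prop:pprp} yields the reverse Hölder estimate $(RH_p)$ for the gradient of local $L$-harmonic functions. Since $(X,d,\mu)$ is a doubling space, a standard Gehring-type self-improvement applies to this local reverse Hölder inequality, producing $(RH_{p+\epsilon_0})$ for some $\epsilon_0>0$. By shrinking $\epsilon_0$ if necessary, we may assume $p+\epsilon_0<p^L$.

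The core technical step is to upgrade $(G_p)$ to $(G_{p+\epsilon})$ for some $\epsilon\in(0,\epsilon_0)$, using the improved reverse Hölder inequality. Fix $t>0$, $f\in L^{p+\epsilon}\cap L^2$, and a ball $B$ of radius $\sqrt{t}$. Invoking the harmonic approximation Lemma \ref{lem:LM}, decompose $e^{-tL}f=u_B+v_B$ on $2B$ with $Lu_B=0$ on $2B$ and $v_B$ a quantitatively small error. For the harmonic piece, $(RH_{p+\epsilon})$ gives a local $L^{p+\epsilon}$-$L^2$ comparison of $\Gamma u_B$; for the error, $(G_p)$ combined with the sharp $L^2$ off-diagonal decay of $\Gamma e^{-tL}$ (which holds because $L$ is non-negative self-adjoint, via finite propagation speed of $\cos(s\sqrt{L})$) furnishes the needed control. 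Summing these local bounds over a doubling covering of $X$ at scale $\sqrt{t}$, uniformly in $t>0$, produces $(G_{p+\epsilon})$.

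Once $(G_{p+\epsilon})$ and $(RH_{p+\epsilon})$ are both available and $p+\epsilon\in(2,p^L)$, Theorem \ref{Thm:rh} applies at exponent $p+\epsilon$ and delivers $(R_q)$ for every $q\in(2,p+\epsilon)$. Choosing $q=p+\epsilon/2$ and relabelling $\epsilon$ gives both $(R_{p+\epsilon})$ and $(G_{p+\epsilon})$, as claimed. The principal obstacle is the third step: although Gehring's lemma is a purely local, measure-theoretic self-improvement, converting the improved reverse Hölder estimate into the global semigroup bound $(G_{p+\epsilon})$ demands uniform-in-$t$ control of a harmonic approximation at every scale $\sqrt{t}$, where the self-adjointness of $L$ and the resulting sharp off-diagonal bounds are essential.
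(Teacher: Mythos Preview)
Your proposal has a genuine gap in the third step, and a secondary issue in the last step.

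\textbf{The main gap.} To pass from $(G_p)$ to $(G_{p+\epsilon})$ you split $e^{-tL}f=u_B+v_B$ via Lemma~\ref{lem:LM} and then claim that ``$(G_p)$ combined with sharp $L^2$ off-diagonal decay'' controls $\Gamma v_B$ in $L^{p+\epsilon}$. But $v_B$ is produced by a Riesz representation argument in a Hilbert space; it is \emph{not} of the form $e^{-sL}g$ for any explicit $g$, so neither $(G_p)$ nor the off-diagonal estimates for the semigroup apply to it. Lemma~\ref{lem:LM} gives only the $L^2$ bound $\|\Gamma v_B\|_{L^2(2B)}\lesssim r\|Le^{-tL}f\|_{L^2(2B)}$, and there is no mechanism here to upgrade this to $L^{p+\epsilon}$. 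If you feed this $L^2$ control into a Gehring argument you recover at best a reverse H\"older around exponent $2$, hence $(G_{2+\eta})$ --- exactly what Proposition~\ref{prop:RHG} delivers --- but not $(G_{p+\epsilon})$ for a general $p>2$.

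The paper's route to $(G_{p+\epsilon})$ is entirely different and avoids harmonic approximation. It combines the $L^p$ Caccioppoli inequality (Proposition~\ref{prop:Caccioppoli}, obtained from $(G_p)$ and finite propagation speed) with the Keith--Zhong self-improvement $(P_p)\Rightarrow(P_{p-\kappa})$ and the resulting Sobolev--Poincar\'e inequality, yielding directly
\[
\Big(\aver{B_r}|\Gamma e^{-tL}g|^{p}\,d\mu\Big)^{1/p}\lesssim
\Big(\aver{2B_r}|\Gamma e^{-tL}g|^{p-\kappa}\,d\mu\Big)^{1/(p-\kappa)}+\text{remainder},
\]
a reverse H\"older inequality \emph{at level $p$} for the semigroup itself. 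Gehring then gives $(G_{p+\epsilon})$; this is Theorem~\ref{ac}.

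\textbf{A secondary issue.} Your final appeal to Theorem~\ref{Thm:rh} requires $p^L>\nu$ or assumption~\eqref{eq:RR2}, neither of which is among the hypotheses of Theorem~\ref{thm:imp}. The paper instead obtains $(R_{p+\epsilon})$ from Theorem~\ref{thmbis} (using only the conservation property together with $(G_{p+\epsilon})$ and $(P_{p+\epsilon})$, the latter following from $(P_p)$), and packages the whole argument as Theorem~\ref{thm:main2}.
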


Then in the second part (Section \ref{sec:riesz}), we will focus on the situation where the operator $L$ is only sectorial and not necessarily self-adjoint. We show

\begin{theorem} \label{thm:na1}
Let $(X,\mu,\Gamma,L)$ be as in Section \ref{sec11}, with the conservation property for $L$ and $\Gamma$. Suppose $p\in(2,p^L)$. If $(G_p)$ and $(P_p)$ hold, then $(R_{q})$ holds for every $q\in(2,p)$.
\end{theorem}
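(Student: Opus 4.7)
The plan is to follow the Calder\'on--Zygmund / sharp-maximal-function strategy of ACDH (and of the proof of Theorem \ref{Thm:rh}), reducing $(R_q)$ for $q\in(2,p)$ to a pointwise comparison of $\mathcal{R}$ with a family of local ``averaging'' operators. Concretely, I would aim for the Auscher--Martell type estimates: for every ball $B$ of radius $r$, every $f$ in a suitable dense class, and every $x\in B$,
$$
\left(\aver{B}|\mathcal{R} f-\mathcal{A}_B f|^{2}\,d\mu\right)^{1/2}\lesssim \bigl(\mathcal{M}|f|^{2}\bigr)^{1/2}(x),\qquad \left(\aver{B}|\mathcal{A}_B f|^{p}\,d\mu\right)^{1/p}\lesssim \bigl(\mathcal{M}|f|^{p}\bigr)^{1/p}(x).
$$
Once these are in hand, the $L^{q}$-boundedness of $\mathcal{R}$ for $q\in(2,p)$ follows from the Auscher--Martell extrapolation theorem (a generalized Fefferman--Stein argument), together with the maximal inequality in $L^{q/2}$ and $L^{q/p}$.

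The natural choice is $\mathcal{A}_B:=\mathcal{R}\bigl(I-(I-e^{-r^{2}L})^{M}\bigr)$ for a sufficiently large integer $M$, so that
$$
\mathcal{R} f-\mathcal{A}_B f=\Gamma L^{-1/2}(I-e^{-r^{2}L})^{M}f.
$$
For the $L^{p}$ bound on $\mathcal{A}_B f$, I would expand $I-(I-e^{-r^{2}L})^{M}$ as a linear combination of the $e^{-k r^{2}L}$ with $k\geq1$, write $\Gamma L^{-1/2}e^{-kr^{2}L}f=c\int_{0}^{\infty}\Gamma e^{-(t+kr^{2})L}f\,\tfrac{dt}{\sqrt{t}}$, and apply $(G_{p})$ in combination with the $L^{2}$--$L^{p}$ off-diagonal estimates of $e^{-tL}$ (available because $p<p^{L}$) to a dyadic annular decomposition of $f$ around $B$. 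Summing over the annuli yields the desired pointwise bound by $(\mathcal{M}|f|^{p})^{1/p}(x)$.

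The essential new step is the bound on the ``error'' $\mathcal{R}(I-e^{-r^{2}L})^{M}f$, where $(P_p)$ replaces the stronger $(P_2)$ of ACDH. Setting $u:=L^{-1/2}(I-e^{-r^{2}L})^{M}f$, the conservation property of $\Gamma$ allows one to subtract constants freely in $u$ before applying $(P_p)$, which yields
$$
\left(\aver{B}|\Gamma u|^{p}\,d\mu\right)^{1/p}\lesssim \left(\aver{B}|\Gamma u|^{2}\,d\mu\right)^{1/2}+r\,\left(\aver{2B}|Lu|^{p}\,d\mu\right)^{1/p}
$$
after a Moser-type iteration: the first term is controlled by $(R_{2})$ and Davies--Gaffney estimates against $f$ on the annuli of $B$ (using the $M$-fold cancellation of $(I-e^{-r^{2}L})^{M}$), while $Lu=L^{1/2}(I-e^{-r^{2}L})^{M}f$ is handled via $(G_{p})$ and the conservation property of $L$ (which is what gives the factor $e^{-c 4^{k}}$ decay on the $k$-th annulus). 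The Poincar\'e inequality $(P_p)$ is precisely what lets one ``switch'' from the $L^{2}$-level estimate produced by Davies--Gaffney to the $L^{p}$-level estimate needed to match $\mathcal{A}_B f$.

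The main obstacle I expect lies in the Moser-type iteration with $(P_p)$ rather than $(P_2)$: the standard ACDH argument uses Cacciopoli-type estimates for exactly harmonic functions, whereas here one must iterate on the approximately harmonic $u=L^{-1/2}(I-e^{-r^{2}L})^{M}f$, controlling the residue $Lu$ with off-diagonal bounds for $\sqrt{t}\Gamma e^{-tL}$ derived from $(G_{p})$ and the analyticity of the semigroup on $L^{p}$. This is the point at which the full set of hypotheses---$(P_p)$, $(G_{p})$, $p<p^{L}$, and both conservation properties---is genuinely used in combination.
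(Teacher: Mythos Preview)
Your scheme has the two extrapolation conditions set up incorrectly, and the method you propose for the ``main'' term does not converge.

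First, the $L^p$ condition you write,
\[
\left(\aver{B}|\mathcal{A}_B f|^{p}\,d\mu\right)^{1/p}\lesssim \bigl(\mathcal{M}|f|^{p}\bigr)^{1/p}(x),
\]
is too strong: summing over a covering by balls of radius $r$ and letting $r\to 0$ would give $(R_p)$ outright, which is more than the statement claims. Moreover, with $\mathcal{M}_p$ on the right-hand side, the good-$\lambda$ argument needs $\mathcal{M}_p$ bounded on $L^q$, i.e.\ $q>p$, not $q<p$; your reference to ``the maximal inequality in $L^{q/p}$'' is vacuous since $q/p<1$. The correct form of the Auscher--Martell extrapolation (Proposition~\ref{prop:extrapolation} in the paper) is the \emph{bootstrap} condition
\[
\left(\aver{B}|TA_{r}f|^{q}\,d\mu\right)^{1/q}\lesssim \inf_{x\in B}\bigl[\mathcal{M}_2(Tf)(x)+\mathcal{M}_2(f)(x)\bigr],
\]
with $Tf$ on the right. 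Second, your proposed proof of the $L^p$ bound on $\mathcal{A}_B f$ via $\Gamma L^{-1/2}e^{-kr^2L}f=c\int_0^\infty \Gamma e^{-(t+kr^2)L}f\,\frac{dt}{\sqrt{t}}$ and $(G_p)$ fails: $(G_p)$ gives $\|\Gamma e^{-(t+kr^2)L}\|_{p\to p}\lesssim (t+kr^2)^{-1/2}$, and $\int_0^\infty \frac{dt}{\sqrt{t}\sqrt{t+kr^2}}$ diverges logarithmically at infinity; off-diagonal decay does not rescue this, since for $t\gg r^2$ the relevant scale is $\sqrt{t}$, not $r$. Third, you have the roles reversed: the $L^2$ bound on $\mathcal{R}(I-e^{-r^2L})^M f$ is the \emph{easy} step---it follows from \eqref{eq:DG} and $(R_2)$ exactly as in \cite{ACDH}, with no Poincar\'e needed---while the hard step is the $L^q$ bound on $\mathcal{R}P_{r^2}^{(D)}f$.

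The paper's route is as follows. One verifies the bootstrap condition above by proving a reverse H\"older inequality for $g\mapsto \Gamma P_{r^2}^{(D)}g$ (Lemma~\ref{lem}): for $g=L^{-1/2}f$,
\[
\left(\aver{B}|\Gamma P_{r^2}^{(D)}g|^{q}\,d\mu\right)^{1/q}
\lesssim \left(\aver{2B}|\Gamma P_{r^2}^{(D)}g|^{2}\,d\mu\right)^{1/2}
+\sum_{\ell\ge 0}2^{-\ell N}\left(\aver{2^\ell B}|(r^2L)^{M}e^{-r^2L}g|^{p}\,d\mu\right)^{1/p}.
\]
The first term on the right is then bounded by $\mathcal{M}_2(\mathcal{R}f)+\mathcal{M}_2(f)$ using $(R_2)$ and the easy $L^2$ step; the second is controlled by Lemma~\ref{lemma}. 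The reverse H\"older itself is obtained from a non-self-adjoint $L^p$ Caccioppoli inequality with off-diagonal tails (Proposition~\ref{prop:caccioppoli}), combined with the Keith--Zhong self-improvement $(P_p)\Rightarrow(P_{p-\kappa})$, the Sobolev--Poincar\'e inequality (Lemma~\ref{met}), and Gehring's lemma (Theorem~\ref{thm:gehring}). None of this appears in your sketch, and your ``Moser-type iteration'' paragraph is applied to the wrong function ($u=L^{-1/2}(I-e^{-r^2L})^Mf$ rather than $P_{r^2}^{(D)}L^{-1/2}f$) and does not explain how an $L^p$-to-$L^2$ reverse H\"older can come from $(P_p)$ alone without the Keith--Zhong/Gehring machinery.
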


In such a situation, it is unknown if $(G_p)$ together with $(P_p)$ implies $(P_2)$. The proof of this statement in \cite[Theorem 6.3]{BCF1} relies on the self-adjointness of the operator, which is used through the finite  propagation speed property to deduce a perfectly localised Caccioppoli inequality. So as far as we know, the assumptions are actually weaker than those in \cite{ACDH}.\\

In the last section, we finally show that reverse H\"older inequalities in $L^p$ are stable under small elliptic perturbations for $p$ close to $2$.

\begin{theorem} \label{thm:per} Let $(M,\mu)$ be a doubling Riemannian manifold with $\nabla$ the Riemannian gradient and $\Delta$ the Laplace-Beltrami operator. Assume that the heat kernel of $(e^{t\Delta})_{t>0}$ satisfies pointwise Gaussian estimates. If the reverse H\"older property $(RH_{q})$ holds for $-\Delta$ and for some $q>2$, then there exists $\varepsilon>0$ such that for every $p\in(2,2+\varepsilon)$ and every map $A$ with $ \| A-\textrm{Id}\|_\infty \leq \varepsilon$ and $L_A=-\nabla^* A \nabla $ self-adjoint, the properties $(RH_{p})$ and $(R_{p})$ are satisfied with respect to the operator $L_A$.
\end{theorem}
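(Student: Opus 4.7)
The plan is to first verify that $L_A$ satisfies the standing hypotheses of the paper so that Theorem~\ref{Thm:rh} can ultimately be applied to $L_A$, then to establish the reverse H\"older inequality $(RH_p)$ for $L_A$ with $p\in(2,2+\varepsilon)$ by a direct perturbation argument freezing the coefficient matrix to the identity, and finally to invoke Theorem~\ref{Thm:rh} on $L_A$ to deduce $(R_p)$. Since $A$ is self-adjoint with $\|A-\textrm{Id}\|_\infty\leq\varepsilon$, the form $(f,g)\mapsto \int A\nabla f\cdot\overline{\nabla g}\,d\mu$ is closed and non-negative, so $L_A$ is injective, non-negative self-adjoint with the same form domain as $-\Delta$. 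Pointwise Gaussian bounds for $e^{t\Delta}$ transfer to Davies--Gaffney and $L^2$--$L^q$ off-diagonal bounds (all $q<\infty$) for $e^{-tL_A}$ by standard resolvent perturbation, giving $p^{L_A}=\infty$; the same perturbation, together with the pointwise gradient Gaussian bounds for $-\Delta$, produces $(G_p)$ for $L_A$ at $p$ close to~$2$.

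For $(RH_p)$, fix a ball $B$ of radius $r$ and let $u$ be $L_A$-harmonic on $4B$. On $2B$ I would solve the $\Delta$-Dirichlet problem $-\Delta v=0$ with $v|_{\partial(2B)}=u|_{\partial(2B)}$, and set $w:=u-v\in H^1_0(2B)$. Then $w$ satisfies $-\Delta w=\dive((A-\textrm{Id})\nabla u)$ on $2B$, and testing against $w$ gives the energy estimate
\begin{equation*}
  \|\nabla w\|_{L^2(2B)}\leq \varepsilon\,\|\nabla u\|_{L^2(2B)}.
\end{equation*}
Applying $(RH_q)$ for $-\Delta$ to the $\Delta$-harmonic function $v$ on the concentric sub-ball $B$ yields, in average form,
\begin{equation*}
  \left(\aver{B}|\nabla v|^q\,d\mu\right)^{1/q}\lesssim\left(\aver{2B}|\nabla v|^2\,d\mu\right)^{1/2}\lesssim\left(\aver{2B}|\nabla u|^2\,d\mu\right)^{1/2}.
\end{equation*}
To upgrade the $L^2$ estimate on $\nabla w$ to an $L^p$ estimate, I would use that $(RH_q)$ for $-\Delta$, combined with Theorem~\ref{Thm:rh}, gives $(R_s)$ for $-\Delta$ for every $s\in(2,q)$; dualising against the Coulhon--Duong bounds $(R_s)$ for $s\in(1,2]$ coming from the Gaussian kernel, the operator $\nabla(-\Delta)^{-1}\dive$ is bounded on $L^s$ for every $s\in(q',q)$. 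Extending $w$ by zero off $2B$ (valid since $w\in H^1_0(2B)$) identifies $\nabla w=-\nabla(-\Delta)^{-1}\dive\,\widetilde G$ on all of $M$, where $\widetilde G$ is $(A-\textrm{Id})\nabla u$ extended by zero, which yields, for every $p$ slightly above~$2$,
\begin{equation*}
  \left(\aver{B}|\nabla w|^p\,d\mu\right)^{1/p}\leq C\varepsilon\,\left(\aver{2B}|\nabla u|^p\,d\mu\right)^{1/p}.
\end{equation*}
Adding the two bounds through $\nabla u=\nabla v+\nabla w$ produces the weak reverse H\"older inequality
\begin{equation*}
  \left(\aver{B}|\nabla u|^p\,d\mu\right)^{1/p}\leq C_1\left(\aver{2B}|\nabla u|^2\,d\mu\right)^{1/2}+C_2\varepsilon\left(\aver{2B}|\nabla u|^p\,d\mu\right)^{1/p},
\end{equation*}
and for $\varepsilon$ small enough a Gehring--Giaquinta absorption lemma upgrades this to the full $(RH_p)$ for $L_A$.

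The main obstacle is precisely the transfer from the $L^2$ energy estimate on $\nabla w$ to an $L^p$ bound with $p>2$: this is what forces the detour through $(R_s)$ for $-\Delta$ (derived from the hypothesis $(RH_q)$ for $-\Delta$ via Theorem~\ref{Thm:rh}), and the smallness of $\varepsilon$ is essential both to justify the perturbation of the structural properties of $L_A$ in the first step and to run the final absorption argument. Once $(RH_p)$ has been established for $L_A$ on some interval $p\in(2,2+\varepsilon)$, Theorem~\ref{Thm:rh} applied now to $L_A$ (using $p^{L_A}=\infty$ and $(G_p)$ from the first step) yields $(R_p)$, completing the proof.
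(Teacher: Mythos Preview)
Your overall strategy---compare an $L_A$-harmonic function to a $\Delta$-harmonic one with the same boundary data and exploit the smallness of $A-\textrm{Id}$---is the right one, but the passage from the $L^2$ energy estimate on $\nabla w$ to an $L^p$ estimate contains a real gap. The weak equation $-\Delta w=\dive\bigl((A-\textrm{Id})\nabla u\bigr)$ holds on $2B$ only against test functions in $H^1_0(2B)$; extending $w$ and $(A-\textrm{Id})\nabla u$ by zero does \emph{not} yield a global equation on $M$, because the zero extension $\tilde w$ carries an extra distributional Laplacian supported on $\partial(2B)$ (the jump of the normal derivative). Hence $\nabla\tilde w$ is not $-\nabla(-\Delta)^{-1}\dive\,\tilde G$, and the global $L^p$ bound for the Hodge projector says nothing about $\nabla w$. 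What would actually be needed is an $L^p$ bound for the \emph{Dirichlet} Hodge projector on $2B$, uniform in the ball, which does not follow from the global Riesz bounds you invoke. A secondary issue: you appeal to ``pointwise gradient Gaussian bounds for $-\Delta$'' to obtain $(G_p)$ for $L_A$, but only heat-kernel bounds are assumed; gradient bounds on the kernel are strictly stronger and are not available here.

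The paper sidesteps both problems. For $(RH_p)$ it never seeks an $L^p$ bound on the correction: it uses only the $L^2$ estimate on $\nabla(f-u)$ together with the $L^1$--$L^q$ self-improvement of $RH_q(\Delta)$ on the $\Delta$-harmonic piece (Remark~\ref{rem:rh}) to obtain, on every sub-ball,
\[
\Bigl(\aver{B}|\nabla f|^2\,d\mu\Bigr)^{1/2}\leq C_0\aver{2B}|\nabla f|\,d\mu+\eta\Bigl(\aver{2B}|\nabla f|^2\,d\mu\Bigr)^{1/2}
\]
with $\eta$ small; a Whitney-covering argument absorbs the last term, and then Gehring's lemma climbs from $L^2$ to $L^{2+\varepsilon}$. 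In other words, Gehring does the $L^2\to L^p$ lifting that you attempted via the global Hodge projector. For $(G_{p,A})$ the paper runs a separate harmonic-comparison plus Gehring argument (Proposition~\ref{prop:RHGbis}), using the freshly established $RH_p(L_A)$, rather than any perturbation of gradient kernel bounds.
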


\section{Examples of Applications and Preliminaries}
\label{sec2}

\subsection{Examples and Applications} \label{subsec:ex}

\begin{itemize}

\item {\bf Dirichlet forms and subdomains.} 

 Let $(M,d,\nu)$ be a complete space of homogeneous type as above. Consider a self-adjoint operator ${\mathcal L}$ on $L^2(M,\nu)$ and consider $\mathcal{E}$ the quadratic form associated with ${\mathcal L}$, that is
$${\mathcal E}(f,g)=\int_M f {\mathcal L}g\,d\nu.$$ 
If ${\mathcal E}$ is a strongly local and regular Dirichlet form (see \cite{FOT, GSC} for precise definitions) with a carr\'e du champ structure, then with $\Gamma$ being equal to this carr\'e du champ operator,  \eqref{E2} and \eqref{eq:DG} hold. In particular, this is the case if $(M,d,\nu)$ is a Riemannian manifold with $L$ its non-negative Laplace-Beltrami operator and $\Gamma$ the length of the Riemannian gradient.

\medskip

We consider the two following situations:

 \begin{enumerate}
 
 \item Consider the global situation $(X,d,\mu,L)=(M,d,\nu,{\mathcal L})$. In such a case, the typical upper estimate for the kernel of the semigroup is
  \begin{equation}\tag{$DU\!E$}
 p_{t}(x,y)\lesssim
\frac{1}{\sqrt{V(x,\sqrt{t})V(y,\sqrt{t})}}, \quad \forall~t>0,\,\mbox{a.e. }x,y\in
 X.\label{due}
\end{equation}
Under  \eqref{d}, \eqref{due} self-improves into a Gaussian upper estimate 
\begin{equation}\tag{$U\!E$}
p_{t}(x,y)\lesssim
\frac{1}{V(x,\sqrt{t})}\exp
\left(-c\frac{d^{2}(x,y)}{t}\right), \quad \forall~t>0,\, \mbox{a.e. }x,y\in
 X.\label{UE}
\end{equation}
See \cite[Theorem 1.1]{Gr1} for the Riemannian case,   \cite[Section 4.2]{CS} for a metric measure space setting. This yields $(p_{\mathcal L},p^{\mathcal L})=(1,\infty)$, and the Faber-Krahn inequality \eqref{FKR} holds. 

In such a situation, the operator $L$ satisfies the conservation property \cite{GSC}, and the space $(X,d,\mu)$ may or may not satisfies a Poincar\'e inequality $(P_p)$.

\item Local situation with Dirichlet boundary conditions. Given an open subset $U\subset M$, we can also look at the heat semigroup in $U$ with Dirichlet boundary conditions. So consider ${\mathcal L}_U$ the operator with domain 
$${\mathcal D}_U:=\left\{f\in {\mathcal D}({\mathcal L}), \ \textrm{supp}(f) \subset U\right\}.$$
Then it is known that ${\mathcal L}_U$ generates a semigroup in $L^2(U,\nu)$. 

However, it is important to emphasise that due to the Dirichlet boundary conditions, the conservation property does not hold for ${\mathcal L}_U$!

If the domain $U$ is assumed to be Lipschitz, then under \eqref{due} for the underlying space, it is known that the semigroup associated with ${\mathcal L}_U$ has also a heat kernel with Gaussian pointwise bounds and so $(p_{{\mathcal L}_U},p^{{\mathcal L}_U})=(1,\infty)$. Moreover the (eventual) Poincar\'e inequality $(P_2)$  on the ambiant space $(M,d,\nu)$ can be used to study the balls intersecting the boundary $\partial U$ (as done in e.g. \cite{Gia0,Shen1}). In particular, it is proved that if $(M,d,\nu)$ satisfies  $(P_2)$ and $U$ is Lipschitz, then for some $p>2$ a reverse H\"older property $(RH_p)$ holds for $L={\mathcal L}_U$, as well as $(G_p)$ and $(R_p)$.

Assume that $(M,d,\nu,{\mathcal L})$ is doubling and satisfies a Poincar\'e inequality $(P_2)$ with \eqref{due}. Consider $U\subset M$ an unbounded inner uniform open subset (see \cite{GSC} for a precise definition), which includes the case of Lipschitz domains. Then \cite[Theorem 3.12]{GSC} shows that $U$ equipped with its inner structure is also doubling. In order to get around the fact that ${\mathcal L}_U$ is not conservative, we can use the $h$-transform (as described in \cite{GSC}): let $h>0$ be a reduite of $U$ (which is a positive solution on $U$ of ${\mathcal L} h=0$ with Dirichlet conditions) and consider the operator
$$ {\mathcal L}_U^h f  = h^{-1} {\mathcal L_U} (h f)$$
with domain
$${\mathcal D}[{\mathcal L}_U^h]:=\{f\in L^2(U,h^2d\mu),\ hf \in {\mathcal D}_U \}.$$
Then ${\mathcal L}_U^h$ is self-adjoint with respect to the measure $h^2 d\mu$ and satisfies the conservation property.
According to \cite{GSC}, this structure defines a strongly local and regular Dirichlet form with a Poincar\'e inequality $(P_2)$ with respect to its carr\'e du champ $\Gamma:= |\nabla|$. Moreover the semigroup associated with ${\mathcal L}_U^h$ has also a heat kernel with Gaussian pointwise bounds (relatively to the measure $h^2 d\mu$) so that $(p_{{\mathcal L}_U^h},p^{{\mathcal L}_U^h})=(1,\infty)$ (see \cite{GSC}). So by our Theorem \ref{thm:main2}, there exists $\epsilon>0$ such that $(R_p)$ holds for $p\in[2,2+\epsilon)$ which means
$$ \left( \int_U |\nabla f|^p h^2\, d\mu \right)^{1/p} \lesssim \left( \int_U |h^{-1} {\mathcal L_U}^{1/2} (h f)|^p h^2\, d\mu \right)^{1/p},$$
which corresponds to the $L^p(U, h^2d\mu)$-boundedness of $\nabla h^{-1} {\mathcal L_U}^{1/2} (h \cdot)$. Moreover, we also have $(RH_p)$ for $p>2$ close to $2$.

\end{enumerate}

\medskip

\item {\bf Schr\"odinger operators.}

If $(M,d,\mu)$ is a doubling Riemannian manifold with $\nabla$ the Riemannian gradient, we may consider the Schr\"odinger operator $L:=-\nabla^* \nabla + V$ associated with a potential $V$. Let us focus on the case $V\geq 0$, otherwise the situation is more difficult and we refer the reader to \cite{Ass,AssO} (and references therein) for some works giving assumptions on $V$ to guarantee the boundedness of the Riesz transform. If $V$ is non-negative and belongs to some Muckenhoupt reverse H\"older space, then boundedness of the Riesz transform has been obtained in \cite{Shen1,ABA}.

For $V\geq 0$, we may consider $\Gamma$ given by
$$ \Gamma(f) = \left(|\nabla f|^2 +  V|f|^2\right)^{1/2} \quad \textrm{or} \quad \Gamma(f) = |\nabla f|.$$
Moreover, if \eqref{due} holds on the heat semigroup generated by  $-\nabla^* \nabla$, then it also holds for $L$, hence $(p_L,p^L)=(1,\infty)$.

Because of the potential $V$, the operator $L$ does not satisfies the conservation property. If $(M,d,\mu)$ satisfies the Poincar\'e inequality $(P_2)$ with respect to $\nabla$, then it can be proved that $L$ satisfies a $(RH_p)$ property for some $p>2$ and with $\Gamma=|\nabla |$ (see Proposition \ref{prop:P2RH}).

As an application of Theorem \ref{thm:th}, we deduce the following result.

\begin{theorem} Assume that $(M,d,\mu)$ is a doubling Riemannian manifold satisfying \eqref{due} and $(P_2)$. Then for every potential $V\geq 0$ there exists $\epsilon>0$ such that for $L=-\nabla^* \nabla + V$ and $\Gamma=|\nabla |$, we have $(RH_q)$ as well as $(G_q)$ and $(R_q)$ 
for every $q\in(2,2+\epsilon)$.
\end{theorem}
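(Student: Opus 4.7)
The plan is to cast $(M,d,\mu,|\nabla|,L)$ with $L=-\nabla^*\nabla+V$ into the framework of Section~\ref{sec11}, and then verify the hypotheses of Theorem~\ref{Thm:rh} for some $p$ slightly above $2$. Since $V\geq 0$, the Feynman--Kac formula (equivalently, the Trotter product formula together with positivity) yields the pointwise domination $|e^{-tL}f|\leq e^{t\Delta}|f|$, so the Gaussian bound \eqref{due} assumed for $e^{t\Delta}$ is inherited by $e^{-tL}$ and gives $(p_L,p^L)=(1,\infty)$; in particular $p^L>\nu$. The energy identity $\|\nabla f\|_2^2+\int V|f|^2\,d\mu=\|L^{1/2}f\|_2^2$ delivers \eqref{E2} and, by analyticity of $(e^{-tL})$ on $L^2$, also $(G_2)$. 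Davies--Gaffney \eqref{eq:DG} for $e^{-tL}$ follows from the Gaussian estimate, while its gradient part comes from an $L^2$ Caccioppoli argument that again uses $V\geq 0$. Finally $L$ is non-negative self-adjoint by construction.

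The core of the argument is Proposition~\ref{prop:P2RH}, asserting that $(P_2)$ implies $(RH_p)$ for some $p_0>2$ in the present Schr\"odinger setting. For an $L$-harmonic $u$, i.e.\ a solution of $-\Delta u+Vu=0$, testing the equation against $\eta^2 u$ with $\eta$ a cutoff supported in $2B$ and equal to $1$ on $B$ yields the Caccioppoli inequality
\begin{equation*}
\int_B |\nabla u|^2\,d\mu + \int_B V|u|^2\,d\mu \lesssim \frac{1}{r^2}\int_{2B}|u|^2\,d\mu,
\end{equation*}
in which the sign $V\geq 0$ keeps the potential term favourable. Combined with the Sobolev--Poincar\'e inequality produced by $(P_2)$ and doubling, this upgrades to a reverse H\"older estimate of the form $\bigl(\aver{B}|\nabla u|^{p_0}\,d\mu\bigr)^{1/p_0}\lesssim \bigl(\aver{2B}|\nabla u|^2\,d\mu\bigr)^{1/2}$ for some $p_0>2$, and Gehring's self-improvement lemma in the metric measure setting extends this to $(RH_p)$ for every $p\in[2,2+\epsilon_0)$.

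To conclude, I would first promote $(RH_p)$ to $(G_p)$ on this same range: on each ball $B$ of radius $\sqrt{t}$, approximate $e^{-tL}f$ by an $L$-harmonic function $\tilde u$ via Lemma~\ref{lem:LM}, apply $(RH_p)$ to $\tilde u$, and control the $L^2$ average of $|\nabla \tilde u|$ by $(G_2)$ and by the approximation remainder. A standard covering and maximal function argument then yields $(G_p)$. Theorem~\ref{Thm:rh} now applies, since $L$ is non-negative self-adjoint, $p^L=\infty>\nu$, and both $(G_p)$ and $(RH_p)$ hold; it yields $(R_q)$ for every $q\in(2,p)$. Because $(R_q)$ implies $(G_q)$ by analyticity on $L^q$, and $(RH_q)$ follows from $(RH_p)$ by H\"older's inequality, the triple $(RH_q)$, $(G_q)$, $(R_q)$ holds for every $q\in(2,2+\epsilon)$ with any $\epsilon<p-2$.

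The main obstacle is Proposition~\ref{prop:P2RH}. Although Caccioppoli and Sobolev--Poincar\'e are classical ingredients, running the argument intrinsically in the abstract doubling metric measure setting requires the metric version of Gehring's lemma and careful control showing that the constants are independent of both the scale and the potential $V$. The harmonic approximation step used to lift $(RH_p)$ to $(G_p)$ is also delicate, since the error in approximating $e^{-tL}f$ by an $L$-harmonic function must be controlled in $L^2$ with normalisations matching those in the reverse H\"older inequality.
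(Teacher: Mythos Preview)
Your overall strategy matches the paper's. The paper states this theorem as a direct application of Theorem~\ref{thm:th}: once the framework of Section~\ref{sec11} is verified with $L$ self-adjoint and $p^L=\infty>\nu$ (via the domination $|e^{-tL}f|\leq e^{t\Delta}|f|$ you mention), and once $(RH_q)$ for some $q>2$ is obtained from Proposition~\ref{prop:P2RH}, Theorem~\ref{thm:th} delivers $(R_p)$ for $p\in(2,2+\epsilon)$. You do exactly this, merely unpacking Theorem~\ref{thm:th} into its two components Proposition~\ref{prop:RHG} (reverse H\"older $\Rightarrow (G_p)$, via the harmonic approximation of Lemma~\ref{lem:LM}) and Theorem~\ref{Thm:rh} ($(G_p)+(RH_p)\Rightarrow (R_q)$). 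The packaging differs; the route does not.

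There is, however, a gap in your sketch of how $(P_2)$ yields $(RH_{p_0})$, and it is not the obstacle you flag. Your Caccioppoli inequality, obtained by testing against $\eta^2 u$, bounds $\aver{B}|\nabla u|^2$ by $r^{-2}\aver{2B}|u|^2$. To chain this with Sobolev--Poincar\'e and arrive at a reverse H\"older estimate for $|\nabla u|$, you would need $r^{-2}\aver{2B}|u-\aver{2B}u|^2$ on the right-hand side. But when $V\not\equiv 0$ the function $u-c$ is \emph{not} $L$-harmonic, since $L(u-c)=Lu - Vc = -Vc$; so you cannot rerun Caccioppoli for $u-c$, and the chain Caccioppoli $\to$ Sobolev--Poincar\'e $\to$ Gehring breaks at the first link. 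The paper's Proposition~\ref{prop:P2RH} instead tests directly with $\chi_B^2(f-c)$ and uses the Leibniz rule for the carr\'e du champ; adapting this to $L=-\Delta+V$ with $\Gamma=|\nabla|$ still requires care, because the Schr\"odinger form is not strongly local and one must control the extra term $\int V\chi_B^2 f(f-c)$ using $V\geq 0$. The concerns you list (metric Gehring, uniformity of constants in $V$) are comparatively routine; the real subtlety is that $L$-harmonicity is not preserved under subtraction of constants.
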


It has to be compared with \cite{CMO} where a negative answer for $(G_p)$ (and so $(R_p)$) is given for $p>\nu$ if the operator $L$ has a positive ground state function.

\medskip

\item {\bf Second order divergence form operators.}

Consider $(M,d,\mu)$ a doubling Riemannian manifold (satisfying \eqref{dnu}), equipped with the Riemannian gradient $\nabla$ and its divergence operator $\dive = \nabla^*$. To $A = A(x)$ a complex matrix-valued function, defined on $M$ and satisfying the ellipticity (or accretivity) condition (see Section \ref{sec:appli} for details), we may define a second order divergence form operator
$$ L=L_A f :=- \dive (A\nabla f).$$
Then $L$ is sectorial and satisfies the conservation property but may not be self-adjoint. 

In this case, the reverse H\"older property $(RH_p)$ for some $p>2$ is implied by $(P_2)$, see \cite[Chapter 5, Thm 2.1]{Gia0}. We refer the reader to Section \ref{sec:appli} where a stability result (with respect to the map $A$) is shown for the reverse H\"older property.

\medskip

\item {\bf Examples where $(G_p)$ is known.}

We shall give here two examples of situations where $(G_p)$ is known for some $p>2$. However $(R_p)$ is still unknown, which motivates to ask Question A.

\begin{enumerate}
\item Consider $V$ a non-negative potential on ${\mathbb R}$ and define on ${\mathbb R}^2$ (equipped with its Euclidean structure), the operator for $x=(x_1,x_2) \in{\mathbb R}^2$
$$ Lf(x) = -\Delta f(x) +V(x_2)f(x).$$ 
Then $L$ generates a semigroup, which has a heat kernel satisfying Gaussian bounds. For $\Gamma=\partial_{x_1}$, since $V$ is very nice along the first coordinate, it is natural to ask for the boundedness of $\Gamma L^{-1/2}$. Such a boundedness is unknown and does not seem to be trivial, since $L^{-1/2}$ makes interact the action of $L$ on the two coordinates. However, it is surprising to observe that $(G_p)$ easily holds for every $p\in[1,\infty]$. Indeed, since $L_1:=-\partial_{x_1}^2$ and $L_2:=-\partial_{x_2}^2+V(x_2)$ commute, we then have $ e^{-tL} = e^{-tL_1} \otimes e^{-tL_2}$ such that
$$ \|\sqrt{t} \Gamma e^{-tL}\|_{p\to p} \leq \|\sqrt{t}\partial_{x_1} e^{-tL_1}\|_{p\to p} \|e^{-tL_2}\|_{p\to p}$$
which is uniformly (with respect to $t$) bounded. 

\item On ${\mathbb R}^n$, consider a second order operator 
$$ L = \sum_{|\alpha|\leq 2} c_\alpha(x) \partial^\alpha $$
given by bounded measurable complex coefficients $c_\alpha$, depending on $x$.
For some $q>2$, assume that the $L^q$ domain of $-L$ is the classical Sobolev space $D_q(-L)=W^{2,q}({\mathbb R}^n)$ and also is the generator of an analytic semigroup $(e^{-tL})_{t>0}$ in $L^q$.
Then \cite[Theorem 3.1]{Ku} shows that a local version of $(G_q)$ holds (with $\Gamma=\nabla$). In this context, $L^q$-boundedness of some local Riesz transforms is unknown. 
\end{enumerate}

\end{itemize}

\subsection{Preliminaries}

We start by recalling some technical results, which will be needed later.

We denote by $\calM$ the Hardy-Littlewood maximal operator, defined for $f \in L^1_{\loc}(X,\mu)$ and $x \in X$ by
\begin{equation}	\label{def:maxop}
	\calM f(x):= \sup_{B \ni x} \,\aver{B} |f| \,d\mu,
\end{equation} 
where the supremum is taken over all balls $B \subset X$ with $x \in B$. 
For $p \in [1,+\infty)$, we abbreviate by $\calM_p$ the operator defined by $\calM_p(f):=[\calM(|f|^p)]^{1/p}$, $f \in L^1_{\loc}(X,\mu)$. Note that 
$\calM$ is bounded in $L^q(X,\mu)$ for all $q \in (1,+\infty]$, cf. \cite[Chapitre III]{CW}. Consequently, $\calM_p$ is bounded in $L^q(X,\mu)$ for all $q \in (p,+\infty]$.

Let us recall Gehring's result \cite{Gehring} which describes a self-improvement of inequalities involving maximal functions. 

\begin{theorem} \label{thm:gehring} Let $(X,d,\mu)$ be a doubling space. Let $1\leq p<q<\infty$ be two exponents and $f \in  L^q_{\loc}(X,\mu)$ such that for some constant $C>0$, $1<\alpha<\beta<2$ and a ball $B$  we have for almost every $x\in 2B$
$$  \sup_{x\in Q \subset 2 Q \subset \alpha B} \ \left(\aver{Q} |f|^q \,d\mu\right)^{1/q} \leq C \sup_{x\in Q \subset \beta B} \ \left(\aver{Q} |f|^p \,d\mu\right)^{1/p}.$$
Then there exists $\epsilon:=\epsilon(C,p,q,\alpha,\beta,\nu)>0$ such that $f\in L^{q+\epsilon}(B)$ and
$$ \left(\aver{B} |f|^{q+\epsilon} \, d\mu\right)^{1/(q+\epsilon)} \lesssim \left(\aver{2B} |f|^{p} \, d\mu\right)^{1/p}.$$
\end{theorem}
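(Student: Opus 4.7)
The plan is to adapt Gehring's classical self-improving argument for reverse H\"older inequalities to this localised, maximal-function formulation, via a good-$\lambda$ inequality built from a Calder\'on--Zygmund stopping-time decomposition inside $\beta B$. First, reduce to the case $p=1$ by setting $g:=|f|^p\geq 0$ and $\tilde q:=q/p>1$; the hypothesis becomes $M^\sharp g\leq C^p M^\flat g$ a.e. on $2B$, where
\[
 M^\sharp g(x):=\sup_{\substack{x\in Q\\ 2Q\subset\alpha B}}\Bigl(\aver{Q}g^{\tilde q}\,d\mu\Bigr)^{1/\tilde q},\qquad M^\flat g(x):=\sup_{\substack{x\in Q\\ Q\subset\beta B}}\aver{Q}g\,d\mu,
\]
and it suffices to prove $g\in L^{\tilde q+\eta}(B)$ for some $\eta>0$ with a bound in terms of $\aver{2B}g\,d\mu$, which gives the conclusion with $\epsilon:=p\eta$.

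Next, fix $\lambda_0:=K_0\aver{2B}g\,d\mu$ with $K_0=K_0(\nu,\alpha,\beta)$ large. For $\lambda>\lambda_0$, I decompose the level set $E_\lambda:=\{x\in 2B:M^\flat g(x)>\lambda\}$: for each $x\in E_\lambda$ select a maximal ball $Q_x\subset\beta B$ with $\aver{Q_x}g\,d\mu>\lambda$, and use Vitali's $5r$-covering lemma to extract a pairwise disjoint subfamily $\{Q_i\}$ with $E_\lambda\cap B\subset\bigcup_i 5Q_i$ and centres in $E_\lambda$. Maximality together with \eqref{dnu} yields
\[
 \lambda<\aver{Q_i}g\,d\mu\leq K\lambda,\qquad K=K(\nu).
\]
The crucial point --- and the main technical obstacle --- is that taking $K_0$ sufficiently large forces each $Q_i$ to be so much smaller than $\beta B$ that $2Q_i\subset\alpha B$; here the strict inequality $\alpha<\beta$ enters quantitatively, and it is precisely this that permits the hypothesis on the $\tilde q$-averages to be invoked on each $Q_i$.

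Applying the hypothesis at any Lebesgue point of $Q_i$ and combining with the upper stopping bound gives $\int_{Q_i}g^{\tilde q}\,d\mu\leq (C^pK\lambda)^{\tilde q-1}\int_{Q_i}g\,d\mu$. Splitting $g=g\mathbf{1}_{\{g\leq\sigma\lambda\}}+g\mathbf{1}_{\{g>\sigma\lambda\}}$ for a small $\sigma\in(0,1)$, summing over the disjoint family, and using doubling to pass from $Q_i$ to $5Q_i$, one arrives at the good-$\lambda$ inequality
\[
 \int_{E_\lambda\cap B}g^{\tilde q}\,d\mu\leq A\sigma^{\tilde q-1}\lambda^{\tilde q-1}\!\int_{E_\lambda\cap\beta B}g\,d\mu+A\lambda^{\tilde q-1}\!\int_{\{g>\sigma\lambda\}\cap\beta B}g\,d\mu,
\]
with $A=A(C,p,q,\nu,\alpha,\beta)$.

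Finally, multiply by $\eta\lambda^{\eta-1}$ and integrate over $\lambda\in(\lambda_0,\infty)$. Fubini turns both sides into weighted $L^{\tilde q+\eta}(\beta B)$ quantities (the left side being $\int_{B\cap\{M^\flat g>\lambda_0\}}g^{\tilde q+\eta}\,d\mu$ up to a constant, with sub-threshold tail $\lesssim\lambda_0^{\tilde q+\eta}|2B|\lesssim(\aver{2B}g\,d\mu)^{\tilde q+\eta}|2B|$); Gehring's classical absorption --- choose $\sigma$ small first, then $\eta>0$ small depending on $C,p,q,\nu,\alpha,\beta$ so that the first right-hand term has coefficient strictly less than $\tfrac12$ of the implicit constant on the left --- allows the dangerous term to be absorbed. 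Rearranging and using $\beta B\subset 2B$ delivers
\[
 \Bigl(\aver{B}g^{\tilde q+\eta}\,d\mu\Bigr)^{1/(\tilde q+\eta)}\lesssim\aver{2B}g\,d\mu,
\]
which in terms of $f$ is exactly the statement of the theorem, with $\epsilon:=p\eta=\epsilon(C,p,q,\alpha,\beta,\nu)>0$.
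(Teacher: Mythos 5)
Your overall strategy is the right one: the paper gives no proof of this theorem (it defers to Gehring, Giaquinta--Modica and Iwaniec, noting only that the proof ``relies on a suitable Calder\'on--Zygmund decomposition''), and your reduction to $p=1$, the stopping-time selection with $\lambda<\aver{Q_i}g\,d\mu\leq K\lambda$, the use of the threshold $\lambda_0=K_0\aver{2B}g\,d\mu$ to force $2Q_i\subset\alpha B$, and the good-$\lambda$ plus $\int\eta\lambda^{\eta-1}\,d\lambda$ endgame is exactly that classical scheme. (Minor point: to cover $E_\lambda\cap B$ you should select the stopping balls only from $\{Q_x:x\in E_\lambda\cap B\}$; a ball attached to a point near $\partial(\beta B)$ can never satisfy $2Q\subset\alpha B$, however small it is.) The first genuine gap is the central estimate $\int_{Q_i}g^{\tilde q}\,d\mu\leq(C^pK\lambda)^{\tilde q-1}\int_{Q_i}g\,d\mu$: the hypothesis gives $(\aver{Q_i}g^{\tilde q}\,d\mu)^{1/\tilde q}\leq C^p\,M^\flat g(x)$ for a.e.\ $x\in Q_i$, hence $\leq C^p\operatorname{ess\,inf}_{Q_i}M^\flat g$, and the ``upper stopping bound'' $\aver{Q_i}g\,d\mu\leq K\lambda$ controls the average of $g$, not of $M^\flat g$; a priori $M^\flat g$ could exceed $K\lambda$ everywhere on $Q_i$. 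You must exhibit a positive-measure subset of $Q_i$ on which $M^\flat g\lesssim\lambda$. This is where the real work of the maximal-function formulation lies: write $M^\flat g\leq\max\bigl(M^\flat(g\mathbf{1}_{3Q_i}),\,\sup\{\aver{Q'}g\,d\mu:\ Q'\cap Q_i\neq\emptyset,\ r(Q')\geq r(Q_i)\}\bigr)$, control the large-ball part by $K\lambda$ using the maximality of $Q_i$ together with doubling (and $\lambda>\lambda_0$ for the balls not contained in $\beta B$), and control the local part on most of $Q_i$ via the weak $(1,1)$ maximal inequality, since $\mu(\{M^\flat(g\mathbf{1}_{3Q_i})>K'\lambda\})\leq\tfrac{c}{K'\lambda}\int_{3Q_i}g\,d\mu<|Q_i|$ for $K'$ large. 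Without this ingredient the step is unjustified.

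The second gap is the absorption. After multiplying by $\eta\lambda^{\eta-1}$ and integrating, the left-hand side of your good-$\lambda$ inequality becomes (essentially) $\int_Bg^{\tilde q}(M^\flat g)^{\eta}\,d\mu$, while the term you intend to absorb becomes $c\,\eta\,\sigma\int_{\beta B}g\,(M^\flat g)^{\tilde q+\eta-1}\,d\mu$: it lives on the larger ball $\beta B$ rather than $B$, and it involves $(M^\flat g)^{\tilde q+\eta-1}$ rather than $g^{\tilde q+\eta-1}$, so it cannot be subtracted from the left-hand side as written. The standard fixes are either to run the level-set inequality for $g$ itself (using that $\{g>\lambda\}\subset E_\lambda$ up to null sets, so both sides carry $\int_{\{g>\lambda\}}g^{\,\cdot}$), or to prove the higher integrability for $M^\flat g$ and transfer it back to $g$; in either case the mismatch of domains $B$ versus $\beta B$ is handled by the iteration-over-radii (hole-filling) lemma applied to a continuum of radii in $(1,\beta)$, and one needs a truncation $g_N=\min(g,N)$ (or integration over $\lambda\in(\lambda_0,T)$ with $T\to\infty$) to secure the a priori finiteness that legitimises the subtraction. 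These are standard steps, but they are precisely the parts of Gehring's argument that make it close, and the sketch currently asserts rather than performs them.
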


The original result is due to Gehring \cite[Lemma 2]{Gehring}, with a modification in \cite[Appendix]{GM} which emphasises the restriction to sub-balls $Q$ instead of global maximal functions. There is a large amount of literature on the topic, with extensions to various settings (\cite{Gia0, Iw} and also \cite{MM}). The proof relies on a suitable Calder\'on-Zygmund decomposition.

Let us also recall the following equivalence between weak and strong Poincar\'e inequalities (combining \cite[Theorem 3.1]{HaKo} and \cite{KZ}).
\begin{theorem} \label{thm:wpoincare} Let $(X,\mu,\Gamma,L)$  be as in Section \ref{sec11}. Let $p\in(1,\infty)$ and let $f\in {\mathcal F}$. The (strong) $L^p$ Poincar\'e inequality $(P_p)$ for $f$ is equivalent to the weak version: there exists $\lambda>1$ such that
\begin{equation}\tag{$w$-$P_p$}
 \left( \aver{B} | f - \aver{B} f d\mu |^p d\mu \right)^{1/p} \lesssim r \left(\aver{\lambda B} |\Gamma f|^p d\mu \right)^{1/p}, 
 \label{w-Pp}
\end{equation}
where $B$ ranges over balls in $X$ of radius $r$.
\end{theorem}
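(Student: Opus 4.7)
The implication $(P_p) \Rightarrow (w\text{-}P_p)$ is immediate upon taking $\lambda = 1$, so the substance of the theorem lies entirely in the converse. I would establish this in two stages, following the strategy of Hajlasz--Koskela \cite{HaKo} and Keith--Zhong \cite{KZ}: first produce a strong Poincar\'e inequality on a slightly dilated ball via a chaining argument, then use a self-improvement of the exponent to absorb the dilation.

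For the first stage, fix a ball $B = B(x_0, r)$ and invoke a Boman-type (or Whitney-type) chain argument. For a.e. Lebesgue point $x \in B$, construct a sequence of balls $\{B_k\}_{k \geq 0}$ with $x \in B_k \subseteq B$, $B_0$ comparable to $B$, $r(B_k) \simeq 2^{-k} r$, and $|B_k| \simeq |B_{k+1}|$ by doubling. Telescoping $f(x) - f_B = \sum_{k \geq 0}(f_{B_{k+1}} - f_{B_k})$ and applying $(w\text{-}P_p)$ on each $B_k$ yields
$$|f(x) - f_B| \lesssim \sum_{k \geq 0} r(B_k) \left( \aver{\lambda B_k} |\Gamma f|^p \, d\mu \right)^{1/p}.$$
A standard Boman or John covering argument, using that $\lambda B_k \subseteq \lambda' B$ for some $\lambda' = \lambda'(\lambda,\nu)$, then repackages the right-hand side as an $L^p$-integral over $\lambda' B$, producing a strong Poincar\'e inequality
$$\left( \aver{B} |f - f_B|^p \, d\mu \right)^{1/p} \lesssim r \left( \aver{\lambda' B} |\Gamma f|^p \, d\mu \right)^{1/p}.$$
This is essentially the content of \cite[Theorem 3.1]{HaKo}.

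The second stage removes the dilation $\lambda' \to 1$. Here I would invoke the Keith--Zhong self-improvement \cite{KZ}: in a complete doubling metric measure space, $(w\text{-}P_p)$ self-improves to $(w\text{-}P_{p-\eta})$ for some $\eta = \eta(p,\lambda,\nu) > 0$. Applying the first-stage argument at the lower exponent $p - \eta$ gives a strong Poincar\'e inequality with $|\Gamma f|^{p-\eta}$ on the right-hand side over the enlarged ball $\lambda' B$; H\"older's inequality (exploiting the gap $p > p - \eta$) together with a standard covering over concentric balls of comparable radii then trades the ball enlargement for the gap in the exponent, ultimately producing the strong $(P_p)$ inequality on $B$ itself.

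The main technical obstacle is the Keith--Zhong self-improvement in the second stage, whose proof is a genuinely deep result relying on a delicate Calder\'on--Zygmund-type decomposition based on level sets of $\calM(|\Gamma f|^{p-\eta})$ and on the geometric structure of doubling spaces; I would simply use it as a black box, as the authors do.
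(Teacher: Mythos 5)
There is a genuine gap in your second stage, and your first stage is essentially vacuous. The conclusion you assign to Stage 1, namely $\left(\aver{B}|f-f_B|^p\,d\mu\right)^{1/p}\lesssim r\left(\aver{\lambda' B}|\Gamma f|^p\,d\mu\right)^{1/p}$, is literally the hypothesis ($w$-$P_p$) with a possibly larger dilation constant, so nothing has been gained; and this is not what \cite[Theorem 3.1]{HaKo} provides --- the content of that theorem is a \emph{Sobolev--Poincar\'e} improvement, i.e.\ it raises the exponent on the \emph{left}-hand side to some $q>p$ while keeping $|\Gamma f|^p$ on the right. (Note also that your pointwise chaining bound only gives $|f(x)-f_B|\lesssim r\,\calM(|\Gamma f|^p\chi_{\lambda' B})(x)^{1/p}$, and passing from this to an $L^p$ bound with the \emph{same} exponent $p$ on both sides is not free, since $\calM$ is unbounded on $L^1$; this is exactly why the truncation argument and the Sobolev exponent appear in \cite{HaKo}.)

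The real problem is Stage 2. The mechanism that trades the dilation $\lambda' B\to B$ for an exponent gap is a covering of $B$ by balls of radius $\epsilon r$ combined with the elementary inequality $\sum_i a_i^{q/s}\le\bigl(\sum_i a_i\bigr)^{q/s}$, and it requires the \emph{left}-hand exponent $q$ to be \emph{strictly larger} than the right-hand exponent $s$. Your Stage 2 produces a $(p-\eta,p-\eta)$ inequality on dilated balls: both exponents have been lowered together, so no gap of the correct sign has been created, and H\"older cannot repair this, because $L^{p-\eta}$ control of $f-f_B$ does not imply the $L^{p}$ control that $(P_p)$ demands (Jensen goes the other way). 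The combination that works, and the reason the paper cites both references, is: apply Keith--Zhong to pass to ($w$-$P_{p-\eta}$), then apply the Sobolev--Poincar\'e improvement of \cite[Theorem 3.1]{HaKo} (Lemma \ref{met} in the paper) at the exponent $p-\eta$ so as to reach a left-hand exponent $q\ge p>p-\eta$; with this gap the covering argument absorbs the dilation and yields $\left(\aver{B}|f-f_B|^{p}\,d\mu\right)^{1/p}\lesssim r\left(\aver{B}|\Gamma f|^{p-\eta}\,d\mu\right)^{1/(p-\eta)}$, and only now does H\"older --- applied on the right, where it points the correct way --- give $(P_p)$. (Alternatively one can dispense with Keith--Zhong and raise the left exponent from $p$ to some $q>p$ directly.) Either way, the indispensable step your proposal omits is the improvement of the left-hand exponent.
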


We also recall the following (well-known) fact about Poincar\'e inequality (see for instance \cite[Theorem 5.1]{HaKo}, \cite[Theorem 2.7]{FPW} for similar statements).

\begin{lemma}\label{met} Let $(X,d,\mu)$  be  a  doubling metric  measure space with \eqref{dnu}.  Assume  that
$(P_p)$ holds for some $1\le p<+\infty$. Then if $q\in (p,+\infty)$ is such that  $\nu\left(\frac{1}{p}-\frac{1}{q}\right)\le 1$,
the following Sobolev-Poincar\'e inequality holds:
\begin{equation}\label{Ppq}\tag{$P_{p,q}$}
 \left(\aver{B_r}| f-\aver{B_r}f\,d\mu |^{q} \, d\mu \right)^{1/q}\lesssim r \left(\aver{B_r} |\Gamma f|^{p} \, d\mu \right)^{1/p},
  \end{equation}
for all $f\in\mathcal{F}$, $r>0$, and all balls $B_r$ with radius $r$.
\end{lemma}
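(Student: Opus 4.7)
The plan is to implement the classical chain-and-truncation argument of Hedberg, adapted to the doubling metric measure setting as in \cite{HaKo}. Fix a ball $B = B_r = B(x_0, r)$ and a Lebesgue point $x \in B$, and set $B_k := B(x, 2^{-k+1} r)$ for $k \geq 0$, so each $B_k$ lies in $2B$ and $B_{k+1} \subset B_k$ with comparable measures by \eqref{d}. Telescoping $f(x) - f_{B_0} = \sum_{k=0}^\infty (f_{B_{k+1}} - f_{B_k})$ and applying $(P_p)$ on each $B_k$ (together with an analogous estimate for $|f_{B_0} - f_B|$) yields the pointwise chain bound
\begin{equation*}
 |f(x) - f_B| \lesssim r \sum_{k=0}^\infty 2^{-k} a_k(x), \qquad a_k(x) := \left(\aver{B_k} |\Gamma f|^p d\mu\right)^{1/p}.
\end{equation*}

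Hedberg's truncation is then applied with the two bounds $a_k(x) \leq M(x)$ and $a_k(x) \lesssim 2^{k\nu/p} A$, where $M(x) := [\calM(|\Gamma f|^p \chi_{2B})(x)]^{1/p}$ and $A := (\aver{2B} |\Gamma f|^p d\mu)^{1/p}$; the second bound uses \eqref{dnu}. In the nontrivial regime $p < \nu$, splitting the series at an integer $N$ with $2^N \sim (M(x)/A)^{p/\nu}$ optimizes to
\begin{equation*}
 |f(x) - f_B| \lesssim r \, A^{p/\nu}\, M(x)^{1 - p/\nu};
\end{equation*}
the case $p \geq \nu$ is easier: the geometric sum converges and gives directly $\|f - f_B\|_{L^\infty(B)} \lesssim r A$, which subsumes $(P_{p,q})$ for every finite $q$.

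Raising to the $q$-th power, integrating over $B$, and setting $\alpha := q(1 - p/\nu) \leq p$ (the reformulation of the hypothesis $\nu(1/p - 1/q) \leq 1$), we obtain $\int_B |f - f_B|^q d\mu \lesssim r^q A^{qp/\nu} \int_B M^\alpha d\mu$. When $\alpha < p$, Kolmogorov's inequality combined with the weak $(1,1)$ bound for $\calM$ gives $\int_B M^\alpha d\mu \lesssim \mu(B)^{1-\alpha/p} \|\Gamma f\|_{L^p(2B)}^\alpha$; after algebraic simplification (using $qp/\nu + \alpha = q$) this produces the weak-form Sobolev-Poincaré inequality
\begin{equation*}
 \left(\aver{B} |f - f_B|^q d\mu\right)^{1/q} \lesssim r \left(\aver{2B} |\Gamma f|^p d\mu\right)^{1/p}.
\end{equation*}

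The endpoint $\alpha = p$ (i.e.\ $q = \nu p/(\nu - p)$) and the removal of the dilation factor $2B$ are the main technical obstacles. The endpoint is handled via the Keith-Zhong self-improvement (\cite{KZ}, implicit in Theorem~\ref{thm:wpoincare}): $(P_p)$ self-improves to $(P_{p-\varepsilon})$ for some $\varepsilon > 0$, so that $q$ becomes strictly sub-Sobolev for the smaller exponent $p - \varepsilon$. The passage from the dilated weak form to the strong form with $B$ on both sides is then carried out by a standard Whitney-chain covering argument, cf.\ \cite{HaKo}, yielding $(P_{p,q})$ as stated.
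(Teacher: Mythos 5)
Your chaining-plus-Hedberg argument correctly establishes the lemma in the strictly sub-critical regime $\nu\left(\tfrac1p-\tfrac1q\right)<1$, i.e.\ $\alpha:=q(1-p/\nu)<p$: the telescoping pointwise bound, the two estimates on $a_k(x)$, the optimisation of the splitting index, the Kolmogorov step and the exponent bookkeeping $qp/\nu+\alpha=q$ are all in order, and the removal of the dilation on the right-hand side is covered by the same mechanism as Theorem~\ref{thm:wpoincare}. This is essentially the proof contained in the references the paper points to (the paper itself states the lemma without proof, citing \cite{HaKo} and \cite{FPW}), and it is the only case actually used later in the paper, since every application takes $q$ strictly below the Sobolev conjugate.

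There is, however, a genuine gap at the endpoint $\nu\left(\tfrac1p-\tfrac1q\right)=1$, that is $q=p^*:=\nu p/(\nu-p)$, which the statement includes. Your proposed fix via Keith--Zhong goes in the wrong direction: the map $s\mapsto s^*=\nu s/(\nu-s)$ is \emph{increasing}, so after the self-improvement $(P_p)\Rightarrow(P_{p-\varepsilon})$ the new critical exponent $(p-\varepsilon)^*$ is \emph{strictly smaller} than $p^*=q$; the exponent $q$ becomes super-critical for $p-\varepsilon$, not ``strictly sub-Sobolev'', and the sub-critical machinery cannot reach it. (Had the monotonicity gone your way, H\"older on the right-hand side would indeed have concluded; it does not.) Moreover \cite{KZ} requires $p>1$, whereas the lemma allows $p=1$, where the endpoint $q=\nu/(\nu-1)$ is the most classical instance. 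The endpoint needs a genuinely different mechanism: either Maz'ya-type truncation as in \cite[Theorem 5.1]{HaKo}, which presupposes a truncation property for the pair $(f,\Gamma f)$, or the self-improving machinery of \cite{FPW} applied to the functional $a(B)=r\left(\aver{B}|\Gamma f|^p\,d\mu\right)^{1/p}$, which satisfies the relevant $D_{p^*}$ summability condition; this is precisely why the paper cites \cite[Theorem 2.7]{FPW}. A last, minor slip: for $p=\nu$ your geometric sum $\sum_k 2^{k(\nu/p-1)}$ diverges and the claimed $L^\infty$ bound is false (this is the Trudinger case); there, however, the hypothesis forces $\alpha=0<p$, and the optimisation with a logarithmic loss (absorbing the log into $A^{1-\epsilon}M^{\epsilon}$ for small $\epsilon$) still yields every finite $q$, so that case is repairable within your scheme.
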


We will use the following extrapolation result (\cite{ACDH}, \cite[Theorem 3.13]{AM}). 

\begin{proposition} \label{prop:extrapolation} 
Let $(X,d,\mu)$ be a doubling metric measure space.
Let $(A_t)_{t>0}$ be a family of linear operators, uniformly bounded in $L^2(X,\mu)$.
Let $T$ be a sublinear operator which is bounded on $L^2(X,\mu)$. Assume that for some $q \in (2,+\infty)$, every ball $B$ of radius $r>0$ and every $f\in L^2(X,\mu)$, we have
\begin{itemize}
\item $L^2$-$L^2$  estimates of $T(I-A_{r})$: 
\begin{equation} \label{eq:ass1}
 \left( \aver{B} |T(I-A_{r} )f |^{2} d\mu \right)^{1/2} \lesssim  \inf_{x\in B} \calM_2(f)(x);
\end{equation}
\item $L^2$-$L^q$  estimates of $T(A_{r})$:
\begin{equation} \label{eq:ass2} 
\left( \aver{B} |TA_{r} f |^{q} d\mu \right)^{1/q} \lesssim \inf_{x\in B} \, [\calM_2(Tf)(x) + \calM_2(f)(x)].
\end{equation}
\end{itemize}
Then, for every $p\in(2,q)$, $T$ is bounded on $L^p(X,\mu)$.
\end{proposition}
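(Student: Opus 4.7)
The plan is to follow a good-$\lambda$ / Calder\'on--Zygmund extrapolation scheme of Auscher--Martell type. The underlying idea is that at every scale $r$, the splitting $T = T(I-A_r) + TA_r$ trades off the $L^2$ bound of the first piece (controlled by $\calM_2 f$ via \eqref{eq:ass1}) against the better integrability $L^q$ of the second piece (which costs a factor of $\calM_2(Tf)$ via \eqref{eq:ass2}). The extra integrability $q>2$ will be cashed in to produce a self-improving level-set inequality.

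First I would reduce to the case $f\in L^2\cap L^p$ with compact support and $Tf\in L^p$; these a priori qualitative assumptions are harmless by density and by the $L^2$-boundedness of $T$ together with the uniform $L^2$-boundedness of $(A_t)_{t>0}$. Since $|Tf|\leq \calM_2(Tf)$ almost everywhere, it is enough to bound $\|\calM_2(Tf)\|_p$ by $\|f\|_p$. The key step is the good-$\lambda$ inequality
$$ \mu\{\calM_2(Tf)>K\lambda,\ \calM_2 f\leq \gamma\lambda\}\ \leq\ \eta\, \mu\{\calM_2(Tf)>\lambda\} $$
for all $\lambda>0$, for some large $K$ and sufficiently small $\gamma,\eta\in(0,1)$. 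One then performs a Whitney-type covering of the open set $\Omega_\lambda:=\{\calM_2(Tf)>\lambda\}$ by balls $B_i=B(x_i,r_i)$ with $r_i\simeq\dist(B_i,\Omega_\lambda^c)$, and reduces the estimate to a local version on each $B_i$.

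On a fixed Whitney ball $B_i$, there is $\bar x$ close to $B_i$ with $\calM_2(Tf)(\bar x)\leq \lambda$; if moreover the ``good'' set $\{\calM_2 f\leq\gamma\lambda\}$ meets $B_i$, pick also $x^*\in B_i$ with $\calM_2 f(x^*)\leq\gamma\lambda$. Decomposing $Tf=T(I-A_{r_i})f+TA_{r_i}f$ on a controlled enlargement $cB_i$, hypothesis \eqref{eq:ass1} gives
$$ \Big(\aver{cB_i}|T(I-A_{r_i})f|^2\,d\mu\Big)^{1/2}\ \lesssim\ \calM_2 f(x^*)\ \lesssim\ \gamma\lambda, $$
while \eqref{eq:ass2} gives
$$ \Big(\aver{cB_i}|TA_{r_i}f|^q\,d\mu\Big)^{1/q}\ \lesssim\ \calM_2(Tf)(\bar x)+\calM_2 f(\bar x)\ \lesssim\ (1+\gamma)\lambda. $$
Applied together with Chebyshev, this yields
$$ \mu\{x\in B_i:\calM_2(Tf)(x)>K\lambda\}\ \lesssim\ \Big(\tfrac{\gamma}{K}\Big)^{\!2}\mu(B_i)+\Big(\tfrac{1}{K}\Big)^{\!q}\mu(B_i). $$
Choosing first $K$ large, then $\gamma$ small, produces $\eta<1$; summing over Whitney balls and integrating against $p\lambda^{p-1}\,d\lambda$ delivers $\|\calM_2(Tf)\|_p\lesssim \|\calM_2 f\|_p\lesssim \|f\|_p$ for every $p\in(2,q)$, which is the desired $L^p$-boundedness of $T$.

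The main technical obstacle is that $\calM_2(Tf)$ is a pointwise maximal function taken over \emph{all} scales, whereas the decomposition $T(I-A_{r_i})+TA_{r_i}$ is scale-dependent: one must check that, for every ball $B\ni x\in B_i$, either $B$ is small enough to lie inside $cB_i$ (so our local bounds apply directly) or $B$ is large enough to be comparable, via doubling, to a ball centered at $\bar x$ (so the value $\calM_2(Tf)(\bar x)\leq\lambda$ controls the average of $Tf$ on $B$). A related delicacy is propagating the $L^2$-bound on $T(I-A_{r_i})f$ to a ``tail'' estimate on enlarged concentric balls so that the Whitney covering argument closes; this is the standard role played by the uniform $L^2$-boundedness of $(A_t)_t$ together with \eqref{eq:ass1}.
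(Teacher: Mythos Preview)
Your sketch is correct and follows the good-$\lambda$ extrapolation scheme of Auscher--Martell, which is exactly the approach behind this proposition. Note, however, that the paper does not supply its own proof here: Proposition~\ref{prop:extrapolation} is stated as a known result and attributed to \cite{ACDH} and \cite[Theorem~3.13]{AM}, so there is no ``paper's own proof'' to compare against beyond those references. Your outline matches the argument in those sources; the technical points you flag at the end (localising $\calM_2(Tf)$ to the Whitney scale, and controlling large balls via the nearby point $\bar x\in\Omega_\lambda^c$) are precisely the places where care is needed, and they are handled in \cite{AM} in the way you indicate.
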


Let us state a technical result, which describes how a higher order of cancellation with respect to the operator $L$ allows us to gain integrability through off-diagonal estimates.

\begin{lemma} \label{lemma} 
Let $p \in [2,p^L)$, and let $K>\frac{\nu}{2p}$.
 Then for every ball $B$ of radius $r>0$ and every function $f\in L^2(X,\mu)$, we have
 \begin{align*}
 \left(\aver{B} |(r^2L)^{K}  e^{-r^2L} f|^{p} \, d\mu \right)^{1/p}  \lesssim \sup_{Q\supset B} \left(\aver{Q} |f|^2 \, d\mu \right)^{1/2},
\end{align*}
where the supremum is taken over all balls $Q$ containing $B$.
\end{lemma}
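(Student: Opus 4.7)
The plan is to combine a dyadic annular decomposition of $f$ around $B$ with $L^2$-$L^p$ off-diagonal estimates for the operator $T_r := (r^2L)^K e^{-r^2L}$ at scale $r$. Write $C_0 := 4B$ and $C_j := 2^{j+2}B \setminus 2^{j+1}B$ for $j \geq 1$, and decompose $f = \sum_{j \geq 0} f_j$ with $f_j := f \mathbf{1}_{C_j}$, so that $T_r f = \sum_{j\geq 0} T_r f_j$.

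First, I would establish that $T_r$ satisfies $L^2$-$L^p$ off-diagonal estimates at scale $r$: for every pair of balls $B_1, B_2$ of radius $r$ and every $g$ supported in $B_1$,
$$\left(\aver{B_2} |T_r g|^p \, d\mu\right)^{1/p} \lesssim \exp\left(-c \frac{d^2(B_1,B_2)}{r^2}\right) \left(\aver{B_1} |g|^2 \, d\mu\right)^{1/2}.$$
Since $p \in [2,p^L)$, the definition of $p^L$ provides $L^2$-$L^p$ off-diagonal estimates for $e^{-tL}$ at every scale $t>0$. Factoring $T_r = [(r^2L)^K e^{-r^2L/2}] \circ e^{-r^2L/2}$, the first factor satisfies $L^2$-$L^2$ Davies-Gaffney estimates at scale $r$ (via functional calculus applied to the bounded spectral symbol $z \mapsto z^K e^{-z/2}$ on the sector containing the spectrum of $L$, combined with \eqref{eq:DG}), and composing with the $L^2$-$L^p$ off-diagonal estimate for $e^{-r^2L/2}$ yields the claim.

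Second, for each $j \geq 0$, cover $\supp f_j \subset 2^{j+2}B$ by balls $(B_{i,j})_{1\leq i \leq N_j}$ of radius $r$ with bounded overlap. By \eqref{dnu} one has $N_j \lesssim 2^{j\nu}$ and $|B_{i,j}| \gtrsim 2^{-j\nu}|B|$. The $j = 0$ contribution is handled directly by applying the $L^2$-$L^p$ bound on each ball in the cover of $4B$, giving $(\aver{B}|T_r f_0|^p\, d\mu)^{1/p} \lesssim (\aver{4B} |f|^2 \, d\mu)^{1/2}$. For $j \geq 1$, $d(B, B_{i,j}) \gtrsim 2^j r$, so applying the off-diagonal estimate to $f\mathbf{1}_{C_j \cap B_{i,j}}$ brings out an exponential factor $e^{-c 4^j}$. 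Summing over $i$ by Cauchy-Schwarz (using bounded overlap on the $L^2$ side) and comparing $|B|$ with $|2^{j+2}B|$ via doubling gives
$$\left(\aver{B} |T_r f_j|^p \, d\mu\right)^{1/p} \lesssim e^{-c4^j} \, 2^{\alpha j\nu} \left(\aver{2^{j+2}B} |f|^2 \, d\mu\right)^{1/2}$$
for some $\alpha$ depending only on the doubling constant. Since each $2^{j+2}B$ contains $B$ and the exponential beats every polynomial in $j$, summing in $j$ yields the bound $\sup_{Q \supset B} (\aver{Q} |f|^2 \, d\mu)^{1/2}$.

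The main technical obstacle is the first step: obtaining uniform-in-$r$ $L^2$-$L^p$ off-diagonal estimates for $T_r$ when $K$ is not a positive integer, which requires a careful spectral representation of the symbol $z^K e^{-z}$. The condition $K > \nu/(2p)$ enters precisely here as the threshold that ensures the polynomial $(r^2L)^K$ provides enough cancellation to absorb the volume factor $|B|^{-(1/2-1/p)}$ appearing when passing from an $L^2$ bound to an $L^p$ bound in the composition with $e^{-r^2L/2}$; once this is in place, the dyadic annular sum closes for free.
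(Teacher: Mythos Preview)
Your approach has a genuine gap in the first step. You claim that $(r^2L)^K e^{-r^2L/2}$ satisfies $L^2$-$L^2$ Davies--Gaffney (i.e.\ Gaussian) off-diagonal estimates at scale $r$, arguing that this follows from boundedness of the symbol $z \mapsto z^K e^{-z/2}$ ``combined with \eqref{eq:DG}''. This inference is not valid: boundedness of the symbol only gives $L^2$-boundedness via $H^\infty$ calculus, not Gaussian off-diagonal decay. In fact, for non-integer $K$ the operator $(r^2L)^K e^{-r^2L}$ only satisfies \emph{polynomial} off-diagonal decay of order $2K$ at scale $r$. Already for $L=-\Delta$ on $\R^n$ the kernel of $\sqrt{-\Delta}\,e^{t\Delta}$ decays like $|x-y|^{-n-1}$ rather than like a Gaussian, because the symbol $|\xi|e^{-t|\xi|^2}$ has a $|\xi|$-type singularity at the origin. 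Consequently your annular sum does \emph{not} close with the factor $e^{-c4^j}$ you wrote, but only with $2^{-2Kj}$, and the polynomial losses from doubling are no longer absorbed ``for free''. Your final paragraph asserts that the threshold $K>\nu/(2p)$ enters when absorbing the volume factor $|B|^{-(1/2-1/p)}$ in the $L^2$-$L^p$ composition, but you give no mechanism for how this would actually occur in your scheme, and the composition at fixed scale $r$ does not produce that factor.

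The paper's proof is organised differently and makes the role of the condition transparent. One writes $K=M-\rho$ with $M\in\N$, $\rho\in(0,1]$, and uses the reproducing formula
\[
(r^2L)^{K}e^{-r^2L}=c\int_0^\infty \Big(\frac{r^2}{s}\Big)^{K}(sL)^M e^{-(s+r^2)L}\,\frac{ds}{s}.
\]
For $s\le r^2$ one applies $L^2$-$L^p$ off-diagonal estimates for $(r^2L)^M e^{-(s+r^2)L}$ at the fixed scale $r$; for $s>r^2$ one works instead at the \emph{variable} scale $\sqrt{s}$, passing to the dilated ball $\tilde B=(\sqrt{s}/r)B\supset B$. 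The volume comparison $|B|^{-1/p}\lesssim (\sqrt{s}/r)^{\nu/p}|\tilde B|^{-1/p}$ then produces exactly the factor $(s/r^2)^{\nu/(2p)}$, and the resulting integral $\int_{r^2}^\infty (r^2/s)^{K-\nu/(2p)}\,\frac{ds}{s}$ converges precisely when $K>\nu/(2p)$. This variable-scale argument is what your fixed-scale annular decomposition is missing.
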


\begin{proof} 
We write $K=M-\rho$ with $M\geq 1$ an integer and $\rho\in(0,1]$. Let $B$ be a ball of radius $r$ and $f \in L^2(X,\mu)$. 
Using a Calder\'on reproducing formula, we can write 
\begin{equation}  (r^2L)^{M-\rho}  e^{-r^2L}f = c \int_0^\infty   \left(\frac{r^2}{s}\right)^{M-\rho}  (sL)^M e^{-(s+r^2)L}f  \, \frac{ds}{s}. \label{eq:decompspectral}
\end{equation}
Now recall by analyticity of the semigroup, $(tL)^M e^{-tL}$ also satisfies $L^2$-$L^{p}$ off-diagonal estimates.
This yields that for $s\leq r^2$, the operator $(r^2L)^M e^{-(s+r^2)L}$ satisfies $L^2$-$L^{p}$ off-diagonal  estimates at the scale $r$, so
\begin{align*}
 \left(\aver{B}| (r^2L)^M e^{-(s+r^2)L} f|^{ p} \, d\mu \right)^{1/ p} \lesssim \sup_{Q\supset B} \left(\aver{Q} |f|^2 \, d\mu \right)^{1/2}. 
\end{align*}
Consequently, 
\begin{align*}
& \left(\aver{B} | \int_0^{r^2}  \left(\frac{r^2}{s}\right)^{M-\rho}(sL)^M e^{-(s+r^2)L} f|^{ p} \, d\mu \right)^{1/  p} \\
&\qquad \lesssim \int_0^{r^2} \left(\frac{s}{r^2}\right)^{\rho}  \left(\aver{B} |(r^2L)^M e^{-(s+r^2)L} f|^{ p} \, d\mu \right)^{1/  p}  \, \frac{ds}{s}  \lesssim  \sup_{Q\supset B} \left(\aver{Q} |f|^2 \, d\mu \right)^{1/2}.
\end{align*}
For $s\geq r^2$ on the other hand, $(sL)^M e^{-(s+r^2)L}$ satisfies $L^2$-$L^{p}$ off-diagonal estimates at the scale $s\geq r^2$. Denoting by $\tilde B = \frac{\sqrt{s}}{r} B \supset B$ the dilated ball, we obtain in this case
\begin{align*}
 \left(\aver{B} |(sL)^M e^{-(s+r^2)L} f|^{ p} \, d\mu \right)^{1/ p}
 & \lesssim \left(\frac{\sqrt{s}}{r}\right)^{\frac{\nu}{ p}}  \left(\aver{\tilde B} |(sL)^M e^{-(s+r^2)L} f|^{ p} \, d\mu \right)^{1/ p}\\
& \lesssim \left(\frac{\sqrt{s}}{r}\right)^{\frac{\nu}{ p}} \sup_{Q\supset \tilde B} \left(\aver{Q} |f|^2 \, d\mu \right)^{1/2} \\
& \lesssim \left(\frac{\sqrt{s}}{r}\right)^{\frac{\nu}{ p}} \sup_{Q\supset B} \left(\aver{Q} |f|^2 \, d\mu \right)^ {1/2}.
\end{align*}
This gives for $M>\frac{\nu}{2p}+\rho$
$$ \int_{r^2}^{\infty} \left(\frac{r^2}{s}\right)^{M-\rho}  \left(\aver{B} |(sL)^M e^{-(s+r^2)L} f|^{ p} \, d\mu \right)^{1/ p}  \, \frac{ds}{s}  \lesssim   \sup_{Q\supset B} \left(\aver{Q} |f|^2 \, d\mu \right)^{1/2}.
$$
Putting the two parts together yields the conclusion.
\end{proof}

\subsection{Harmonic functions}

Let us first rigorously describe what we mean by harmonic functions. First note that the map 
$${\mathcal E}(f,g) :=  \left\langle Lf, g \right\rangle $$
 is a sesquilinear form defined on ${\mathcal D}(L)$. If $L$ is self-adjoint, then it can be extended on ${\mathcal D}(L^{1/2})$ and so in particular in ${\mathcal F}$.

\begin{definition} 
Let $u\in {\mathcal D}(L^{1/2})$ and $B$ be a ball. We say that $u$ is harmonic on $B$ if for every $\phi \in {\mathcal D}(L^{1/2})$ supported on $B$, one has
$$ {\mathcal E}(u, \phi)  =0.$$
\end{definition}

\begin{definition} 
\label{def:RHp} 
Suppose $p_0\in (2,\infty)$. We say that the reverse H\"older property \eqref{RHp} holds if for every ball $B$ of radius $r>0$ and every function $u\in {\mathcal F}$ which is harmonic on $2B$, one has
   \begin{equation}\tag{$RH_{p_0}$}
 \left(\aver{B} |\Gamma u |^{p_0} \, d\mu \right)^{1/p_0} \lesssim \left(\aver{2B} |\Gamma u |^{2} \, d\mu \right)^{1/2}.  \label{RHp}
\end{equation}
\end{definition}

\begin{rem} \label{rem:rh} By the self-improving of the RHS exponent of a reverse H\"older inequality (see \cite[Appendix B]{BCF1}), we deduce that a reverse H\"older property $(RH_{p_0})$ for some $p_0>2$ implies a $L^1$-$L^{p_0}$ reverse H\"older property: for every ball $B$ of radius $r>0$ and every function $u\in{\mathcal F}$ which is harmonic on $2B$, one has
$$ \left(\aver{B} |\Gamma u |^{p_0} \, d\mu \right)^{1/p_0} \lesssim \left(\aver{2B} |\Gamma u | \, d\mu \right).$$
\end{rem}

First of all, harmonic functions will play a crucial role, so let us detail the main tool which allows us to approximate a function with a harmonic function.

\begin{lemma} \label{lem:LM} Let $(X,\mu,\Gamma,L)$ be as in Section \ref{sec11} with $L$ non-negative and self-adjoint. 
Let $f\in {\mathcal D}$ and consider an open ball $B \subset X$. Then there exists $u\in {\mathcal F}$ such that $f-u\in {\mathcal F}$ is supported in the ball $B$ and $u$ is harmonic in $B$.
Moreover, we have
\begin{equation} \label{eq:grad}
\begin{split} 
 \left(\aver{B} |\Gamma(f-u)|^2 \, d\mu \right)^{1/2} & \lesssim r\left(\aver{B} |L f|^2 \, d\mu \right)^{1/2} \\
 \left(\aver{B} |\Gamma u|^2 \, d\mu \right)^{1/2} & \lesssim \left(\aver{B} |\Gamma f|^2 \, d\mu \right)^{1/2} + r\left(\aver{B} |L f|^2 \, d\mu \right)^{1/2}.  
 \end{split}
 \end{equation}
\end{lemma}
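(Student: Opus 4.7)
The plan is to solve a Dirichlet-type variational problem on $B$, using $\mathcal{E}$ as the energy form. Consider the subspace
\[
\mathcal{F}_B := \overline{\{\phi \in \mathcal{F}:\ \supp \phi \subset B\}},
\]
where the closure is taken with respect to the energy norm $\phi \mapsto \|L^{1/2}\phi\|_2$. On this space, the Faber--Krahn inequality \eqref{FKR} together with the Kato-type estimate \eqref{E2} yields
\[
\|\phi\|_2 \lesssim r\,\|\Gamma\phi\|_2 \lesssim r\,\|L^{1/2}\phi\|_2,
\]
so $\mathcal{F}_B$ sits inside $L^2(B,\mu)$ continuously. Since $L$ is non-negative and self-adjoint, the sesquilinear form $\mathcal{E}(\phi,\psi) = \langle L^{1/2}\phi, L^{1/2}\psi\rangle$ is a bounded, coercive form on $\mathcal{F}_B$ in its energy norm. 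By the Lax--Milgram theorem (equivalently, the Riesz representation theorem, since $\mathcal{E}$ is the inner product itself here), there exists a unique $v \in \mathcal{F}_B$ such that
\[
\mathcal{E}(v,\phi) = \mathcal{E}(f,\phi) \quad\text{for every } \phi \in \mathcal{F}_B.
\]
Setting $u := f - v$, the function $f-u = v$ is supported in $B$, and by construction $\mathcal{E}(u,\phi)=0$ for every admissible test function $\phi$, so $u$ is harmonic in $B$ in the sense of Definition above.

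For the quantitative bounds, the key idea is to test the variational identity against $v$ itself. Since $f \in \mathcal{D}$ and $v$ is supported in $B$, one has
\[
\mathcal{E}(v,v) = \mathcal{E}(f,v) = \langle Lf, v\rangle_{L^2(B,\mu)} \le \|Lf\|_{L^2(B)}\,\|v\|_{L^2(B)}.
\]
Applying \eqref{FKR} to $v$ (which lies in $\mathcal{F}_B$) gives $\|v\|_{L^2(B)} \lesssim r\,\|\Gamma v\|_{L^2(B)}$, while \eqref{E2} yields $\|\Gamma v\|_{L^2(B)}^2 \lesssim \mathcal{E}(v,v)$. Combining these two inequalities gives
\[
\|\Gamma v\|_{L^2(B)}^2 \lesssim r\,\|Lf\|_{L^2(B)}\,\|\Gamma v\|_{L^2(B)},
\]
and hence $\|\Gamma v\|_{L^2(B)} \lesssim r\,\|Lf\|_{L^2(B)}$. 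Dividing by $|B|^{1/2}$ produces the first estimate in \eqref{eq:grad}. The second follows from the sublinearity of $\Gamma$, which gives $|\Gamma u| \le |\Gamma f| + |\Gamma v|$ pointwise, and hence
\[
\left(\aver{B}|\Gamma u|^2\,d\mu\right)^{1/2} \lesssim \left(\aver{B}|\Gamma f|^2\,d\mu\right)^{1/2} + \left(\aver{B}|\Gamma v|^2\,d\mu\right)^{1/2}.
\]

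The main delicate point is to ensure that the element $v$ obtained by Lax--Milgram actually belongs to $\mathcal{F}$ and has its support contained in $\overline{B}$, rather than only lying in an abstract completion. This is where the locality assumption on $\Gamma$ and the standing density of $\mathcal{F}$ in $L^2$ must be used: one approximates $v$ by a sequence of admissible test functions supported in $B$ and uses the energy bound together with the Faber--Krahn control of $\|\cdot\|_2$ by $r\|\Gamma\cdot\|_2$ to transfer support and $\mathcal{F}$-membership to the limit. Once this is justified, the estimates above apply without further care.
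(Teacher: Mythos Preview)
Your proposal is correct and follows essentially the same route as the paper: both construct $v=f-u$ by Riesz representation on the space of $\mathcal{D}(L^{1/2})$-functions supported in $B$ equipped with the energy norm $\|L^{1/2}\cdot\|_2$, then test the variational identity against $v$ and combine \eqref{FKR} with \eqref{E2} to obtain the gradient bound. The only cosmetic difference is that the paper asserts directly that $\mathcal{H}=\{\phi\in\mathcal{D}(L^{1/2}):\supp\phi\subset B\}$ is a Hilbert space (without taking an explicit closure), whereas you work with a completion and flag the support/membership issue for the limit; your treatment is if anything slightly more careful on this point.
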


We follow the same scheme as in \cite[Lemma 4.6]{BCF1}, where a proof was given in the particular setting of a Dirichlet form. We explain here how can we extend it to the current more general framework.

\begin{proof}  Consider the space of functions 
$$ {\mathcal H} := \left\{ \phi \in {\mathcal D}(L^{1/2}) \subset L^2, \ \textrm{supp}(\phi) \subset B \right\} \subset {\mathcal F}.$$
Then, due to \eqref{FKR} and the local character of operator $\Gamma$, the application 
$$\phi \mapsto \|\phi\|_{{\mathcal H}}:= \|L^{1/2} \phi\|_{L^2} \gtrsim \| \Gamma \phi \|_{L^2(B)}$$
defines a norm on ${\mathcal H}$.  Consequently, ${\mathcal H}$ equipped with this norm is a Hilbert space, with the scalar product
$$ \langle \phi_1,\phi_2\rangle_{{\mathcal H}} := {\mathcal E}(\phi_1,\phi_2).$$ 
Since $f\in {\mathcal D}$, the linear form 
$$ \gamma:\phi \mapsto \langle Lf,\phi\rangle$$
is continuous on ${\mathcal H}$. Indeed, we have by \eqref{FKR}
$$ |\gamma(\phi)| =\left| \langle Lf,\phi\rangle\right| \lesssim r\left(\int_{B} |L f|^2 \, d\mu \right)^{1/2} \|\Gamma \phi\|_{L^2(B)}.$$
By the representation theorem of Riesz, there exists $v\in{\mathcal H}$ such that for every $\phi \in  {\mathcal H}$ 
$$  \int_{B} \sqrt{L} f \sqrt{L} \phi \, d\mu = {\mathcal E}(v,\phi)=\int \sqrt{L} v \sqrt{L} \phi \, d\mu.$$
We set $u:= f-v$ so that $v=f-u$ being in ${\mathcal H}$  is supported in  $B$. Moreover for every $\phi\in {\mathcal H}$, $\phi$ is supported in $B$ so the previous equality yields
$$ {\mathcal E}(u,\phi)= \int \sqrt{L} f \sqrt{L} \phi \, d\mu - \int \sqrt{L} v \sqrt{L} \phi \, d\mu =0.$$
So $u$ is harmonic in $B$.

Then observe that since $f-u$ is supported in $B$ and $u$ is harmonic on $B$ by using \eqref{E2}, we have
\begin{align*}
\aver{B} |\Gamma (f-u)|^2 \, d\mu & \simeq \frac{1}{|B|} \| \sqrt{L} (f-u)\|_2^2
\simeq \frac{1}{|B|} {\mathcal E}(f-u,f-u)  \\
&\simeq \frac{1}{|B|} {\mathcal E}(f,f-u).
\end{align*}
Since $f-u \in{\mathcal F}$ is supported on $B$, Property \eqref{FKR} yields
\begin{align*}
{\mathcal E}(f,f-u) \lesssim  r\left(\int_{B} |L f|^2 \, d\mu \right)^{1/2} \| \Gamma (f-u)\|_{L^2(B)},
\end{align*}
which gives
$$ \left(\aver{B} |\Gamma (f-u)|^2 \, d\mu\right)^{1/2}  \lesssim  r\left(\aver{B} |L f|^2 \, d\mu \right)^{1/2}$$
and so \eqref{eq:grad}.
\end{proof}

In the previous result, the last quantity in \eqref{eq:grad} may be removed if we make an extra assumption.

\begin{lemma} \label{lem:LM-ter} Let $(X,\mu,\Gamma,L)$  be as in Section \ref{sec11} with $L$ non-negative and self-adjoint. Assume in addition that for every $f\in {\mathcal D}$, every ball $B$ and every function $g\in {\mathcal F}$ supported in $B$
\begin{equation} \left|\langle Lf,g \rangle \right| \lesssim \| \Gamma f\|_{L^2(B)} \| \Gamma g\|_{L^2(B)}. \label{eq:RR2} \end{equation}
Then for every $f\in {\mathcal D}$ and every an open ball $B \subset X$, there exists $u\in {\mathcal F}$ such that $f-u\in {\mathcal F}$ is supported in the ball $B$ and $u$ is harmonic in $B$, that is, for every $\phi \in {\mathcal F}$ supported on $B$ 
$$ {\mathcal E}(u,\phi) =0.$$
Moreover, we have
\begin{equation}
\left(\aver{B} |\Gamma(f-u)|^2 \, d\mu \right)^{1/2} + \left(\aver{B} |\Gamma u|^2 \, d\mu \right)^{1/2} \lesssim \left(\aver{B} |\Gamma f|^2 \, d\mu \right)^{1/2}. \label{eq:grad-ter} \end{equation}
\end{lemma}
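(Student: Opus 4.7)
The plan is to follow the same scheme as in the proof of Lemma~\ref{lem:LM}, replacing the Faber--Krahn bound on the source term $\langle Lf, \phi\rangle$ by the sharper estimate provided by the new hypothesis \eqref{eq:RR2}. This sharpening is exactly what is needed to eliminate the $r\|Lf\|_{L^2(B)}$ term from \eqref{eq:grad} and to control everything by $\|\Gamma f\|_{L^2(B)}$ alone.

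Concretely, I introduce the space
$$\mathcal{H} := \{\phi \in \mathcal{F} : \supp(\phi) \subset B\}$$
equipped with the sesquilinear form $\langle \phi_1, \phi_2\rangle_{\mathcal{H}} := \mathcal{E}(\phi_1, \phi_2)$, where $\mathcal{E}$ is extended from $\mathcal{D}(L^{1/2})$ to $\mathcal{F}$ by density using the bilinear continuity estimate \eqref{eq:RR2}. The inequality $\mathcal{E}(\phi,\phi) \gtrsim \|\Gamma \phi\|_{L^2(B)}^2$ follows from \eqref{E2}, while the reverse $\mathcal{E}(\phi,\phi) \lesssim \|\Gamma \phi\|_{L^2(B)}^2$ is exactly \eqref{eq:RR2}; together with the Faber--Krahn inequality \eqref{FKR}, this makes $(\mathcal{H}, \mathcal{E})$ a Hilbert space whose norm is equivalent to $\|\Gamma \cdot\|_{L^2(B)}$. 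The linear form $\gamma : \phi \mapsto \langle Lf, \phi\rangle$ is continuous on $\mathcal{H}$ by \eqref{eq:RR2}, so the Riesz representation theorem yields $v \in \mathcal{H}$ with $\mathcal{E}(v, \phi) = \langle Lf, \phi\rangle$ for every $\phi \in \mathcal{H}$. Setting $u := f - v$, one has $f - u = v \in \mathcal{F}$ supported in $B$, and for every $\phi \in \mathcal{F}$ with $\supp(\phi) \subset B$,
$$\mathcal{E}(u, \phi) = \mathcal{E}(f, \phi) - \mathcal{E}(v, \phi) = \langle Lf, \phi\rangle - \langle Lf, \phi\rangle = 0,$$
so $u$ is harmonic on $B$ in the required sense.

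For the quantitative bound \eqref{eq:grad-ter}, I would test the Riesz identity against $\phi = v$ itself, which yields
$$\|\Gamma(f-u)\|_{L^2(B)}^2 \simeq \mathcal{E}(v,v) = \langle Lf, v\rangle \lesssim \|\Gamma f\|_{L^2(B)}\,\|\Gamma(f-u)\|_{L^2(B)},$$
and after dividing by $\|\Gamma(f-u)\|_{L^2(B)}$ and normalising by $|B|^{1/2}$ one obtains the bound on $\Gamma(f-u)$. The bound on $\Gamma u$ then follows immediately from the sublinearity of $\Gamma$, since $\Gamma u = \Gamma(f-v) \leq \Gamma f + \Gamma v = \Gamma f + \Gamma(f-u)$. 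The main delicate point I anticipate is the rigorous extension of $\mathcal{E}$ from $\mathcal{D}(L^{1/2})$ to $\mathcal{F}$ restricted to $B$: one must verify, via density of $\mathcal{D}$ in $\mathcal{F}$ in the $\Gamma$-norm together with \eqref{eq:RR2}, that the extended form remains bilinearly bounded by $\|\Gamma \cdot\|_{L^2(B)}$ and that $(\mathcal{H}, \mathcal{E})$ is complete. Once this functional-analytic setup is in place, the rest of the argument is a direct adaptation of the scheme of Lemma~\ref{lem:LM}.
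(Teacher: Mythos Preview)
Your proposal is correct and follows essentially the same approach as the paper: the paper simply says that the proof of Lemma~\ref{lem:LM} goes through verbatim, except that the continuity of the linear form $\gamma:\phi\mapsto\langle Lf,\phi\rangle$ on $\mathcal{H}$ is now obtained directly from \eqref{eq:RR2}, giving $|\gamma(\phi)|\lesssim \|\Gamma f\|_{L^2(B)}\|\phi\|_{\mathcal{H}}$, which is exactly what you do. Your treatment of the quantitative bound via testing against $v$ and the sublinearity of $\Gamma$ is also the same as the implicit argument in the paper.
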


We let the reader check that the exact same proof as the one of Lemma \ref{lem:LM} holds, except that now we can directly control the linear map $\gamma$ with 
\begin{align*}
 |\gamma(\phi)| & =\left| \langle Lf,\phi\rangle\right| \lesssim \left(\int_{B} |\Gamma f|^2 \, d\mu \right)^{1/2} \left(\int_{B} |\Gamma \phi|^2 \, d\mu \right)^{1/2} \\
 & \lesssim \left(\int_{B} |\Gamma f|^2 \, d\mu \right)^{1/2} \|\phi\|_{{\mathcal H}}.
\end{align*}

\begin{rem} The new assumption \eqref{eq:RR2} can be thought of as a sharp localised reverse Riesz inequality in $L^2(M,\mu)$. It is in particular satisfied for second order elliptic operators $L$ of divergence form, with $\Gamma$ being the length of the gradient.
\end{rem}

\section{Boundedness of Riesz transforms of self-adjoint operators under reverse H\"older property}

In this section, $(X,\mu,\Gamma,L)$  will be as in Section \ref{sec11}, with the additional assumption that $L$ is self-adjoint in $L^2(X, \mu)$.

\subsection{About finite speed of propagation}

We first need some technical results on specific approximation operators having finite propagation speed. We recall that for a non-negative, self-adjoint operator $L$ on $L^2(X,\mu)$, Davies-Gaffney estimates \eqref{eq:DG} for the heat semigroup are equivalent to the fact that the solution of the corresponding wave equation satisfies the finite propagation speed property.  See e.g. \cite{Sikora} and \cite[Section 3]{CS}.
The self-adjointness allows us to work with a better functional calculus than just $H^\infty$ functional calculus, namely functional calculus based on the Fourier transform. As investigated in \cite{AMcM}, this calculus interacts nicely with the finite propagation speed, and yields sharper off-diagonal estimates for operators generated by the calculus.

\begin{lemma} \label{lemma0}
For every even function $\varphi \in \mathcal{S}(\R)$ with $\supp \hat{\varphi} \subseteq [-1,1]$ and every $r>0$, the operator $\varphi(r\sqrt{L})$ propagates at distance at most $r$. That is, for all $f \in L^2(X,\mu)$ with $\supp f \subseteq E \subseteq X$, one has
$$ \supp(\varphi(r\sqrt{L})f) \subseteq \{x \in X:\,d(\{x\},E) \leq r\}. $$
Moreover we have the following $L^2$ off-diagonal estimates: for
all Borel sets $E,F \subseteq X$, we have
\begin{equation} \label{eq:L2OD-fs}
\| \varphi(r\sqrt{L}) \|_{L^2(E) \to L^2(F)} \lesssim \max \left\{1-\frac{d(E,F)}{r},0\right\},
\end{equation}
and
\begin{equation} \label{eq:L2OD-fs2}
 \|  r\Gamma \varphi(r\sqrt{L}) \|_{L^2(E) \to L^2(F)} \lesssim \max \left\{1-\frac{d(E,F)}{r},0\right\}.
\end{equation}
Assume in addition  $(G_{p_0})$ for some $p_0\in(2,\infty]$. Then for every $p\in(2,p_0)$, we have $L^2$-$L^p$ off-diagonal estimates: for every pair of balls $B_1,B_2$ of radius $r>0$, we have
$$ \|  r\Gamma \varphi(r\sqrt{L}) \|_{L^2(B_1) \to L^p(B_2)} \lesssim |B_1|^{\frac{1}{p}-\frac{1}{2}}  \max \left\{1-\frac{d(B_1,B_2)}{r},0\right\}.$$
\end{lemma}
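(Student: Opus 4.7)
The plan is to represent $\varphi(r\sqrt L)$ via Fourier inversion and exploit the equivalence, proven in \cite{Sikora}, between Davies-Gaffney estimates for a non-negative self-adjoint $L$ and the finite propagation speed of the wave operator $\cos(s\sqrt L)$. Since $\varphi$ is even with $\hat\varphi$ supported in $[-1,1]$, the spectral theorem yields
$$\varphi(r\sqrt L) = \frac{1}{2\pi}\int_{-1}^{1}\hat\varphi(t)\cos(tr\sqrt L)\,dt,$$
and each $\cos(tr\sqrt L)$ propagates at speed $|t|r \leq r$, giving the first claim. For the gradient version, I use that the sublinearity of $\Gamma$ combined with its support property forces $|\Gamma g_1 - \Gamma g_2| \leq \Gamma(g_1 - g_2)$, so if $\varphi(r\sqrt L)f$ vanishes on an open set, so does $\Gamma\varphi(r\sqrt L)f$; this settles the case $d(E,F) \geq r$ of both off-diagonal bounds.

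For $d := d(E,F) < r$, I split the Fourier integral at the threshold $|t| = d/r$: write $\varphi = \varphi_1 + \varphi_2$, where $\hat\varphi_1$ is the restriction of $\hat\varphi$ to $|t| < d/r$ and $\hat\varphi_2$ its restriction to $d/r \leq |t| \leq 1$. Then $\varphi_1(r\sqrt L)$ propagates at distance at most $d$, so $\varphi_1(r\sqrt L)f$ and, by the locality argument, $\Gamma\varphi_1(r\sqrt L)f$ both vanish on $F$. For $\varphi_2$, the spectral theorem gives
$$\|\varphi_2(r\sqrt L)\|_{2\to 2} \leq \|\varphi_2\|_\infty \leq \frac{1}{2\pi}\int_{d/r \leq |t|\leq 1}|\hat\varphi(t)|\,dt \lesssim 1-\frac{d}{r},$$
which yields \eqref{eq:L2OD-fs}. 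For the gradient, \eqref{E2} gives $\|r\Gamma\varphi_2(r\sqrt L)\|_{2\to 2} \lesssim \|rL^{1/2}\varphi_2(r\sqrt L)\|_{2\to 2} \leq \|\psi_2\|_\infty$ with $\psi_2(\xi) := \xi\varphi_2(\xi)$. Using $\xi\cos(t\xi) = \partial_t\sin(t\xi)$ and integrating by parts in $t$ (exploiting $\hat\varphi(\pm 1) = 0$ and the bound $|\hat\varphi(\pm d/r)| \lesssim 1 - d/r$ from the mean value theorem), one obtains $\|\psi_2\|_\infty \lesssim 1 - d/r$, completing the second claim.

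For the third claim, the strategy is to upgrade the $L^2$-$L^2$ bound from the second claim to $L^2$-$L^p$ using the gradient estimate $(G_{p_0})$. By composing $(G_p)$ with the $L^2$-$L^p$ off-diagonal estimates available for $e^{-tL}$ on $(p_L, p^L)$, one derives $L^2$-$L^p$ Gaussian off-diagonal estimates for $\sqrt{t}\Gamma e^{-tL}$ with the expected $|B_1|^{1/p - 1/2}$ normalization. The main obstacle is that the naive factorization $\varphi(r\sqrt L) = e^{-r^2 L}G(r\sqrt L)$ with $G(\xi) = e^{\xi^2}\varphi(\xi)$ fails since Schwartz decay is insufficient to compensate the Gaussian growth. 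I would circumvent this by a frequency decomposition $\hat\varphi = \hat\varphi_{\mathrm{low}} + \hat\varphi_{\mathrm{high}}$ on the Fourier side, handling the low-frequency piece via such a semigroup factorization (where the corresponding multiplier does become bounded) and the high-frequency piece via the wave parametrix combined with \eqref{E2}; alternatively, Lemma \ref{lemma} together with a Calder\'on reproducing formula allows writing $\varphi(r\sqrt L)$ as an integral of $(sr^2 L)^K e^{-sr^2 L}$ terms and transferring the $L^2$-$L^p$ semigroup estimates. In either route, the delicate point is preserving the linear decay $1 - d/r$ (rather than the weaker Gaussian decay naturally produced by the semigroup approach) while upgrading the integrability exponent.
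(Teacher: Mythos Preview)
Your treatment of the finite propagation speed property and of the $L^2$-$L^2$ off-diagonal estimates \eqref{eq:L2OD-fs} and \eqref{eq:L2OD-fs2} is correct and is precisely the argument the paper invokes from \cite[Lemma~4.4]{AMcM}: split $\hat\varphi$ at the threshold $|t|=d/r$, kill the inner piece by finite propagation plus locality of $\Gamma$, and bound the outer piece via the sup-norm of the resulting spectral multiplier. Your integration-by-parts computation for $\psi_2(\xi)=\xi\varphi_2(\xi)$, using $\hat\varphi(\pm1)=0$ and the mean-value bound on $\hat\varphi(\pm d/r)$, is a clean way to recover the $(1-d/r)$ factor for the gradient; the paper is terser and simply records that \eqref{eq:L2OD-fs2} follows from \eqref{eq:L2OD-fs}, $(G_2)$ and locality.

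The gap is in the $L^2$-$L^p$ estimate under $(G_{p_0})$. You correctly diagnose the obstruction---the factorisation $\varphi(r\sqrt L)=e^{-r^2L}G(r\sqrt L)$ fails because $G(\xi)=e^{\xi^2}\varphi(\xi)$ is unbounded---but neither of your proposed fixes is carried through, and you yourself flag that preserving the linear decay is ``delicate''. The paper's route avoids this difficulty altogether: from real-time $L^2$-$L^p$ off-diagonal estimates and the analyticity of the semigroup in $L^q$ one obtains \emph{complex-time} $L^2$-$L^p$ off-diagonal estimates for $(e^{-zL})_{\Re z>0}$, and then, under $(G_{p_0})$, also for $(\sqrt{z}\,\Gamma e^{-zL})_{\Re z>0}$ uniformly in a sector (see \cite[Proposition~3.16]{memoirs}). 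With these complex-time bounds one can run the Fourier inversion argument once more and read off the stated $L^2$-$L^p$ estimate directly, including the $\max\{1-d/r,0\}$ factor via the same $\varphi_1/\varphi_2$ splitting. The missing idea in your proposal is exactly this analytic extension of the gradient semigroup bounds to complex time; your real-time alternatives (frequency splitting, Calder\'on reproducing formula) are not obviously wrong, but they do not produce the result without substantially more work.
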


\begin{proof} 
The first statement on $\varphi(r\sqrt{L})$ and the estimate \eqref{eq:L2OD-fs} follow from the Fourier inversion formula and the bounded Borel functional calculus of $\sqrt{L}$, see \cite[Lemma 4.4]{AMcM}. The estimate \eqref{eq:L2OD-fs2} then follows from \eqref{eq:L2OD-fs}, the assumption $(G_2)$ and the fact that $\Gamma$ is supposed to be local. 
For the proof of $L^2$-$L^p$ off-diagonal estimates for $\varphi(r\sqrt{L})$, we use that thanks to real time off-diagonal estimates and the analyticity of $(e^{-tL})_{t>0}$ in $L^q(X,\mu)$, $q \in (p_L,p^L)$, we have complex time $L^2$-$L^p$ off-diagonal estimates for the semigroup. The $L^2$-$L^p$ estimates on $\varphi(r\sqrt{L})$ then again follow from the Fourier inversion formula. 
Now assume $(G_{p_0})$ for some $p_0\in(2,\infty]$. Under this assumption, one has complex time $L^2$-$L^p$ estimates for $(\sqrt{t}\Gamma e^{-tL})_{t>0}$, see e.g. \cite[Proposition 3.16]{memoirs} for a proof. One can once more apply the Fourier inversion formula, and obtains the last estimate of the lemma. 
\end{proof}

We may now prove the following version of Lemma \ref{lemma}.

\begin{lemma} \label{lemma-bis}
Suppose  $p^L>\nu$, and let $p \in [2,p^L)$. Let $\varphi \in \mathcal{S}(\R)$ be an even function with $\supp \hat{\varphi} \subseteq [-1,1]$.
Then for every ball $B$ of radius $r>0$ and every function $f\in L^2(X,\mu)$, we have
\begin{align*}
 	\left(\aver{B} |r\sqrt{L}\varphi(r\sqrt{L}) f|^{2} \, d\mu \right)^{1/2}  \lesssim \sup_{Q\supset B} \left(\aver{Q} |f|^2 \, d\mu \right)^{1/2},
\end{align*}
where the supremum is taken over all balls $Q$ containing $B$.
\end{lemma}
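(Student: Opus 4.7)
My plan is to set $\psi(\lambda) := \lambda\varphi(\lambda)$, so that $r\sqrt{L}\,\varphi(r\sqrt{L}) = \psi(r\sqrt{L})$. Then $\psi$ is Schwartz with $\hat{\psi} = -i\hat{\varphi}'$ still supported in $[-1,1]$. The Fourier-inversion argument of \cite[Lemma 4.4]{AMcM} underlying Lemma \ref{lemma0} depends only on this compact support (not on the parity of $\psi$); alternatively, to use Lemma \ref{lemma0} as stated, one may split $\psi$ into its even and odd parts, each Schwartz with Fourier transform in $[-1,1]$, and invoke the cosine and sine wave operators separately. Either way, $\psi(r\sqrt{L})$ propagates at distance at most $r$ and is bounded on $L^2(X,\mu)$ by the self-adjoint functional calculus with norm $\lesssim \|\psi\|_\infty$.

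Next I would localize the input via finite propagation: for any ball $B = B(x_0,r)$ and $f \in L^2(X,\mu)$, every $x \in B$ has $d(x,y) > r$ for all $y \notin 2B$, whence
$$\psi(r\sqrt{L})f\big|_B \;=\; \psi(r\sqrt{L})(f\chi_{2B})\big|_B.$$
The $L^2$-boundedness of $\psi(r\sqrt{L})$ then gives $\|\psi(r\sqrt{L})f\|_{L^2(B)} \lesssim \|f\|_{L^2(2B)}$. Dividing by $|B|^{1/2}$, using $|2B|\lesssim|B|$ from doubling, and noting that $2B$ is one of the balls $Q$ containing $B$ in the supremum, we conclude
$$\left(\aver{B}|\psi(r\sqrt{L})f|^2\,d\mu\right)^{1/2} \lesssim \left(\aver{2B}|f|^2\,d\mu\right)^{1/2} \leq \sup_{Q\supset B}\left(\aver{Q}|f|^2\,d\mu\right)^{1/2}.$$

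The only subtlety is the transfer of finite propagation to the odd function $\psi$, handled as above. I note that the hypotheses $p^L>\nu$ and $p\in[2,p^L)$ are not required for this $L^2$-$L^2$ statement; they would be essential for the stronger $L^2$-$L^p$ variant (with $|\cdot|^p$ and $1/p$ on the left-hand side), matching the structure of Lemma \ref{lemma} — in the paper's wording I suspect the exponent $2$ on the left is a typo for $p$. To obtain that variant, one would decompose $\psi(r\sqrt{L}) = (I+r^2L)^{-N}\tilde\psi(r\sqrt{L})$ with $\tilde\psi(\lambda) := (1+\lambda^2)^N\psi(\lambda)$ still Schwartz and $\hat{\tilde\psi} \subseteq [-1,1]$ (so $\tilde\psi(r\sqrt{L})$ inherits the $L^2$-$L^2$ localization just established), choose $N > \tfrac{\nu}{2}\bigl(\tfrac12-\tfrac1p\bigr)$, and combine this with the $L^2$-$L^p$ off-diagonal estimates for $e^{-tL}$ (valid for $p<p^L$) via the Laplace representation $(I+r^2L)^{-N} = c\int_0^\infty s^{N-1}e^{-s}e^{-sr^2L}\,ds$, paralleling the scale-by-scale summation in the proof of Lemma \ref{lemma}. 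The main obstacle in this upgrade would be the careful bookkeeping of dyadic scales balancing the Sobolev gain against the polynomial growth of balls of order $\nu$, which is precisely what the assumption $p^L>\nu$ makes possible.
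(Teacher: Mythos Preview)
Your argument contains a genuine gap at its central step: the operator $\psi(r\sqrt{L})$ with $\psi(\lambda)=\lambda\varphi(\lambda)$ does \emph{not} have finite propagation speed. The finite speed property established in Lemma~\ref{lemma0} (and in \cite[Lemma~4.4]{AMcM}) relies essentially on the evenness of the symbol: for even $\varphi$ one writes
\[
\varphi(r\sqrt{L})=\frac{1}{\pi}\int_0^\infty \hat\varphi(\tau)\,\cos(\tau r\sqrt{L})\,d\tau,
\]
and it is $\cos(t\sqrt{L})$ (together with $\sin(t\sqrt{L})/\sqrt{L}$) that inherits finite propagation from the wave equation, not $\sin(t\sqrt{L})$ or the half-wave group $e^{it\sqrt{L}}$. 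Your function $\psi$ is purely odd, so the ``split into even and odd parts'' is vacuous, and the odd contribution would require $\sin(\tau r\sqrt{L})$, which is \emph{not} localising. A concrete obstruction: on $\R$ with $L=-\Delta$, the Fourier multiplier of $\sin(t\sqrt{-\Delta})$ is $\sin(t|\xi|)=\operatorname{sgn}(\xi)\sin(t\xi)$, i.e.\ a compactly supported multiplier composed with the Hilbert transform, and the latter destroys any support control. Equivalently, the even extension $\tilde\psi(\lambda)=|\lambda|\varphi(\lambda)$ agrees with $\psi$ on the spectrum of $\sqrt{L}$ but is only Lipschitz at the origin, so $\hat{\tilde\psi}$ cannot be compactly supported.

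This is exactly why the hypothesis $p^L>\nu$ is \emph{not} superfluous even for the $L^2$--$L^2$ conclusion. The paper's proof factors $z\varphi(z)=z(1+z^2)^{-M}\cdot(1+z^2)^M\varphi(z)$; the second factor is even Schwartz with compact Fourier support and hence enjoys finite propagation, while the first factor gives $(r^2L)^{1/2}(I+r^2L)^{-M}$, which has only polynomial off-diagonal decay of order $N=\tfrac12+\tfrac{\nu}{2}(\tfrac12-\tfrac1p)$ in $L^2$--$L^p$. One needs $N>\tfrac{\nu}{4}$ for the dyadic sum to converge, which forces $p>\nu$ and hence $p^L>\nu$. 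In other words, your ``upgrade'' sketch via $(I+r^2L)^{-N}$ is not an optional refinement for an $L^p$ variant---it is essentially the route one must take already for the stated $L^2$ estimate, precisely because the half-power $\sqrt{L}$ lacks the locality you assumed.
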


\begin{proof}
Let $p\in(\nu,p^L)$. Write for $M \in \N$ with $M>\nu$
\begin{align*}
	z\varphi(z) = z(1+z^2)^{-M}(1+z^2)^M \varphi(z).
\end{align*}
Note that under our assumptions on $\varphi$, for every $K \in \N$ the function $z \mapsto z^{2K} \varphi(z) \in \mathcal{S}(\R)$ is  even with compact Fourier support. We can therefore apply Lemma \ref{lemma0} to $(1+z^2)^M\varphi(z)$ and get $L^2$-$L^2$ off-diagonal estimates for 
$(I+r^2L)^M\varphi(r\sqrt{L})$ of the form \eqref{eq:L2OD-fs}. On the other hand, we know that with $(I+r^2L)^{-1}$ also $(r^2L)^{1/2}(I+r^2L)^{-M}$ satisfies $L^2$-$L^p$ off-diagonal estimates of order $N:=\frac{1}{2}+\frac{\nu}{2}(\frac{1}{2}-\frac{1}{p})$, see e.g. \cite[Proposition 5.3]{AKMP}. Combining the two estimates gives $L^2$-$L^p$ off-diagonal estimates of order $N$ for $r\sqrt{L}\varphi(r\sqrt{L})$. We therefore get
\begin{align*}
	\left(\aver{B}|r\sqrt{L}\varphi(r\sqrt{L})f|^2\,d\mu\right)^{1/2}
	&\leq 
	\left(\aver{B}|r\sqrt{L}\varphi(r\sqrt{L})f|^p\,d\mu\right)^{1/p}\\
	&\lesssim \sum_{j \geq 0} 2^{-2jN} |B|^{-1/2} \left(\int_B|f|^2\,d\mu\right)^{1/2}\\
	& \lesssim \sum_{j \geq 0} 2^{-j(2N-\frac{\nu}{2})}  \left(\aver{2^jB}|f|^2\,d\mu\right)^{1/2}\\
	& \lesssim  \sup_{Q\supset B} \left(\aver{Q} |f|^2 \, d\mu \right)^{1/2},
\end{align*}
where we used $N>\frac{\nu}{4}$ (since $p>\nu$) in the last estimate. 
\end{proof}

\subsection{Poincar\'e inequalities and gradient estimates} \label{subsec:ppgp}

\begin{proposition}[$L^p$ Caccioppoli inequality] \label{prop:Caccioppoli} Let $(X,\mu,\Gamma,L)$ as in Section \ref{sec11} with $L$ self-adjoint. Assume $(G_{p})$ for some $p\in[2,+\infty]$. Then for every $q\in(p_L,p]$ and every integer $N$,
 \begin{equation}\label{cacp}
 r\left(\aver{B_r} | \Gamma f|^q d\mu \right)^{1/q} \lesssim \left(\aver{2B_r} |f|^q\, d\mu \right)^{1/q}+ \left(\aver{2B_r} | (r^2 L)^N f|^q \, d\mu \right)^{1/q}\end{equation}
for all $f\in {\mathcal D}(L^N)$ and all balls  $B_r$ of radius $r$.
\end{proposition}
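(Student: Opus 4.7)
The plan is to split $f$ via the functional calculus of $L$ into a low-frequency piece plus a remainder carrying exactly $2N$ powers of $\sqrt L$, using spectral multipliers that propagate at distance at most $r$. Self-adjointness of $L$ together with \eqref{eq:DG} yields the finite propagation speed of $\cos(tr\sqrt L)$ (Lemma \ref{lemma0}); this is precisely what will turn an \emph{a priori} global bound into the stated local bound with $2B_r$ on the right-hand side.

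\emph{Step 1 (Multipliers).} Pick an even $\phi\in\mathcal{S}(\mathbb{R})$ with $\supp\hat\phi\subseteq[-1,1]$, $\phi(0)=1$ and $\phi(z)-1=O(z^{2N})$ at the origin. Concretely, one takes $\phi(z):=P(z^2)\phi_0(z)$, starting from any even $\phi_0\in\mathcal{S}(\mathbb{R})$ with $\hat\phi_0\in C_c^\infty([-1,1])$ and $\phi_0(0)\ne 0$, and chooses the polynomial $P$ of degree $N-1$ so as to match the first $N$ coefficients (in $z^2$) of $1/\phi_0$ at the origin. Then $\psi(z):=(1-\phi(z))/z^{2N}$ is entire of exponential type $1$, bounded on $\mathbb{R}$ and decaying like $|z|^{-2N}$; its Fourier transform is an $L^2$ function supported in $[-1,1]$. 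This produces the $L^2$-identity
\[ f=\phi(r\sqrt L)\,f+r^{2N}\,\psi(r\sqrt L)\,L^Nf,\qquad f\in\Dom(L^N). \]

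\emph{Step 2 (Localisation and conclusion).} Lemma \ref{lemma0} (whose Fourier-inversion proof applies to $\psi$ as well, since $\hat\psi\in L^2$ has compact support in $[-1,1]$) gives finite propagation speed $\leq r$ for both $\phi(r\sqrt L)$ and $\psi(r\sqrt L)$. Hence on $B_r$ one may replace $f$ and $L^Nf$ by their restrictions to $2B_r$:
\[ f=\phi(r\sqrt L)(f\mathbf{1}_{2B_r})+r^{2N}\,\psi(r\sqrt L)\bigl((L^Nf)\mathbf{1}_{2B_r}\bigr)\quad\text{on }B_r. \]
Granted the uniform $L^q$-boundedness of $r\Gamma\phi(r\sqrt L)$ and $r\Gamma\psi(r\sqrt L)$ (Step~3), this yields
\[ \|r\Gamma f\|_{L^q(B_r)}\lesssim\|f\|_{L^q(2B_r)}+r^{2N}\|L^Nf\|_{L^q(2B_r)}, \]
and normalising by $|B_r|^{1/q}\simeq|2B_r|^{1/q}$ via doubling gives \eqref{cacp}.

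\emph{Step 3 (Main obstacle: uniform $L^q$-boundedness).} For $q=2$ this is immediate: $\|r\Gamma\phi(r\sqrt L)g\|_2\lesssim\|r\sqrt L\,\phi(r\sqrt L)g\|_2\leq\|z\phi(z)\|_\infty\|g\|_2$ by \eqref{E2} and the spectral theorem, and analogously for $\psi$ since $z\psi(z)$ is bounded. For general $q\in(p_L,p]$, $(G_q)$ holds in the whole range -- for $q\in[2,p]$ by interpolation of $(G_p)$ with the $L^2$ bound coming from \eqref{eq:DG}, and for $q\in(p_L,2)$ by combining $L^q$-$L^2$ off-diagonal estimates on $e^{-tL}$ with $(G_2)$ and \eqref{E2}. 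One then factorises
\[ r\Gamma\phi(r\sqrt L)=\bigl(r\Gamma(I+r^2L)^{-M}\bigr)\bigl((I+r^2L)^M\phi(r\sqrt L)\bigr) \]
for $M$ large enough that $(1+z^2)^M\phi(z)$ is bounded. The first factor is $L^q$-bounded by iterating the resolvent identity $r\Gamma(I+r^2L)^{-1}=\int_0^\infty e^{-s}\,r\Gamma e^{-sr^2L}\,ds$ with $(G_q)$, and the second is $L^q$-bounded by the bounded $H^\infty$-calculus of $L$ on $L^q$, available from the analyticity of $(e^{-tL})_{t>0}$ on $L^q$ in this range. The same treatment applies to $r\Gamma\psi(r\sqrt L)$ using $\psi(z)=O((1+z^2)^{-N})$; this functional-calculus step is the main technical point of the proof.
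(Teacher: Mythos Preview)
Your proof is correct and follows essentially the same route as the paper: split $f$ via a compactly Fourier-supported even multiplier $\phi(r\sqrt L)$, use finite propagation speed to localise to $2B_r$, and then factor $r\Gamma\phi(r\sqrt L)=r\Gamma(I+r^2L)^{-1}\cdot(I+r^2L)\phi(r\sqrt L)$, bounding the first factor via $(G_q)$ and the Laplace representation of the resolvent, and the second via a spectral multiplier result. Your explicit construction of $\phi$ with $1-\phi(z)=O(z^{2N})$ is a clean way to ensure $\psi(z)=(1-\phi(z))/z^{2N}$ has compactly supported Fourier transform for every $N$; the paper is less explicit on this point.

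One imprecision: the $L^q$-boundedness of $(I+r^2L)^M\phi(r\sqrt L)$ and $(I+r^2L)\psi(r\sqrt L)$ is not a consequence of ``bounded $H^\infty$-calculus available from analyticity'' --- analyticity of the semigroup on $L^q$ alone does not yield an $H^\infty$-calculus there. What is being used (and what the paper cites as \cite{Blunck}) is a H\"ormander-type spectral multiplier theorem for self-adjoint $L$ with Davies--Gaffney estimates, which does bound such smooth multipliers on $L^q$ for $q\in(p_L,p^L)$. With that correction, the argument is complete.
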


The result was shown in \cite[Proposition 5.4]{BCF1} in the case $N=1$. The proof for general $N \in \N$ is similar, we give a proof here for the sake of completeness.

\begin{proof}
Consider an even function $\varphi \in \mathcal{S}(\R)$ with $\supp \hat{\varphi} \subseteq [-1,1]$ and $\varphi(0)=1$. Consequently, $\varphi'(0)=0$, and $z\mapsto z^{-1}\varphi'(z) \in \mathcal{S}(\R)$ is even with Fourier support in $ [-1,1]$, cf. \cite[Lemma 6.1]{AMcM}.  Fix a ball $B$ of radius $r>0$, an exponent $q\in(1,p]$ and split
$$ f = \varphi(r\sqrt{L}) f + (I- \varphi(r\sqrt{L})) f.$$
Since $\varphi(0)=1$, one has
\begin{align*}
 (I- \varphi(r\sqrt{L})) = \int_0^r \sqrt{L}\varphi'(s\sqrt{L})  \, ds.
\end{align*}
Using the finite propagation speed  property applied to the functions $\varphi$ and $z\mapsto z^{-1+2N}\varphi'(z)$, we have that both $\varphi(r\sqrt{L})$ and $(r^2 L)^{-N}(1- \varphi(r\sqrt{L}))$ satisfy the  propagation property at a speed $1$ and so propagate at a distance at most $r$. The same stills holds by composing with the gradient. Hence, 
\begin{align}
	\|\Gamma f \|_{L^q(B)}
		& \lesssim \|\Gamma \varphi(r \sqrt{L})\|_{q\to q} \|f\|_{L^q(2B)} \nonumber  \\
		& +  \|\Gamma (1- \varphi(r\sqrt{L}))(r^2 L)^{-N} \|_{q\to q} \| (r^2L)^N f \|_{L^q(2B)}. \label{eq:cc}
\end{align}

For $q\in(2,p]$, one obtains $(G_q)$ via interpolation between $(G_2)$ and $(G_p)$. For $q\in(p_L,2)$, $(G_q)$ is a consequence of $(G_2)$. By writing the resolvent via the Laplace transform as $(1+r^2L)^{-1} = \int_0^{+\infty}  e^{-t(1+r^2L)} dt$, we deduce gradient bounds for the resolvent in $L^q$, that is
\begin{align*}
\| \Gamma (1+r^2L)^{-1} \|_{q \to q} & \lesssim \int_0^{+\infty }e^{-t} \|\Gamma e^{-tr^2L}\|_{q \to q} \,dt 
  \lesssim \int_0^{+\infty} \frac{e^{-t}}{r\sqrt{t}} \,dt  \lesssim r^{-1}. 
\end{align*}
Denote $\psi:=\varphi$ or $\psi:=x\mapsto (1-\varphi(x))/x^{2N}$, and consider $\lambda(x)=\psi(x) (1+x^2)$. Hence
$$ \psi(r \sqrt{L}) = (1+r^2L)^{-1} \lambda(r\sqrt{L}) $$
and therefore
\begin{align*} 
\| \Gamma \psi(r \sqrt{L})\|_{q\to q} & \leq \| \Gamma (1+r^2L)^{-1} \|_{q \to q} \| \lambda(r\sqrt{L}) \|_{q \to q} 
 \lesssim r^{-1} \| \lambda(r\sqrt{L}) \|_{q \to q}.
\end{align*}
Observe that $\lambda \in \mathcal{S}(\R)$ since $\varphi(0)=1$ and $\varphi'(0)=0$. 
By a functional calculus result (see e.g. \cite{Blunck}), we then have 
$$ \sup_{r>0} \ \| \lambda(r\sqrt{L}) \|_{q \to q} \lesssim 1,$$
and consequently,
$$ \| \Gamma \psi(r \sqrt{L}) \|_{q\to q} \lesssim r^{-1}.$$
Coming back to \eqref{eq:cc}, we obtain
\begin{align*}
	\| \Gamma f  \|_{L^q(B)}	& \lesssim r^{-1} \| f\|_{L^q(2B)} +  r \| L f \|_{L^q(2B)}. 
	\end{align*}
\end{proof}

We are now going to give  a more general version of the main result of \cite{AC}.
More precisely,  \cite[Theorem 0.4]{AC} states that if $X$ is a complete non-compact Riemannian manifold satisfying the doubling condition \eqref{d} and $L$ is its nonnegative Laplace-Beltrami operator,  then under the Poincar\'e inequality $(P_2)$ there exists $\eps>0$ such that $(R_p)$ holds for $p\in[2,2+\eps)$. This result relies on the self-improvement of Poincar\'e inequalities from \cite{KZ}, but also on considerations on the Hodge projector that are specific to the Riemannian setting. We give here a proof that is valid in our more general setting and also gives a $L^{p_0}$-version.

\begin{theorem}\label{ac}
 Let $(X,\mu,\Gamma,L)$ be as in Section \ref{sec11} with $L$ self-adjoint and $L$ and $\Gamma$ satisfying the conservation property. If for some $p_0\in[2,\infty)$, the combination $(P_{p_0})$ and $(G_{p_0})$ holds, then there exists $\eps>0$ such that $(G_p)$ holds for $p\in[p_0,p_0+\eps)$.
\end{theorem}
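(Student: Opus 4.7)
The proof should rest on the combination of three ingredients: Keith-Zhong's self-improvement of Poincar\'e inequalities \cite{KZ}, the $L^{p_0}$-Caccioppoli inequality (Proposition \ref{prop:Caccioppoli}), and an inhomogeneous form of Gehring's self-improvement (Theorem \ref{thm:gehring}). The overall strategy is to establish a local reverse H\"older-type inequality for $|\Gamma e^{-tL}g|$ at the exponent $p_0$, self-improve it to exponent $p_0+\eta$ for some small $\eta>0$, and transfer the improvement into the desired global bound $(G_{p_0+\eta})$.

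First I would use Keith-Zhong to pass from $(P_{p_0})$ to $(P_{p_0-\delta})$ for some small $\delta>0$, and then apply Lemma \ref{met} to deduce the Sobolev-Poincar\'e inequality $(P_{p_0-\delta,p_0})$. Next, fix $t>0$, let $f := e^{-tL}g$ for $g$ in a suitable dense class, and for a ball $B_r$ set $c := \aver{2B_r} f\,d\mu$. The conservation property of $\Gamma$ gives $\Gamma(f-c) = \Gamma f$ while the conservation property of $L$ gives $L^N(f-c) = L^N f$ for every $N \in \N$. Applying Proposition \ref{prop:Caccioppoli} with $q=p_0$ to $\tilde f := f-c$ and then the Sobolev-Poincar\'e inequality to $\tilde f$ leads to
\begin{equation*}
 \left(\aver{B_r} |\Gamma f|^{p_0}\,d\mu\right)^{1/p_0} \lesssim \left(\aver{\lambda B_r} |\Gamma f|^{p_0-\delta}\,d\mu\right)^{1/(p_0-\delta)} + r^{2N-1}\left(\aver{2B_r} |L^N f|^{p_0}\,d\mu\right)^{1/p_0}
\end{equation*}
for every ball $B_r$ of radius $r>0$. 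This is a reverse H\"older-type inequality at level $p_0$ with an inhomogeneous error term.

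An inhomogeneous Gehring-type self-improvement (Theorem \ref{thm:gehring}, adapted as in \cite{Gia0,Iw}) then produces $\eta>0$ (chosen small enough so that $p_0+\eta \in (p_L,p^L)$) upgrading the left-hand exponent to $p_0+\eta$ and replacing the inhomogeneous term by its $L^{p_0+\eta}$-average. Specializing to $r=\sqrt{t}$ and invoking analyticity of $e^{-tL}$ on $L^{p_0+\eta}$, so that $\|L^N e^{-tL}\|_{p_0+\eta\to p_0+\eta}\lesssim t^{-N}$, the error factor scales as $t^{-1/2}$, which is precisely the weight expected for $(G_{p_0+\eta})$. The hard part will be closing the argument without circularity: dominating $\|\sqrt{t}\Gamma e^{-tL}g\|_{L^{p_0+\eta}}$ by $\|\calM_{p_0-\delta}(\sqrt{t}\Gamma e^{-tL}g)\|_{L^{p_0+\eta}}$ is useless because the latter is itself controlled only by $\|\sqrt{t}\Gamma e^{-tL}g\|_{L^{p_0+\eta}}$. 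I would resolve this through a Marcinkiewicz-type interpolation between the strong $(p_0,p_0)$ bound given by $(G_{p_0})$ and a weak-type $(p_0+\eta,p_0+\eta)$ bound extracted from the Gehring-improved reverse H\"older inequality via a good-lambda argument, in the spirit of the extrapolation machinery behind Proposition \ref{prop:extrapolation}.
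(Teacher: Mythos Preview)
Your overall strategy is correct and matches the paper's: Caccioppoli plus Keith--Zhong/Sobolev--Poincar\'e gives a reverse H\"older inequality for $|\Gamma e^{-tL}g|$ at level $p_0$ with an error term, and Gehring then upgrades the exponent. Where you diverge from the paper is in how the error term is handled and how the argument is closed, and it is exactly there that you perceive a circularity that does not in fact exist.

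The paper does \emph{not} carry the inhomogeneous term through Gehring. Instead, it fixes a large ball $B_{\sqrt{t}}$, restricts to sub-balls $B_r\subset B_{\sqrt{t}}$, and uses Lemma \ref{lemma} (taking $N$ large enough) to bound the error $r^{-1}\bigl(\aver{2B_r}|(r^2L)^N e^{-tL}f|^{p_0}\bigr)^{1/p_0}$ uniformly in $r\le\sqrt t$ by the \emph{constant} $c(f,B_{\sqrt t}):=t^{-1/2}\inf_{x\in B_{\sqrt t}}\calM_2(f)(x)$. One then sets $h:=|\Gamma e^{-tL}f|+c(f,B_{\sqrt t})$ and applies the \emph{homogeneous} Gehring lemma to $h$. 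After Gehring, the $L^{p_0-\kappa}$ average of $\sqrt t\,\Gamma e^{-tL}f$ on $4B_{\sqrt t}$ is bounded directly by $\inf_{B_{\sqrt t}}\calM_2(f)$ via the $L^2$--$L^{p_0-\kappa}$ off-diagonal estimates for $\sqrt t\,\Gamma e^{-tL}$ (interpolate \eqref{eq:DG} with $(G_{p_0})$). Summing over a covering by balls of radius $\sqrt t$ and using the $L^{p}$-boundedness of $\calM_2$ for $p>2$ gives $(G_p)$.

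The circularity you worry about stems from trying to control $\bigl(\aver{2B_{\sqrt t}}|\Gamma e^{-tL}g|^{p_0-\delta}\bigr)^{1/(p_0-\delta)}$ by $\calM_{p_0-\delta}(\Gamma e^{-tL}g)$. That is indeed useless. The point is to use off-diagonal estimates to pass back to the \emph{original} function $f$ (via $\calM_2(f)$), not to $\Gamma e^{-tL}f$. Once you see this, no Marcinkiewicz or good-lambda argument is needed; the closure is a single line. Your inhomogeneous-Gehring route could also be made to work (bound $r^{2N-1}\le t^{N-1/2}$ for $r\le\sqrt t$ so the error function is $t^{N-1/2}|L^N e^{-tL}f|$, independent of $r$), but you would still finish exactly the same way, so the paper's ``add a constant'' trick is simply the cleaner bookkeeping.
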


\begin{proof}  
Let $N \in \N$ to be chosen later. Consider $f\in L^{p_0}(X,\mu)$, $t>0$ and set $g:=e^{-tL}f$.
For every ball $B_r$ with radius $r>0$,  the Caccioppoli inequality \eqref{cacp} together with \eqref{d} yields
\begin{equation*}
 \left(\aver{B_r} |\Gamma g|^{p_0} d\mu \right)^{1/p_0} \lesssim \frac{1}{r} \left(\aver{2B_r} \left|g- \aver{2B_r} g \, d\mu \right|^{p_0}\, d\mu \right)^{1/p_0} + 
  r^{-1} \left( \aver{2B_r} |(r^2L)^N g|^{p_0} d\mu \right)^{1/p_0}.\end{equation*}
We know by \cite{KZ} that   $(P_{p_0})$ implies $(P_{p_0-\kappa})$ for  $\kappa>0$ small enough which self-improves into a Sobolev-Poincar\'e inequality $(P_{p_0,p_0-\kappa})$ (see Lemma \ref{met}). Hence, for $\kappa$ small enough
$$  \left(\aver{2B_r} \left|g- \aver{2B_r} g \, d\mu \right|^{p_0}\, d\mu \right)^{1/p_0} \lesssim r \left(\aver{2B_r} |\Gamma g|^{p_0-\kappa} d\mu \right)^{1/(p_0-\kappa)} .$$
We obtain in this way
\begin{equation}
\label{gg-2}\left(\aver{B_r} |\Gamma g|^{p_0} \, d\mu \right)^{1/p_0} \lesssim \left(\aver{2B_r} |\Gamma g|^{p_0-\kappa} \, d\mu \right)^{1/(p_0-\kappa)} + r^{-1}  \left( \aver{2B_r} |(r^2 L)^N g|^{p_0} \, d\mu \right)^{1/p_0}.
\end{equation}
 Now, for a ball $B_{\sqrt{t}}$  of radius $\sqrt{t}$  and a parameter $K>\nu$  to be chosen later, consider the function
$$ h := |\Gamma g| + c(f,B_{\sqrt{t}}),$$
where 
$$ c(f,B_{\sqrt{t}}):=t^{-1/2} \inf_{x\in B_{\sqrt{t}}}  {\mathcal M}_2(f)(x)$$
is a constant. Since $g=e^{-tL}f$, for $N$ sufficiently large Lemma \ref{lemma} implies that for every ball $B_r\subset B_{\sqrt{t}}$ with radius $r$,  
$$r^{-1}  \left( \aver{2B_r} |(r^2L)^N e^{-tL} f|^{p_0} \, d\mu \right)^{1/p_0}  \lesssim  c(f,B_{\sqrt{t}}).$$
Therefore  \eqref{gg-2} yields
\begin{eqnarray}\label{dalmon-2}
 \left(\aver{B_r} |\Gamma g|^{p_0} d\mu \right)^{1/p_0}  &\lesssim& \left(\aver{2B_r} |\Gamma g|^{p_0-\kappa} d\mu \right)^{1/(p_0-\kappa)}   +  c(f,B_{\sqrt{t}}).
\end{eqnarray}
It follows that
\begin{equation} \label{eq:ouf-2}
\left(\aver{B_r} h^{p_0} \,d\mu \right)^{1/p_0} \lesssim \left(\aver{2B_r} h^{p_0-\kappa} \,d\mu \right)^{1/(p_0-\kappa)}.  \end{equation}

Now for $x\in B_{\sqrt{t}}$  consider the quantities
$$ F_{p_0}(x):=\sup_{x\in B\subset 2 B_{\sqrt{t}}} \left(\aver{B} h^{p_0} \, d\mu \right)^{1/p_0}$$
and
$$ F_{p_0-\kappa}(x):=\sup_{x\in B\subset 2B_{\sqrt{t}}} \left(\aver{B} h^{p_0-\kappa} d\mu \right)^{1/(p_0-\kappa)},$$
where the supremum is taken over all the balls $B$ containing $x$ and included into $B_{\sqrt{t}}$.
Then, we have
$$ F_{p_0}(x) \lesssim F_{p_0-\kappa}(x).$$
Indeed, for $B\subset 2B_{\sqrt{t}}$ a ball containing $x$, if $2B \subset 2B_{\sqrt{t}}$ then we may apply \eqref{eq:ouf-2}, and if $2B$ is not included in $2B_{\sqrt{t}}$ (since $x\in B \cap B_{\sqrt{t}}$) then $r(B)\simeq \sqrt{t}$ and so we directly apply off-diagonal estimates to have
$$ \left(\aver{B_r} h^{p_0} \,d\mu \right)^{1/p_0} \lesssim c(f,B).$$

We may apply Gehring's Lemma (Theorem \ref{thm:gehring}), and we have for some $\eps>0$  that
$$ \left(\aver{B_{\sqrt{t}}} |h|^p d\mu \right)^{1/p} \lesssim \left(\aver{2B_{\sqrt{t}}} |h|^{p_0} d\mu \right)^{1/p_0}$$
for every $p\in(p_0,p_0+\eps)$.
Hence, with \eqref{eq:ouf-2} applied for $r=\sqrt{t}$, it follows that
\begin{equation} \left(\aver{B_{\sqrt{t}}} |h|^p \, d\mu \right)^{1/p} \lesssim \left(\aver{4B_{\sqrt{t}}} |h|^{p_0-\kappa} \, d\mu \right)^{1/(p_0-\kappa)}. \label{eq:fin} \end{equation}
Hence, we deduce that 
$$ \left(\aver{B_{\sqrt{t}}} |\Gamma e^{-tL}f|^p \, d\mu \right)^{1/p} \lesssim \left(\aver{4 B_{\sqrt{t}}} |\Gamma e^{-tL}f|^{p_0-\kappa} \, d\mu \right)^{1/(p_0-\kappa)}+c(f,B_{\sqrt{t}}).$$
Interpolating the Davies-Gaffney estimates  \eqref{eq:DG} with $(G_{p_0})$ yields $L^{2}-L^{p_0-\kappa}$ off-diagonal estimates for $\sqrt{t} \Gamma e^{-tL}$ and so for a large enough integer $d\geq 1$
$$ \left(\aver{B_{\sqrt{t}}} |\sqrt{t} \Gamma e^{-tL}f|^p \, d\mu \right)^{1/p} \lesssim \inf_{x\in B_{\sqrt{t}}}  {\mathcal M}_2(f)(x) \lesssim \left(\aver{B_{\sqrt{t}}} {\mathcal M}_2(f)^p \, d\mu \right)^{1/p}.$$
By summing over a covering of $X$ by balls with radius $\sqrt{t}$ with the $L^p$-boundedness of the maximal function, we then deduce that $\sqrt{t} \Gamma e^{-tL}$ is bounded on $L^p$, uniformly with respect to $t>0$, which is $(G_p)$ as desired. 
\end{proof}

\begin{rem} Using the self-improvement of reverse H\"older inequality \cite[Appendix B]{BCF1}, \eqref{eq:ouf-2} yields for any $\rho\in(1,2)$
$$\left(\aver{B_r} h^{p_0} \,d\mu \right)^{1/p_0} \lesssim \left(\aver{2B_r} h^{\rho} \,d\mu \right)^{1/\rho}.$$
So a careful examination of the previous proof shows that we have in fact the following inequality: for every ball $B_r$ of radius $r>0$ and every $g\in{\mathcal D}(L^N)$,
$$ \left(\aver{B_r} |\Gamma g|^{p_0} \, d\mu \right)^{1/p_0} \lesssim \left(\aver{2B_r} |\Gamma g|^{\rho} \, d\mu \right)^{1/\rho} + r^{-1} \| (r^2 L)^N g\|_{L^\infty(4B_r)}.$$
\end{rem}

By combining Theorem \ref{thmbis} (which will be proved later in a more general setting with a sectorial operator $L$) and Theorem \ref{ac} with the fact that $(R_p)$ always implies $(G_p)$ by $L^p$ analyticity of the semigroup and the fact that the Poincar\'e inequality is weaker and weaker as $p$ increases, we deduce the following statement, which encompasses Theorems \ref{thmbis} and \ref{ac} and extends both \cite[Theorem 1.3]{ACDH} and \cite[Theorem 0.4]{AC}.

\begin{theorem} \label{thm:main2} Let $(X,\mu,\Gamma,L)$ be as in Section \ref{sec11}  with $L$ self-adjoint and $\Gamma$ and $L$ satisfying the conservation property.
If for some $p_0\in[2,\infty]$, the combination $(P_{p_0})$ with $(G_{p_0})$ holds, then there exists $p_1\in(p_0,\infty]$ such that
$$ (p_L,p_1) = \{p\in(p_L,\infty), \ (G_p) \textrm{ holds}\} = \{p\in(p_L,\infty), \ (R_p) \textrm{ holds}\}.$$
\end{theorem}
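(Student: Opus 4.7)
The plan is to let $p_1$ be the supremum of those exponents for which $(G_p)$ holds, and then to reduce the two equalities of the statement to a combination of Theorem \ref{ac}, Theorem \ref{thmbis}, and the easy implication $(R_p)\Rightarrow (G_p)$.

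More precisely, set
$$ p_1 := \sup\bigl\{ p\in(p_L,p^L)\,:\, (G_p) \text{ holds}\bigr\}.$$
Since $(G_{p_0})$ is part of the hypotheses, $p_1\geq p_0$, and Theorem \ref{ac} applied to $(P_{p_0})+(G_{p_0})$ (using the conservation property, which is assumed) yields $(G_p)$ on some interval $[p_0,p_0+\varepsilon)$, so that $p_1>p_0$. The same argument shows that $(G_{p_1})$ cannot hold when $p_1<p^L$: otherwise one would still have $(P_{p_1})$ (Poincar\'e inequalities being weaker for larger $p$, and $p_1\geq p_0$), and Theorem \ref{ac} would extend $(G_p)$ past $p_1$, contradicting the definition of the supremum. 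To see that $(G_p)$ holds on the whole of $(p_L,p_1)$ and not merely on a sub-interval, I would invoke the classical results \cite{CD,BK1,HM} recalled in the introduction for $p\in(p_L,2)$, and Riesz--Thorin interpolation between $(G_2)$ and $(G_q)$ for $q\in(p,p_1)$ arbitrarily close to $p_1$ when $p\in[2,p_1)$. This yields the first equality of the statement.

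For the second equality, the implication $(R_p)\Rightarrow(G_p)$ holds for every $p\in(p_L,p^L)$ via the factorisation $\sqrt{t}\,\Gamma e^{-tL}=(\Gamma L^{-1/2})\cdot(tL)^{1/2}e^{-tL}$, combined with the uniform boundedness of $(tL)^{1/2}e^{-tL}$ on $L^p$ (by the $L^p$-analyticity of the semigroup discussed just after the definition of $p_L,p^L$). This gives the inclusion $\{p:(R_p)\}\subseteq(p_L,p_1)$. For the reverse inclusion, exponents $p\in(p_L,2]$ are handled by the known results \cite{CD,BK1,HM}. For $p\in(2,p_1)$, I would apply Theorem \ref{thmbis}, which produces $(R_p)$ whenever $(G_q)$ and $(P_q)$ hold for some $q\in(p,p^L)$: if $p<p_0$, take $q:=p_0$, for which we have $(G_{p_0})+(P_{p_0})$ by assumption; if $p\in[p_0,p_1)$, pick any $q\in(p,p_1)$, so that $(G_q)$ holds by the definition of $p_1$ and $(P_q)$ follows from $(P_{p_0})$ by monotonicity in $p$. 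In either case Theorem \ref{thmbis} delivers $(R_p)$.

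The only non-trivial input is Theorem \ref{thmbis}, which is the main analytic work of the paper and supplies $(G_q)+(P_q)\Rightarrow(R_p)$ for $p<q$ without the strong conservation hypothesis $e^{-tL}1=1$ imposed in \cite{ACDH}. Once that theorem is available, the present statement is essentially a book-keeping argument combining interpolation, monotonicity of Poincar\'e inequalities in $p$, and the self-improvement of $(G_p)$ provided by Theorem \ref{ac}.
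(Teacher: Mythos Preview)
Your proof is correct and follows exactly the paper's approach: the paper's own argument is a single sentence listing the same ingredients (Theorem~\ref{thmbis}, Theorem~\ref{ac}, the implication $(R_p)\Rightarrow(G_p)$ by analyticity, and monotonicity of Poincar\'e in $p$), and you have merely made the book-keeping explicit. One inaccuracy in your final commentary: Theorem~\ref{thmbis} \emph{does} assume the conservation property for both $\Gamma$ and $L$; its improvement over \cite{ACDH} is the weakening of $(P_2)$ to $(P_{p_0})$, not the removal of conservation---but this does not affect your argument, since Theorem~\ref{thm:main2} assumes conservation anyway.
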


\subsection{Reverse H\"older property and Poincar\'e inequalities}

First let us recall the following result about $L^2$ Poincar\'e inequalities, proved in \cite[Theorem 6.1]{BCF1} combined with Theorem \ref{thm:wpoincare}.

\begin{theorem} \label{thm:poincare} Let $(X,\mu,\Gamma,L)$  be  as in Section \ref{sec11} with $L$ self-adjoint. Suppose that for every ball $B$ of radius $r>0$ and every function $u\in {\mathcal F}$ harmonic on $2B$, we have
$$ \left( \aver{B} | u- \aver{B} u \, d\mu |^2 \, d\mu \right)^{1/2} \lesssim r \left( \aver{B} |\Gamma u|^2\, d\mu \right)^{1/2}.$$
Then the Poincar\'e inequality $(P_2)$ holds.
\end{theorem}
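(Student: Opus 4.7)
My plan is to reduce to the weak Poincaré inequality via Theorem \ref{thm:wpoincare}, and to prove the latter by a harmonic decomposition. By density of $\mathcal{D}$ in $\mathcal{F}$, it suffices to establish the weak Poincaré inequality for $f\in\mathcal{D}$.

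Fix such an $f$ and a ball $B$ of radius $r$. Applying Lemma \ref{lem:LM} on $2B$ gives a decomposition $f = u + v$ with $u\in\mathcal{F}$ harmonic on $2B$ and $v = f-u\in\mathcal{F}$ supported in $2B$. Splitting the mean oscillation of $f$ on $B$ via the triangle inequality, I treat $u$ and $v$ separately: for $u$, the hypothesis of the theorem (applicable since $u$ is harmonic on $2B$) combined with the gradient bound \eqref{eq:grad} of Lemma \ref{lem:LM} yields a control by $r\bigl(\aver{2B}|\Gamma f|^2\,d\mu\bigr)^{1/2} + r^2\bigl(\aver{2B}|Lf|^2\,d\mu\bigr)^{1/2}$; for $v$, the trivial inequality $\|v - \aver{B}v\,d\mu\|_{L^2(B)} \lesssim \|v\|_{L^2(2B)}$ combined with the Faber-Krahn inequality \eqref{FKR} (applicable since $v$ is supported in $2B$) and the Lemma \ref{lem:LM} bound on $\|\Gamma v\|_{L^2(2B)}$ yields the analogous control. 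Summing produces the auxiliary estimate
\begin{equation*}
\Bigl(\aver{B}\bigl|f - \aver{B} f\,d\mu\bigr|^2\,d\mu\Bigr)^{\!1/2} \lesssim r\Bigl(\aver{2B}|\Gamma f|^2\,d\mu\Bigr)^{\!1/2} + r^2\Bigl(\aver{2B}|Lf|^2\,d\mu\Bigr)^{\!1/2}. \qquad (\star)
\end{equation*}

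The main obstacle is the elimination of the residual $r^2\|Lf\|_{L^2(2B)}$ term in $(\star)$. My plan is to apply $(\star)$ to $e^{-r^2L}f$ in place of $f$ and to decompose $f = e^{-r^2L}f + (I - e^{-r^2L})f$. The high-frequency part $(I - e^{-r^2L})f$ is bounded in $L^2(B)$ by $r\|\Gamma f\|_{L^2(\lambda B)}$ for some $\lambda > 1$ via the Calderón reproducing formula $I - e^{-r^2L} = \int_0^{r^2}\sqrt{L}e^{-tL}\sqrt{L}\,dt$, the Davies-Gaffney estimates \eqref{eq:DG} and the hypothesis \eqref{E2}. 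The low-frequency part $e^{-r^2L}f$, after applying $(\star)$, produces an error $\|(r^2 L)e^{-r^2L}f\|_{L^2(2B)}$, controlled via the factorisation $(r^2L)e^{-r^2L} = (r\sqrt{L}e^{-r^2L/2})^2$ and the off-diagonal bounds on $r\sqrt{L}e^{-r^2L/2}$ inherited from \eqref{eq:DG}. Summing the contributions yields the weak Poincaré inequality, which Theorem \ref{thm:wpoincare} upgrades to $(P_2)$. The principal technical difficulty will be the precise localisation in this last step, which exploits the self-adjointness of $L$ (as in the Dirichlet form setting of \cite[Theorem 6.1]{BCF1}) to convert estimates involving $\sqrt{L}$ into estimates involving $\Gamma$ on enlarged balls.
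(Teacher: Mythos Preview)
The paper does not give a proof of this theorem: it simply records that the result is ``proved in \cite[Theorem 6.1]{BCF1} combined with Theorem~\ref{thm:wpoincare}''. So there is no detailed argument here to compare yours against; your sketch is in fact considerably more explicit than what the paper provides, and your reduction to the weak form via Theorem~\ref{thm:wpoincare} together with the harmonic decomposition from Lemma~\ref{lem:LM} is exactly the natural line and almost certainly the one taken in \cite{BCF1}. The auxiliary estimate $(\star)$ is correct as written.

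Where your plan becomes imprecise is in the elimination of the $r^2\bigl(\aver{2B}|Lf|^2\bigr)^{1/2}$ term by semigroup regularisation. The difficulty you flag is real, but the semigroup is the wrong tool for it. Davies--Gaffney gives only Gaussian tails, so after regularising by $e^{-r^2L}$ the three quantities you must control --- $\bigl(\aver{2B}|\Gamma e^{-r^2L}f|^2\bigr)^{1/2}$, $\bigl(\aver{2B}|(r^2L)e^{-r^2L}f|^2\bigr)^{1/2}$, and $\bigl(\aver{B}|(I-e^{-r^2L})f|^2\bigr)^{1/2}$ --- all carry contributions from $f$ (not $\Gamma f$) on annuli $2^jB$ with only exponential, not compactly supported, decay in $j$. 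No choice of fixed $\lambda$ in the weak Poincar\'e inequality absorbs these, and without the conservation property you cannot subtract constants to convert those $f$-averages into oscillations. Your factorisation $(r^2L)e^{-r^2L}=(r\sqrt{L}e^{-r^2L/2})^2$ does not help: the inner $\sqrt{L}$ is nonlocal, and the global equivalence $\|\sqrt{L}f\|_2\simeq\|\Gamma f\|_2$ (even if you had both directions) does not localise.

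The remedy, which is how \cite{BCF1} proceeds and how the present paper handles every analogous step (see Lemma~\ref{lemma0} and Proposition~\ref{prop:Caccioppoli}), is to exploit self-adjointness through \emph{finite propagation speed}: replace $e^{-r^2L}$ by $\varphi(r\sqrt{L})$ with $\varphi\in\mathcal{S}(\R)$ even, $\varphi(0)=1$, and $\supp\hat\varphi\subset[-1,1]$. Then every operator that appears propagates at distance at most $r$, so all three quantities above are \emph{exactly} determined by $f$ on a fixed dilate of $B$. Once this perfect localisation is in place, the remaining step --- passing from $\sqrt{L}(f\chi)$ to $\Gamma f$ on a dilate of $B$ for a suitable cutoff $\chi$ --- uses the reverse inequality $\|\sqrt{L}g\|_2\lesssim\|\Gamma g\|_2$, which does not follow from the hypothesis \eqref{E2} alone but is available in the carr\'e-du-champ setting of \cite{BCF1}. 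You correctly sense that self-adjointness is what makes the localisation work; the concrete mechanism is finite speed of propagation rather than off-diagonal decay of the heat semigroup.
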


From this result we obtain

\begin{coro} \label{coro:rhp-poinc}
Let $(X,\mu,\Gamma,L)$  be as in Section \ref{sec11} with $L$ self-adjoint.  Assume \eqref{RHp} for some $p_0\in(2,\infty)$. Then  $(P_p)$ for some $p \in (2,p_0]$ implies $(P_2)$.
\end{coro}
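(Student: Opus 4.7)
The plan is to verify the hypothesis of Theorem \ref{thm:poincare}: an $L^2$-Poincar\'e inequality restricted to harmonic functions. Once that is established, $(P_2)$ follows immediately. So I fix a ball $B$ of radius $r>0$ and a function $u\in{\mathcal F}$ which is harmonic on $2B$, and aim for
\[
 \left(\aver{B}\Big|u-\aver{B}u\,d\mu\Big|^2\,d\mu\right)^{1/2} \lesssim r\left(\aver{B}|\Gamma u|^2\,d\mu\right)^{1/2}.
\]

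The idea is to interpolate $(P_p)$ between the two Jensen inequalities available to us, then close the estimate with \eqref{RHp}. First I apply $(P_p)$ to $u$ on the ball $B$, obtaining
\[
 \left(\aver{B}\Big|u-\aver{B}u\,d\mu\Big|^p\,d\mu\right)^{1/p} \lesssim r\left(\aver{B}|\Gamma u|^p\,d\mu\right)^{1/p}.
\]
Since $2\leq p\leq p_0$, Jensen's inequality lowers the exponent on the left-hand side from $p$ down to $2$, and raises the exponent on the right-hand side from $p$ up to $p_0$. Because $u$ is harmonic on $2B$, the reverse H\"older inequality \eqref{RHp} then applies and gives
\[
 \left(\aver{B}|\Gamma u|^{p_0}\,d\mu\right)^{1/p_0} \lesssim \left(\aver{2B}|\Gamma u|^2\,d\mu\right)^{1/2}.
\]
Chaining these estimates produces the \emph{weak} harmonic $L^2$-Poincar\'e inequality
\[
 \left(\aver{B}\Big|u-\aver{B}u\,d\mu\Big|^2\,d\mu\right)^{1/2} \lesssim r\left(\aver{2B}|\Gamma u|^2\,d\mu\right)^{1/2},
\]
valid for every ball $B$ and every $u\in{\mathcal F}$ harmonic on $2B$.

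The only remaining point, and the main obstacle, is to pass from this weak form (with $2B$ on the right-hand side) to the strong form (with $B$ on the right-hand side) that appears in the statement of Theorem \ref{thm:poincare}. This is exactly the situation addressed by Theorem \ref{thm:wpoincare}: since $u$ is harmonic on $2B'$ for every sub-ball $B'$ with $2B'\subset 2B$, the weak inequality above applies to $u$ on all such sub-balls, and the standard Haj\l{}asz-Koskela chaining argument upgrades this to the strong $L^2$-Poincar\'e bound for $u$ on $B$. Equivalently, and as already incorporated in the derivation of Theorem \ref{thm:poincare} from \cite[Theorem 6.1]{BCF1} combined with Theorem \ref{thm:wpoincare}, the weak harmonic $L^2$-Poincar\'e inequality is enough to invoke Theorem \ref{thm:poincare}, and we conclude that $(P_2)$ holds.
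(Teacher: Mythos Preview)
Your argument is correct and follows essentially the same route as the paper: derive the weak harmonic $L^2$-Poincar\'e inequality by combining $(P_p)$, Jensen, and \eqref{RHp}, then invoke Theorem \ref{thm:wpoincare} to pass from weak to strong and Theorem \ref{thm:poincare} to conclude $(P_2)$. The only cosmetic difference is that the paper first uses the monotonicity $(P_p)\Rightarrow(P_{p_0})$ and then applies \eqref{RHp} directly, whereas you apply $(P_p)$ as is and insert one extra Jensen step on the right-hand side; the two are equivalent.
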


\begin{proof} By $(P_p)$ which implies $(P_{p_0})$, we have for every ball $B$ of radius $r>0$ and every function $u \in{\mathcal F}$ which is harmonic on $2B$ 
$$ \left(\aver{B} \left| u -\aver{B} u \,d\mu \right|^{p_0} \, d\mu \right)^{1/p_0} \lesssim  r \left( \aver{B} |\Gamma u|^{p_0} \, d\mu \right)^{1/p_0}.$$
So we deduce with \eqref{RHp} that
\begin{align*} 
\left(\aver{B} \left| u -\aver{B} u \,d\mu \right|^{2} \, d\mu \right)^{1/2} & \leq \left(\aver{B} \left| u -\aver{B} u \,d\mu \right|^{p_0} \, d\mu \right)^{1/p_0} \lesssim  r \left( \aver{B} |\Gamma u|^{p_0} \, d\mu \right)^{1/p_0} \\
& \lesssim r \left( \aver{2B} |\Gamma u|^{2} \, d\mu \right)^{1/2}.
\end{align*}
By Theorem \ref{thm:wpoincare}, we obtain $(P_2)$  for every harmonic function. The full Poincar\'e inequality $(P_2)$ then follows by Theorem \ref{thm:poincare}.
\end{proof}

\begin{proposition} \label{prop:pprp} Let $(X,\mu,\Gamma, L)$  be as in Section \ref{sec11} with $L$ self-adjoint and $\Gamma$ and $L$ satisfying the conservation property.  Assume $(G_{p_0})$ for some $p_0\in(2,\infty)$. Then $(P_{p_0})$ yields \eqref{RHp}.
\end{proposition}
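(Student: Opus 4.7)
The approach I would take is a Caccioppoli--Poincar\'e--self-improvement scheme applied to harmonic functions. Fix a ball $B$ of radius $r>0$ and let $u\in\mathcal{F}$ be harmonic on $2B$ in the sense of Definition \ref{def:RHp}.

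The first step, which I expect to be the main obstacle, is to upgrade Proposition \ref{prop:Caccioppoli} into a Caccioppoli-type inequality tailored to harmonic functions, namely
\begin{equation*}
	r\left(\aver{B} |\Gamma u|^{p_0}\,d\mu\right)^{1/p_0} \lesssim \left(\aver{2B} \left|u - \aver{2B} u\,d\mu \right|^{p_0} d\mu\right)^{1/p_0}.
\end{equation*}
Starting from Proposition \ref{prop:Caccioppoli} with $q = p_0$ (available since $(G_{p_0})$ holds), and subtracting the constant $c := \aver{2B} u\,d\mu$ thanks to the conservation properties of both $\Gamma$ and $L$, the matter is reduced to discarding the residual term $\left(\aver{2B} |(r^2L)^N u|^{p_0}\,d\mu\right)^{1/p_0}$. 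Here harmonicity on $2B$ yields $Lu = 0$ weakly on $2B$, which one combines with the finite propagation speed property of $\sqrt{L}$ (Lemma \ref{lemma0}): in the decomposition $u = \varphi(r\sqrt{L}) u + (I-\varphi(r\sqrt{L}))u$ used in the proof of Proposition \ref{prop:Caccioppoli}, the second piece evaluated at points of $B$ only probes the values of $u$ (and $Lu$) in a tube of radius $O(r)$ around $B$, contained in $2B$, so that the bad term is effectively killed by the weak vanishing of $Lu$.

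For the second step, the Keith--Zhong self-improvement of Poincar\'e inequalities (Theorem \ref{thm:wpoincare}) upgrades $(P_{p_0})$ to $(P_{p_0-\kappa})$ for some small $\kappa>0$. Choosing $\kappa$ further so that $\nu\left(\frac{1}{p_0-\kappa}-\frac{1}{p_0}\right) \leq 1$, Lemma \ref{met} delivers the Sobolev--Poincar\'e inequality
\begin{equation*}
	\left(\aver{2B} \left|u - \aver{2B}u\,d\mu\right|^{p_0} d\mu\right)^{1/p_0} \lesssim r\left(\aver{2B} |\Gamma u|^{p_0-\kappa}\,d\mu\right)^{1/(p_0-\kappa)}.
\end{equation*}

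Composing the two inequalities yields, for every $u$ harmonic on $2B$,
\begin{equation*}
	\left(\aver{B} |\Gamma u|^{p_0}\,d\mu\right)^{1/p_0} \lesssim \left(\aver{2B} |\Gamma u|^{p_0-\kappa}\,d\mu\right)^{1/(p_0-\kappa)}.
\end{equation*}
The final step invokes the self-improving of the right-hand-side exponent of a reverse H\"older inequality (Remark \ref{rem:rh}, referring to \cite[Appendix B]{BCF1}): this lowers the right-hand-side exponent from $p_0-\kappa$ down to any value in $[1,p_0-\kappa)$, in particular to $2$, thereby producing $(RH_{p_0})$.
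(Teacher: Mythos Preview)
Your proposal is correct and follows essentially the same scheme as the paper: apply the $L^{p_0}$ Caccioppoli inequality (Proposition \ref{prop:Caccioppoli}) to a harmonic function, subtract the mean via the conservation property, eliminate the $(r^2L)^N$ term using harmonicity, then apply Keith--Zhong self-improvement together with the Sobolev--Poincar\'e inequality (Lemma \ref{met}) and finally lower the right-hand-side exponent to $2$ via \cite[Appendix B]{BCF1}. Two small points: the Keith--Zhong self-improvement is \cite{KZ}, not Theorem \ref{thm:wpoincare} (which is the weak/strong Poincar\'e equivalence); and while your finite-propagation-speed argument for killing the residual term is valid (and in fact mirrors what the paper does later in the proof of Theorem \ref{thm:rp}), for Proposition \ref{prop:pprp} it suffices to take $N=1$ in Proposition \ref{prop:Caccioppoli}, so that harmonicity on $2B$ directly gives $Lu=0$ there.
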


\begin{proof}
Let $B$ be a ball of radius $r>0$,  and $u\in {\mathcal D}$ harmonic on $2B$. Then Proposition \ref{prop:Caccioppoli} yields that $(G_{p_0})$ implies a $L^{p_0}$-Caccioppoli inequality, so using the conservation, it follows that
$$ \left( \aver{B} |\Gamma u|^{p_0} \, d\mu \right)^{1/p_0} \lesssim \frac{1}{r} \left( \aver{2B} \left| u - \left(\aver{2B} u \, d\mu \right)\right|^{p_0} \, d\mu \right)^{1/p_0}.$$
By the self-improvement of Poincar\'e inequalities (see \cite{KZ}), there exists $\epsilon>0$ such that $(P_{p_0-\epsilon})$ holds and so for a small enough $\epsilon>0$ the Sobolev-Poincar\'e inequality (Lemma \ref{met}) yields
$$ \left( \aver{B} |\Gamma u|^{p_0} \, d\mu \right)^{1/p_0} \lesssim \left( \aver{2B} |\Gamma u|^{p_0-\epsilon} \, d\mu \right)^{1/(p_0-\epsilon)}.$$
Then the exponent of the RHS of such a reverse H\"older inequality can always be improved (see \cite[Theorem B.1]{BCF1}), so that we deduce 
$$ \left( \aver{B} |\Gamma u|^{p_0} \, d\mu \right)^{1/p_0} \lesssim \left( \aver{2B} |\Gamma u|^{2} \, d\mu \right)^{1/2},$$
which concludes the proof of \eqref{RHp}.
\end{proof}

In the Riemannian setting with $L$ denoting the Laplace-Beltrami operator, it is already known that the Poincar\'e inequality $(P_2)$ implies a reverse H\"older property $(RH_{p})$ for exponents $p>2$ sufficiently close to $2$, see \cite[Proposition 2.2]{AC}. We extend  this result here (with a slightly different proof) for the sake of completeness. We are considering the setting of Dirichlet forms with a carr\'e du champ $\Gamma$ (see the first example of Subsection \ref{subsec:ex}), see also \cite{GSC} and \cite{BCF1} for details.

\begin{proposition} \label{prop:P2RH} Let $(X,\mu,\Gamma,L)$ be as in Section \ref{sec11}, and assume that $L$ generates a strongly local and regular Dirichlet form such that $(X,d,\mu,{\mathcal E})$ is a doubling metric  measure   Dirichlet space with a carr\'e du champ $\Gamma$ and that \eqref{FKR} and $(P_2)$ are satisfied. Then there exists $\epsilon>0$ such that $(RH_p)$ holds for every $p\in(2,2+\epsilon)$.
\end{proposition}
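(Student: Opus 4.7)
The plan is to derive, for $u$ harmonic on $2B$, a reverse-type inequality with $L^2$ on the left and an $L^{q_0}$ norm with $q_0<2$ on the right, and then apply Gehring's self-improvement lemma (Theorem~\ref{thm:gehring}) to push the left-hand exponent above $2$.

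First, I would establish a Caccioppoli inequality for harmonic functions. In the Dirichlet-form framework with carré du champ $\Gamma$, pick a Lipschitz cutoff $\eta$ with $\eta=1$ on $B$, $\supp\eta\subset 2B$ and $|\Gamma\eta|\lesssim 1/r$. For any constant $c$, the test function $\eta^{2}(u-c)$ lies in $\mathcal F$ and is supported in $2B$, so harmonicity of $u$ on $2B$ gives $\mathcal E(u,\eta^{2}(u-c))=0$. Expanding via the Leibniz rule for the carré du champ and applying Cauchy--Schwarz in the usual way yields
\begin{equation*}
	\left(\aver{B}|\Gamma u|^{2}\,d\mu\right)^{1/2}
	\lesssim
	\frac{1}{r}\left(\aver{2B}\bigl|u-\aver{2B}u\,d\mu\bigr|^{2}\,d\mu\right)^{1/2}.
\end{equation*}

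Next, I would use the self-improvement of Poincaré inequalities of Keith--Zhong \cite{KZ}: $(P_{2})$ implies $(P_{q_{0}})$ for some $q_{0}\in(1,2)$, and by choosing $q_{0}$ close enough to $2$ we may assume $\nu(1/q_{0}-1/2)\leq 1$, so that the Sobolev--Poincaré inequality $(P_{q_{0},2})$ of Lemma~\ref{met} holds. Plugging this into the Caccioppoli estimate gives
\begin{equation*}
	\left(\aver{B}|\Gamma u|^{2}\,d\mu\right)^{1/2}
	\lesssim
	\left(\aver{2B}|\Gamma u|^{q_{0}}\,d\mu\right)^{1/q_{0}}.
\end{equation*}
Since harmonicity of $u$ on $2B$ implies harmonicity on $2Q$ for every ball $Q$ with $2Q\subset \alpha B$ (choose e.g.\ $\alpha=3/2$, $\beta=7/4$), the same estimate holds with $B$ replaced by any such sub-ball $Q$.

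Finally, I would apply Gehring's lemma (Theorem~\ref{thm:gehring}) with $f=|\Gamma u|$, exponents $p=q_{0}<2=q$, and the sub-ball configuration just described, to obtain $\epsilon>0$ and
\begin{equation*}
	\left(\aver{B}|\Gamma u|^{2+\epsilon}\,d\mu\right)^{1/(2+\epsilon)}
	\lesssim
	\left(\aver{2B}|\Gamma u|^{q_{0}}\,d\mu\right)^{1/q_{0}}
	\leq
	\left(\aver{2B}|\Gamma u|^{2}\,d\mu\right)^{1/2},
\end{equation*}
where the last step is Hölder's inequality since $q_{0}<2$. This is exactly $(RH_{p})$ for $p=2+\epsilon$, and by monotonicity of reverse Hölder exponents in the left-hand side it holds on the full interval $(2,2+\epsilon)$.

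The only mildly delicate point is step one, where one must verify that $\eta^{2}(u-c)$ is a legitimate test function in $\mathcal F$ and that the Leibniz-type manipulations are valid for the carré du champ; these are standard in the strongly local regular Dirichlet setting (see the framework in \cite{GSC,BCF1}). The rest is bookkeeping of scales to fit the sub-ball hypothesis of Gehring's lemma.
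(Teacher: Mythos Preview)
Your proof is correct, but it takes a different route from the paper's.

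The paper deliberately avoids Keith--Zhong. Instead of the full Caccioppoli inequality followed by $(P_{q_0,2})$ with $q_0<2$, the paper stops at the intermediate step
\[
\int \chi_B^2 |\Gamma f|^2\,d\mu \leq \int \left|f-\aver{B}f\,d\mu\right|\,\chi_B\,|\Gamma\chi_B|\,|\Gamma f|\,d\mu,
\]
applies H\"older with exponents $q>2$ and $q'<2$, and uses only the Sobolev--Poincar\'e $(P_{2,q})$ from Lemma~\ref{met} (no self-improvement of $(P_2)$ needed). This produces
\[
\int \chi_B^2 |\Gamma f|^2\,d\mu \lesssim \left(\int \chi_B^{q'}|\Gamma f|^{q'}\,d\mu\right)^{1/q'}\left(\int_{2B}|\Gamma f|^2\,d\mu\right)^{1/2},
\]
which still carries an $L^2$ factor on the right. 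The paper then removes it by summing over a Whitney covering of a larger ball and using $\ell^{q'}\hookrightarrow\ell^2$, arriving at the clean reverse inequality $\left(\aver{B_1}|\Gamma f|^2\right)^{1/2}\lesssim\left(\aver{B_1}|\Gamma f|^{q'}\right)^{1/q'}$; only then is Gehring invoked.

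So your argument is shorter and more transparent, at the cost of invoking the deep Keith--Zhong theorem; the paper's argument is more hands-on (Whitney summation plus an $\ell^p$ embedding) but relies only on Gehring and elementary consequences of $(P_2)$. The paper emphasises elsewhere (see the remark after Theorem~\ref{thm:th}) that avoiding Keith--Zhong is a feature of its approach, so this distinction is worth keeping in mind.
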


\begin{rem} Note that the proof only requires the conservation property for $\Gamma$ and not for the operator $L$.
\end{rem}

\begin{proof}
Let $B_0=B(x_0,r_0)$ be a ball and $f\in{\mathcal D}$ be a function, harmonic on $2B_0$. Then for every sub-ball $B\subset B_0$ of radius $r\leq r_0$, consider a Lipschitz function $\chi_B$, supported on $2B$, equal to $1$ on $B$ with $\|\Gamma \chi_B\|_\infty \lesssim r^{-1}$.
We have (by using the Dirichlet form structure and Leibniz inequality for carr\'e du champ $\Gamma$), since $\chi_B$ is supported on $2B_0$ where $f$ is harmonic
$$ \int \chi_B^2 |\Gamma f|^2 \, d\mu \leq \int \left|f-\aver{B} f d\mu\right| \chi_B \left|\Gamma \chi_B\right| |\Gamma f| \, d\mu.$$
Consider $q>2$ such that $\max(0, \frac{1}{2}-\frac{1}{\nu}) < \frac{1}{q} < \frac{1}{2}$. Then by Lemma \ref{met}, we know that $(P_2)$ implies a $L^2$-$L^q$ Sobolev-Poincar\'e inequality. H\"older's inequality (with $q'$ the conjugate exponent of $q$) yields
$$ \int \chi_B^2 |\Gamma f|^2 \, d\mu \lesssim \left(\int \chi_B^{q'} |\Gamma f|^{q'} \, d\mu\right)^{1/q'} r^{-1} \left( \int_{2B} \left|f-\aver{B} f d\mu \right|^q \, d\mu\right)^{1/q}.$$
Using the $L^2$-$L^q$ Sobolev-Poincar\'e inequality, this implies
\begin{equation} \int \chi_B^2 |\Gamma f|^2 \, d\mu \lesssim \left(\int \chi_B^{q'} |\Gamma f|^{q'} \, d\mu\right)^{1/q'} \left(\int_{2B} |\Gamma f|^2 \, d\mu\right)^{1/2}. \label{eq:boule} \end{equation}
Then let us fix a ball $B_1 \subset B_0$ and consider $(Q^j)_j$ a Whitney covering of $B_1$, which is a collection of balls satisfying in particular
 $$ \textrm{$(Q^j)_j$ is a covering of $B_1$ and $(2Q^j)_j$ is also a bounded covering of $B_1$.} $$
So \eqref{eq:boule} applied to each $Q^j$ implies 
$$ \int_{Q^j} |\Gamma f|^2 \, d\mu \lesssim \left(\int_{2Q^j} |\Gamma f|^{q'} \, d\mu\right)^{1/q'} \left(\int_{2Q^j} |\Gamma f|^2 \, d\mu\right)^{1/2}.$$
By summing over $j$, Cauchy-Schwarz's inequality gives
$$ \int_{B_1} |\Gamma f|^2 \, d\mu \lesssim\left( \sum_{j} \left(\int_{2Q^j} |\Gamma f|^{q'} \, d\mu\right)^{2/q'}\right)^{1/2} \left(\int_{B_1} |\Gamma f|^2 \, d\mu\right)^{1/2}.$$
Consequently, since $q'<2$ (so $\ell^{q'} \subset \ell^2$), we deduce that
$$ \left(\int_{B_1} |\Gamma f|^2 \, d\mu\right)^{1/2} \lesssim \left(\int_{B_1} |\Gamma f|^{q'} \, d\mu\right)^{1/q'}.$$
This property holds for every sub-ball $B_1 \subset B_0$. We may then apply Gehring's argument: there exists $\epsilon>0$ such that
$$ \left(\int_{B_0} |\Gamma f|^p \, d\mu\right)^{1/p} \lesssim \left(\int_{2B_0} |\Gamma f|^{2} \, d\mu\right)^{1/2},$$
uniformly with respect to the ball $B_1$ for every $p\in(2,2+\epsilon)$.
\end{proof}

\subsection{Boundedness of Riesz transforms in a self-adjoint setting} \label{subsec:r}

Let us now focus on the situation where $L$ does not satisfy the conservation property. In this case, the use of Poincar\'e inequalities does not seem to help and we aim to explain how the reverse H\"older inequalities $(RH_p)$ can be used to get around this difficulty.

\begin{theorem}\label{thm:rp} Let $(X,\mu,\Gamma, L)$ be as in Section \ref{sec11} with $L$ self-adjoint. If $p^L\leq \nu$, assume in addition  \eqref{eq:RR2}. If $(G_{p_0})$ and \eqref{RHp} hold for some $p_0\in(2,p^L)$, then $(R_p)$ is satisfied for every $p\in (2,p_0)$.
\end{theorem}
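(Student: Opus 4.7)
The strategy is to apply the extrapolation result Proposition \ref{prop:extrapolation} to the sublinear operator $T = \mathcal{R} := \Gamma L^{-1/2}$, bounded on $L^2$ by \eqref{E2}, with the smoothing family $A_r := I - (I - e^{-r^2L})^M$ for an integer $M$ chosen sufficiently large in terms of $p_0$ and $\nu$. Verifying the off-diagonal conditions \eqref{eq:ass1} and \eqref{eq:ass2} of the proposition with $q = p_0$ will then yield $(R_p)$ for every $p \in (2, p_0)$.

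\emph{The $L^2$-$L^2$ estimate \eqref{eq:ass1}.} For $T(I-A_r) = \mathcal{R}(I-e^{-r^2 L})^M$ we use the subordination formula
\[
L^{-1/2}(I-e^{-r^2L})^M = \pi^{-1/2}\int_0^\infty (I-e^{-r^2L})^M e^{-sL}\, s^{-1/2}\, ds,
\]
and split the $s$-integral at $s=r^2$: the $M$-fold cancellation of $(I-e^{-r^2L})^M$ provides arbitrarily high cancellation for small $s$, while $s^{-1/2}$-integrability forces sufficient decay for large $s$. Combining with Davies-Gaffney \eqref{eq:DG} and $(G_2)$ yields $L^2$-$L^2$ off-diagonal estimates for $\mathcal{R}(I-A_r)$ of any desired order, summing to the desired bound by $\inf_B \mathcal{M}_2 f$.

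\emph{The $L^2$-$L^{p_0}$ estimate \eqref{eq:ass2}.} Set $g := L^{-1/2} A_r f$, so that $\Gamma g = \mathcal{R} A_r f$ and $Lg = L^{1/2} A_r f \in L^2$. Fix a ball $B$ of radius $r$ and apply Lemma \ref{lem:LM} (or Lemma \ref{lem:LM-ter} when $p^L \leq \nu$, under the supplementary assumption \eqref{eq:RR2}) on $2B$: this produces $u \in \mathcal{F}$ harmonic on $2B$, with $v := g - u$ supported in $2B$. Decomposing $\Gamma g = \Gamma u + \Gamma v$, the reverse H\"older property $(RH_{p_0})$ applied to $u$ gives
\[
\left(\aver{B}|\Gamma u|^{p_0}\,d\mu\right)^{1/p_0} \lesssim \left(\aver{2B}|\Gamma u|^2\,d\mu\right)^{1/2} \leq \left(\aver{2B}|\Gamma g|^2\,d\mu\right)^{1/2} + \left(\aver{2B}|\Gamma v|^2\,d\mu\right)^{1/2}.
\]
The first $L^2$ average is bounded by $\inf_B[\mathcal{M}_2(\mathcal{R} f) + \mathcal{M}_2 f]$ via the identity $\mathcal{R} A_r = \mathcal{R} - \mathcal{R}(I-A_r)$ together with \eqref{eq:ass1} and the $L^2$-boundedness of $\mathcal{R}$. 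The second is dominated by $r\bigl(\aver{2B}|L^{1/2}A_r f|^2\,d\mu\bigr)^{1/2}$ via Lemma \ref{lem:LM}; writing $rL^{1/2}A_r = \sum_{k=1}^M c_k k^{-1/2}(kr^2L)^{1/2}e^{-kr^2L}$, the analyticity of the semigroup and \eqref{eq:DG} yield $L^2$-$L^2$ off-diagonal estimates for this operator at scale $r$, giving again $\inf_B \mathcal{M}_2 f$.

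\emph{The main obstacle: controlling $\Gamma v$ in $L^{p_0}$.} It remains to bound $\bigl(\aver{B}|\Gamma v|^{p_0}\,d\mu\bigr)^{1/p_0}$ by the same pointwise maximal quantities, which is the principal technical difficulty. The plan is to exploit that $rL^{1/2}A_r$ in fact satisfies $L^2$-$L^{p_0}$ off-diagonal estimates under $(G_{p_0})$: when $p^L > \nu$ this follows from Lemma \ref{lemma-bis}, while when $p^L \leq \nu$ the supplementary hypothesis \eqref{eq:RR2} makes Lemma \ref{lem:LM-ter} available (thereby eliminating the $Lg$-term altogether), combined with the general Lemma \ref{lemma}. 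Combining this stronger $L^{p_0}$ smoothing with a Gehring-type self-improvement argument (Theorem \ref{thm:gehring}) applied to the reverse H\"older-type inequality
\[
\left(\aver{Q}|\Gamma g|^{p_0}\,d\mu\right)^{1/p_0} \lesssim \left(\aver{2Q}|\Gamma g|^2\,d\mu\right)^{1/2} + \inf_Q [\mathcal{M}_2(\mathcal{R} f) + \mathcal{M}_2 f],
\]
obtained uniformly for sub-balls $Q \subset B$ by the above harmonic decomposition, upgrades the $L^2$ information on $\Gamma g$ to the $L^{p_0}$ bound required by \eqref{eq:ass2}, completing the proof.
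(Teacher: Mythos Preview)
Your overall strategy (extrapolate via Proposition~\ref{prop:extrapolation}, verify \eqref{eq:ass1} by subordination, verify \eqref{eq:ass2} by approximating $g=L^{-1/2}A_rf$ with a harmonic function and invoking $(RH_{p_0})$) is exactly the right shape, and you correctly identify the $L^{p_0}$-control of $\Gamma v$ as the crux. But the resolution you propose for that step does not close. Two concrete problems: first, Lemma~\ref{lemma-bis} concerns $r\sqrt{L}\,\varphi(r\sqrt{L})$ with $\varphi$ having compact Fourier support, not the semigroup combination $rL^{1/2}A_r=\sum_k c_k(kr^2L)^{1/2}e^{-kr^2L}$; you cannot invoke it for your choice $A_r=I-(I-e^{-r^2L})^M$. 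Second, the Gehring step is circular: to establish the reverse H\"older inequality $(\aver{Q}|\Gamma g|^{p_0})^{1/p_0}\lesssim(\aver{2Q}|\Gamma g|^2)^{1/2}+C$ on a sub-ball $Q$ you would already need the $L^{p_0}$-bound on the compactly supported piece $\Gamma v_Q$, which is what you are trying to prove; and even granting the inequality, Gehring only upgrades $L^{p_0}$ to $L^{p_0+\epsilon}$, it does not lift $L^2$ information to $L^{p_0}$.

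The missing idea is to exploit self-adjointness through \emph{finite propagation speed}. The paper takes $A_r=\varphi(r\sqrt{L})$ with $\varphi$ even, Schwartz, $\supp\hat\varphi\subset[-1,1]$ (Lemma~\ref{lemma0}). With $h=\varphi(r\sqrt{L})L^{-1/2}f$ and $u$ the harmonic approximant on $2B$, one splits
\[
|\Gamma\varphi(r\sqrt{L})h|\le |\Gamma\varphi(r\sqrt{L})(h-u)|+|\Gamma(I-\varphi(r\sqrt{L}))u|+|\Gamma u|.
\]
The first term is handled in $L^{p_0}$ by $L^2$--$L^{p_0}$ off-diagonal estimates for $\Gamma\varphi(r\sqrt{L})$ together with the support of $h-u$ and Faber--Krahn. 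The third uses $(RH_{p_0})$. The decisive point is the middle term: writing $I-\varphi(r\sqrt{L})=\int_0^r (sL)^{-1/2}\varphi'(s\sqrt{L})\cdot L\,ds$ and noting that $z\mapsto z^{-1}\varphi'(z)$ again has Fourier support in $[-1,1]$, each operator $(sL)^{-1/2}\varphi'(s\sqrt{L})$ propagates at distance at most $s\le r$; since $Lu=0$ on $2B$, this term \emph{vanishes identically on $B$}. That is what bypasses any direct $L^{p_0}$-estimate on $\Gamma v$. Lemma~\ref{lemma-bis} then handles the residual term $r\sqrt{L}\,\varphi(r\sqrt{L})f$ when $p^L>\nu$, and when $p^L\le\nu$ the extra hypothesis \eqref{eq:RR2} via Lemma~\ref{lem:LM-ter} removes that term entirely.
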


\begin{proof}
We first assume $p^L>\nu$. 
We will apply Proposition \ref{prop:extrapolation} to the Riesz transform.
Consider an even function $\varphi \in \mathcal{S}(\R)$ with $\supp \hat{\varphi} \subseteq [-1,1]$ and $\varphi=1$ on $[-1/2,1/2]$. 
Let $q\in(2,p_0)$. Let $f \in L^2(X,\mu)$, and let $B$ be a ball of radius $r>0$.

We first check \eqref{eq:ass1} with $A_{r}:=\varphi(r\sqrt{L})$ and $T={\mathcal R}$. We follow the arguments of \cite[Lemma 3.1]{ACDH}, adapted to our approximation operators $A_{r}$. 
We use the integral representation
$$ I-\varphi(r\sqrt{L}) = \int_0^r \sqrt{L} \varphi'(s\sqrt{L}) \, ds$$ to write 
$$
 \left|T(I-\varphi(r\sqrt{L}))f \right| \leq \int_0^r \left|\Gamma \varphi'(s\sqrt{L})f \right| \, ds.
$$
Since $\varphi'$ is supported in $[-1,-1/2] \cup [1/2,1]$, we may repeat similar arguments as in \cite{ACDH} to have $L^2$-$L^2$ off-diagonal estimates for $\Gamma \varphi'(s\sqrt{L})$ and then obtain with the $L^2$-boundedness of $T$ and \eqref{eq:L2OD-fs} that
\begin{align} \label{eq:eq22}
 \left(\aver{B} | T(I-\varphi(r\sqrt{L})) f|^2\, d\mu \right)^{1/2} & \lesssim  \inf_{x\in B} \calM_2(f)(x),
\end{align}
which concludes the proof of \eqref{eq:ass1}.

Let us now check \eqref{eq:ass2}, again with $T={\mathcal R}$ and $A_{r}=\varphi^2(r\sqrt{L})$.
Let $h:=\varphi(r\sqrt{L})L^{-1/2}f$. By Lemma \ref{lem:LM-ter}, there exists $u\in {\mathcal F}$ such that $h-u\in {\mathcal F}$ is supported in the ball $2B$ and $u$ is harmonic in $2B$ with
\begin{equation} \left(\aver{2B} |\Gamma(h-u)|^2 \, d\mu \right)^{1/2} + \left(\aver{2B} |\Gamma u|^2 \, d\mu \right)^{1/2} \lesssim \left(\aver{2B} |\Gamma h|^2 \, d\mu \right)^{1/2}
+ r\left( \aver{2B} |Lh|^2 \, d\mu \right)^{1/2}. 
\label{eq:grad-bis-2} \end{equation}
Then we split, with $h=\varphi(r\sqrt{L})L^{-1/2}f$,
\begin{align} |\Gamma L^{-1/2} \varphi^2(r\sqrt{L}) f| & = |\Gamma \varphi(r\sqrt{L}) h| \nonumber \\
 & \leq |\Gamma \varphi(r\sqrt{L}) (h-u)| + |\Gamma (I-\varphi(r\sqrt{L}))u| + |\Gamma u|.  \label{split}
\end{align}
For the first quantity, we use $L^2$-$L^p$ off diagonal estimates of $\Gamma \varphi(r\sqrt{L})$ (see Lemma \ref{lemma0}) with the support condition of $h-u$ and the Faber-Krahn inequality \eqref{FKR} to obtain
\begin{align*}
\left( \aver{B} \left|\Gamma \varphi(r\sqrt{L}) (h-u) \right|^p \, d\mu \right)^{1/p} & \lesssim r^{-1} \left( \aver{2B} \left| h-u\right|^2 \, d\mu \right)^{1/2} \\
 & \lesssim  \left( \aver{2B} \left| \Gamma (h-u)\right|^2 \, d\mu \right)^{1/2} \\
   &\lesssim \left( \aver{2B} \left| \Gamma h\right|^2 \, d\mu \right)^{1/2}
  + r\left( \aver{2B} |Lh|^2 \, d\mu \right)^{1/2},
 \end{align*}
where we have used \eqref{eq:grad-bis-2} in the last step.

The second quantity in \eqref{split} is equal to $0$. To see this, write 
$$ (I-\varphi(r\sqrt{L}))u = \int_0^r \sqrt{L} \varphi'(s\sqrt{L}) u \, ds =  \int_0^r L^{-1/2} \varphi'(s\sqrt{L}) L u \, ds.$$
Since $\varphi$ is an even function with its spectrum included in $[-1,1]$, also $x\mapsto x^{-1} \varphi'(x)$ is even with its spectrum still included in $[-1,1]$. By the finite propagation speed  property, we deduce that $(sL)^{-1/2} \varphi'(s\sqrt{L})$ propagates at a distance at most $s \leq r$. Since $u$ is harmonic on $2B$ then we deduce that 
$$ (sL)^{-1/2} \varphi'(s\sqrt{L}) Lu =0 \qquad \textrm{on $B$.}$$
Therefore $(I-\varphi(r\sqrt{L}))u=0$ on $B$, and by the locality property of the gradient, 
$$ |\Gamma (I-\varphi(r\sqrt{L}))u|=0 \qquad \textrm{on $B$.}$$

It remains to study the last term in \eqref{split}. Using the reverse H\"older property \eqref{RHp} and that $u$ is harmonic on $2B$ gives
$$ \left(  \aver{B} |\Gamma u|^p \, d\mu \right)^{1/p} \lesssim  \left(\aver{2B} |\Gamma u|^2 \, d\mu \right)^{1/2} \lesssim \left(\aver{2B} |\Gamma h|^2 \, d\mu \right)^{1/2}
+ r\left( \aver{2B} |Lh|^2 \, d\mu \right)^{1/2},$$
where we have used again \eqref{eq:grad-bis-2}.

Consequently, we have obtained
\begin{align} 
 & \left(  \aver{B} |\Gamma L^{-1/2} \varphi(r\sqrt{L})f |^p \, d\mu \right)^{1/p}  \label{eq:sum-riesz} \\ \nonumber
 &\qquad  \lesssim \left(  \aver{2B} |\Gamma L^{-1/2} \varphi(r\sqrt{L}) f|^2 \, d\mu \right)^{1/2} + r\left( \aver{2B} |\sqrt{L} \varphi(r\sqrt{L}) f|^2 \, d\mu \right)^{1/2} \\ \nonumber
  & \qquad \lesssim \left(  \aver{2B} |\Gamma L^{-1/2} (\varphi(r\sqrt{L})-I) f|^2 \, d\mu \right)^{1/2} +  \left(  \aver{2B} |\Gamma L^{-1/2} f|^2 \, d\mu \right)^{1/2} \nonumber \\
  & \qquad  \qquad  + \left( \aver{2B} |r\sqrt{L} \varphi(r\sqrt{L}) f|^2 \, d\mu \right)^{1/2}. \nonumber
\end{align}
The first term is bounded by $\inf_{x\in B} {\mathcal M}_2(f)(x)$ by \eqref{eq:eq22}.
So by Lemma \ref{lemma-bis} with the assumption $p^L>\nu$, we then conclude
$$ \left(  \aver{B} |\Gamma L^{-1/2} \varphi(r\sqrt{L})f |^p \, d\mu \right)^{1/p} \lesssim  \left(  \aver{2B} |\Gamma L^{-1/2} f|^2 \, d\mu \right)^{1/2} + \inf_{x\in B} {\mathcal M}_2(f)(x),$$
which implies \eqref{eq:ass2}.

We may then apply Proposition \ref{prop:extrapolation} to the Riesz transform and conclude the proof of $(R_p)$ for every $p\in[2,p_0)$.

If otherwise $p^L\leq \nu$, the assumption \eqref{eq:RR2} implies that \eqref{eq:grad-bis-2} can be improved to 
\begin{equation*} \left(\aver{2B} |\Gamma(h-u)|^2 \, d\mu \right)^{1/2} + \left(\aver{2B} |\Gamma u|^2 \, d\mu \right)^{1/2} \lesssim \left(\aver{2B} |\Gamma h|^2 \, d\mu \right)^{1/2}.
\end{equation*}
This in turn improves \eqref{eq:sum-riesz} to
\begin{align*} 
 & \left(  \aver{B} |\Gamma L^{-1/2} \varphi(r\sqrt{L})f |^p \, d\mu \right)^{1/p} \\
 & \lesssim \left(  \aver{B} |\Gamma L^{-1/2} \varphi(r\sqrt{L}) f|^2 \, d\mu \right)^{1/2} \\
  & \lesssim \left(  \aver{B} |\Gamma L^{-1/2} (\varphi(r\sqrt{L})-I) f|^2 \, d\mu \right)^{1/2} +  \left(  \aver{2B} |\Gamma L^{-1/2} f|^2 \, d\mu \right)^{1/2}\\
& \lesssim  \left(  \aver{2B} |\Gamma L^{-1/2} f|^2 \, d\mu \right)^{1/2} + \left(  \aver{4B} |f|^2 \, d\mu \right)^{1/2},
 \label{eq:sum-riesz}
  \end{align*}
where in the last step, we use the global $L^2$-boundedness of $\Gamma L^{-1/2} (\varphi(r\sqrt{L})-I)$ and the local property of $\Gamma$ together with the finite  propagation speed property.
\end{proof}

In the same spirit, we also improve one of the main result of \cite{AC}.

\begin{proposition} \label{prop:RHG} Let $(X,\mu,\Gamma,L)$ be as in Section \ref{sec11} with $L$ self-adjoint and $p^L>\nu$. Assume that for some $q>2$ a reverse H\"older property $(RH_{q})$ is satisfied. Then there exists $\epsilon>0$ such that $(G_{p})$ holds for $p\in(2,2+\epsilon)$.
\end{proposition}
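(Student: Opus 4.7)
The plan is to deduce $(G_p)$ for $p$ close to $2$ from the stronger conclusion $(R_p)$ via $L^p$-analyticity of the semigroup (recall that $(R_p)$ always implies $(G_p)$ on $(p_L,p^L)$). So I would adapt the scheme of Theorem \ref{thm:rp} to obtain $(R_p)$ for $p\in(2,2+\epsilon)$, under the weaker hypotheses here (no $(G_{p_0})$ for $p_0>2$, just $(RH_q)$ and $p^L>\nu$). Concretely, I would apply the extrapolation Proposition \ref{prop:extrapolation} to the Riesz transform $T=\Gamma L^{-1/2}$ with approximation $A_r=\varphi^{2}(r\sqrt L)$, where $\varphi$ is an even Schwartz function with $\operatorname{supp}\widehat\varphi\subseteq[-1,1]$ and $\varphi(0)=1$.

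The first hypothesis \eqref{eq:ass1} of Proposition \ref{prop:extrapolation} is verified verbatim as in Theorem \ref{thm:rp}: writing $I-\varphi(r\sqrt L)=\int_0^{r}\sqrt L\,\varphi'(s\sqrt L)\,ds$ and using that $\varphi'$ has spectrum away from $0$, the $L^{2}$-$L^{2}$ off-diagonal bounds for $\Gamma\varphi'(s\sqrt L)$ coming from $(G_2)$ and finite propagation speed yield
\[
\Big(\aver{B}|T(I-A_r)f|^{2}\,d\mu\Big)^{1/2}\lesssim\inf_{x\in B}\mathcal M_2 f(x).
\]

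For the second hypothesis \eqref{eq:ass2}, I would mimic the harmonic-approximation step of Theorem \ref{thm:rp}. Set $h=\varphi(r\sqrt L)L^{-1/2}f$ and apply Lemma \ref{lem:LM} on $2B$ to decompose $h=u+v$ with $u$ harmonic on $2B$ and $v\in\mathcal F$ supported in $2B$. Split
\[
\Gamma L^{-1/2}A_r f \;=\; \Gamma\varphi(r\sqrt L)v \;+\; \Gamma(I-\varphi(r\sqrt L))u \;+\; \Gamma u.
\]
The middle term vanishes on $B$ because $(sL)^{-1/2}\varphi'(s\sqrt L)$ propagates at speed $\le s\le r$ and $Lu=0$ on $2B$. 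The $\Gamma u$ term is handled by $(RH_q)$: one gets $(\aver{B}|\Gamma u|^{q})^{1/q}\lesssim(\aver{2B}|\Gamma u|^{2})^{1/2}$, and the right-hand side is bounded (via the Lemma \ref{lem:LM} control of $\|\Gamma u\|_{L^{2}(2B)}$, the identity $Lh=r^{-1}\psi(r\sqrt L)f$ for $\psi(z)=z\varphi(z)$, and Lemma \ref{lemma-bis} which needs $p^L>\nu$) by $\inf_B[\mathcal M_2(Tf)+\mathcal M_2 f]$.

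The real obstacle is estimating the term $\Gamma\varphi(r\sqrt L)v$ in $L^{p}$ on $B$ for some $p>2$, since we \emph{cannot} invoke $L^{2}$-$L^{p}$ off-diagonal for $\Gamma\varphi(r\sqrt L)$ as in Theorem \ref{thm:rp} (this needed $(G_{p_0})$). My plan is to bootstrap instead from the $L^{2}$-$L^{2}$ off-diagonal bound, the compact support of $v$ in $2B$, the Faber-Krahn inequality \eqref{FKR}, and Lemma \ref{lem:LM}, which together give
\[
\Big(\aver{B}|\Gamma\varphi(r\sqrt L)v|^{2}\,d\mu\Big)^{1/2}\lesssim r^{-1}\Big(\aver{2B}|v|^{2}\,d\mu\Big)^{1/2}\lesssim \Big(\aver{2B}|\Gamma v|^{2}\,d\mu\Big)^{1/2}\lesssim\inf_B\mathcal M_2 f.
\]
To promote this $L^{2}$-bound to an $L^{p}$-bound for some $p>2$ close to $2$, I would interpolate with the $L^{2}$-$L^{p}$ bound on the scalar operator $\varphi(r\sqrt L)$ provided by Lemma \ref{lemma0} (applied to the compactly supported function $v$ and using $p^L>\nu$). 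This interpolation is the crux of the argument and is the reason why one recovers only a narrow range $p\in(2,2+\epsilon)$, rather than the whole interval $(2,q)$ that would be obtained if $(G_{p_0})$ were available. Combining these bounds in Proposition \ref{prop:extrapolation} yields $(R_p)$ for $p\in(2,2+\epsilon)$, and hence $(G_p)$ by $L^{p}$-analyticity.
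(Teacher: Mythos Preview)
Your plan has a genuine gap at the crucial step, and the overall strategy is in fact circular relative to how the paper is organised.

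The problem is the term $\Gamma\varphi(r\sqrt L)v$. You correctly observe that the $L^2$--$L^p$ off-diagonal estimate for $r\Gamma\varphi(r\sqrt L)$ from Lemma~\ref{lemma0} is unavailable here, since that estimate \emph{assumes} $(G_{p_0})$ for some $p_0>2$. Your proposed fix is to ``interpolate'' between the $L^2$ control of $\Gamma\varphi(r\sqrt L)v$ and the $L^2$--$L^p$ control of the scalar operator $\varphi(r\sqrt L)$ applied to $v$. But these are bounds on two different objects: one involves the gradient, the other does not. There is no interpolation theorem that takes an $L^2$ bound on $\Gamma\varphi(r\sqrt L)v$ and an $L^p$ bound on $\varphi(r\sqrt L)v$ and produces an $L^{p'}$ bound on $\Gamma\varphi(r\sqrt L)v$ for some $p'>2$. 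So \eqref{eq:ass2} of Proposition~\ref{prop:extrapolation} cannot be verified this way, and the extrapolation scheme does not close. In effect, you are trying to run the proof of Theorem~\ref{thm:rp} without its key hypothesis $(G_{p_0})$; the paper's logical order is precisely the reverse---Proposition~\ref{prop:RHG} is proved first and then fed into Theorem~\ref{thm:rp} to obtain Theorem~\ref{thm:th}.

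The paper's actual argument (deferred to Proposition~\ref{prop:RHGbis}) proves $(G_p)$ \emph{directly}, without going through $(R_p)$, by a Gehring-type self-improvement. Fix $t>0$ and a ball $B_0$ of radius $\sqrt t$. For every sub-ball $B\subset B_0$ of radius $r\le\sqrt t$, one applies Lemma~\ref{lem:LM} to $e^{-tL}f$ on $2B$, producing a harmonic $u$ with $e^{-tL}f-u$ supported in $2B$. The reverse H\"older property $(RH_q)$ (self-improved to $L^1$--$L^q$, Remark~\ref{rem:rh}) controls $(\aver{B}|\Gamma u|^2)^{1/2}$ by $\aver{2B}|\Gamma u|$, while the Faber--Krahn inequality and Lemma~\ref{lem:LM} control $(\aver{2B}|\Gamma(e^{-tL}f-u)|^2)^{1/2}$ by $r(\aver{2B}|Le^{-tL}f|^2)^{1/2}$. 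The hypothesis $p^L>\nu$ is then used (exactly as in Lemma~\ref{lemma-bis}) to bound this last quantity, uniformly in $r\le\sqrt t$, by a constant $c=c(f,B_0)$ depending only on $B_0$. The upshot is
\[
\Big(\aver{B}|\sqrt t\,\Gamma e^{-tL}f|^2\,d\mu\Big)^{1/2}\lesssim \aver{2B}|\sqrt t\,\Gamma e^{-tL}f|\,d\mu + c(f,B_0)
\]
for all sub-balls $B\subset B_0$, and Gehring's lemma (Theorem~\ref{thm:gehring}) yields an $L^{2+\eta}$ bound on $B_0$. Summing over a covering of $X$ by balls of radius $\sqrt t$ gives $(G_{2+\eta})$.
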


We do not detail the proof of this proposition here, since in Section \ref{sec:appli} we will prove  a stronger result (involving also an elliptic perturbation of the considered operator), see Proposition \ref{prop:RHGbis}. We let the reader check that the proof written there in the context of a Riemannian manifold only relies on Lemma \ref{lem:LM} and so holds in our more general setting.\\

By combining Theorem \ref{thm:rp} with Proposition \ref{prop:RHG}, we get the following result.

\begin{theorem} \label{thm:th}
Let $(X,\mu,\Gamma,L)$ be as in Section \ref{sec11} with $L$ self-adjoint and $p^L>\nu$. Assume that for some $q>2$ a reverse H\"older property $(RH_{q})$ is satisfied. Then there exists $\epsilon>0$ such that $(R_{p})$ holds for $p\in(2,2+\epsilon)$.
\end{theorem}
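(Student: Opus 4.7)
The plan is to deduce Theorem \ref{thm:th} as an immediate combination of the two preceding results, Proposition \ref{prop:RHG} and Theorem \ref{thm:rp}. The idea is that the reverse H\"older assumption $(RH_q)$ alone already forces gradient estimates on the semigroup for exponents slightly above $2$, and once one has both gradient estimates and a reverse H\"older inequality at the same exponent, Theorem \ref{thm:rp} delivers the boundedness of the Riesz transform.

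More precisely, first I would invoke Proposition \ref{prop:RHG}, whose hypotheses are satisfied since $L$ is self-adjoint with $p^L>\nu$ and $(RH_q)$ holds. This yields some $\epsilon_1>0$ such that $(G_p)$ is valid for every $p\in(2,2+\epsilon_1)$. Next, I would observe that $(RH_q)$ automatically implies $(RH_p)$ for every $p\in(2,q]$: indeed, for any ball $B$ and any function $u\in\mathcal F$ harmonic on $2B$, H\"older's inequality on $B$ gives
\[
 \left(\aver{B}|\Gamma u|^{p}\,d\mu\right)^{1/p}\le \left(\aver{B}|\Gamma u|^{q}\,d\mu\right)^{1/q}\lesssim \left(\aver{2B}|\Gamma u|^{2}\,d\mu\right)^{1/2},
\]
where the second step is $(RH_q)$.

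Now pick any $p_0\in(2,\min\{q,\,2+\epsilon_1,\,p^L\})$. By the two preceding points, both $(G_{p_0})$ and $(RH_{p_0})$ hold. Since $L$ is self-adjoint, $p_0\in(2,p^L)$, and the assumption $p^L>\nu$ is in force, Theorem \ref{thm:rp} applies and yields $(R_r)$ for every $r\in(2,p_0)$. Setting $\epsilon:=p_0-2>0$ concludes the proof.

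There is no serious obstacle here since the two ingredients are stated earlier; the only point that requires a sentence of care is checking that a reverse H\"older inequality at exponent $q$ transfers to all intermediate exponents in $(2,q)$ (trivial by H\"older), and that $p_0$ can be chosen simultaneously below $q$, below $2+\epsilon_1$, and below $p^L$, which is clear because all three upper bounds are strictly greater than $2$.
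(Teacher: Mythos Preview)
Your proof is correct and follows exactly the approach of the paper, which simply states that the theorem is obtained ``by combining Theorem~\ref{thm:rp} with Proposition~\ref{prop:RHG}''. You have just spelled out the details (the monotonicity of $(RH_p)$ in $p$ via H\"older and the choice of a common $p_0$), which the paper leaves implicit.
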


\begin{rem}
\begin{itemize}
\item According to Proposition \ref{prop:P2RH}, our assumptions are weaker than the Poincar\'e inequality $(P_2)$, so the previous result improves \cite[Theorem 0.4]{AC}.
\item We emphasise that the proof only relies on Gehring's result \cite{Gehring} (which can be thought of as an application of a Calder\'on-Zygmund decomposition), and never uses the very deep result of Keith-Zhong \cite{KZ} about the self-improvement of Poincar\'e inequalities which was used in \cite{AC}.
\end{itemize}
\end{rem}

\section{Boundedness of Riesz transforms for non self-adjoint operators without $(P_2)$} \label{sec:riesz}

Our aim in this section is to improve the understanding of the links between  gradient estimates $(G_p)$, boundedness of the Riesz transform $(R_p)$ through Poincar\'e inequalities, as initiated in \cite{ACDH} and \cite{AC}. Here, we focus on the case where the operator $L$ is not self-adjoint, but only sectorial.

\subsection{Caccioppoli inequality}

\begin{proposition}[$L^p$ Caccioppoli inequality] \label{prop:caccioppoli} 
Let $(X,\mu,\Gamma,L)$ be as in Section \ref{sec11}. Assume $(G_{p_0})$ for some $p_0\in(2,p^L)$. Suppose $p\in(2,p_0)$, $M \in \N$ and $N>0$. Then for every $f\in {\mathcal D}(L^M)$ and every ball $B$ of radius $r>0$,
$$ \left(\aver{B} |r \Gamma f|^p \,d\mu \right)^{1/p} \lesssim \sum_{\ell\geq 0}  2^{-\ell N} \left[\left(\aver{2^\ell B} |f|^p \,d\mu \right)^{1/p}+ \left(\aver{2^\ell B} |(r^2 L)^Mf|^p \, d\mu \right)^{1/p}\right].$$
Suppose in addition $M>\frac{\nu}{2}(\frac{1}{2}-\frac{1}{p})+\frac{1}{2}$. 
Then for every $f\in {\mathcal D}(L^M)$ and every ball $B$ of radius $r>0$,
$$ \left(\aver{B} |r \Gamma f|^p \,d\mu \right)^{1/p} \lesssim \sum_{\ell\geq 0}  2^{-\ell N} \left[\left(\aver{2^\ell B} |f|^2 \,d\mu \right)^{1/2}+ \left(\aver{2^\ell B} |(r^2 L)^Mf|^2 \, d\mu \right)^{1/2}\right].$$
\end{proposition}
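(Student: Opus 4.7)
The natural decomposition is via the binomial identity
\begin{equation*}
  I = (I - e^{-r^2 L})^M + \sum_{k=1}^M \binom{M}{k}(-1)^{k+1}e^{-kr^2 L},
\end{equation*}
applied to $f$. The sum of semigroups will produce the $\aver{2^\ell B}|f|^p$ contribution on the right, while $(I-e^{-r^2L})^M f$, once rewritten in terms of $(r^2 L)^M f$, will produce the $\aver{2^\ell B}|(r^2 L)^M f|^p$ contribution. Throughout I use the annular decomposition $f=\sum_{\ell\ge 0}f\,\chi_{C_\ell(B)}$ with $C_\ell(B)=2^\ell B\setminus 2^{\ell-1}B$ for $\ell\ge 1$ and $C_0(B)=B$.

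For the semigroup pieces $r\Gamma e^{-kr^2 L}$, $k=1,\dots,M$, Riesz--Thorin interpolation between the $L^2$ gradient Davies--Gaffney bound \eqref{eq:DG} and the uniform $L^{p_0}$ bound $(G_{p_0})$ yields $L^p$--$L^p$ off-diagonal estimates at scale $r$, producing annulus contributions with Gaussian decay $e^{-c4^\ell}$ which dominates any $2^{-\ell N}$. For the cancellation piece, use $I-e^{-r^2L}=\int_0^{r^2}Le^{-sL}\,ds$, rescale $s_i=r^2 t_i$, to write
\begin{equation*}
 (I-e^{-r^2L})^M f=\int_{[0,1]^M} e^{-r^2 s L}(r^2 L)^M f\,dt_1\cdots dt_M,\qquad s:=t_1+\cdots+t_M\in(0,M].
\end{equation*}
Apply $r\Gamma$ inside the integral, and factor $r\Gamma e^{-r^2 sL}=s^{-1/2}\cdot(r\sqrt{s})\Gamma e^{-r^2 sL}$: the second factor satisfies $L^p$--$L^p$ off-diagonal estimates at scale $r\sqrt{s}\le r\sqrt{M}$. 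Covering $B$ by balls of radius $r\sqrt{s}$ and using the annular decomposition of $(r^2L)^M f$ yields, after absorbing the doubling factors from the scale mismatch, a bound of the form
\begin{equation*}
 \left(\aver{B}|r\Gamma e^{-r^2 sL}(r^2L)^M f|^p\right)^{1/p}\lesssim 2^{\ell\nu} s^{-1/2-\nu/2}\,e^{-c\, 4^\ell/s}\,\left(\aver{2^\ell B}|(r^2L)^M f|^p\right)^{1/p}.
\end{equation*}
The change of variables $u=4^\ell/s$ shows $\int_0^M s^{-1/2-\nu/2} e^{-c4^\ell/s}\,ds$ is finite at $\ell=0$ and bounded by $2^{-\ell N}$ for every $N$ when $\ell\ge 1$ (the Gaussian tail beats any polynomial in $4^\ell$), which gives the first inequality after summation in $\ell$.

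For the strengthened estimate with $L^2$ averages on the right, the plan is to upgrade each of the above $L^p$--$L^p$ off-diagonal estimates to $L^2$--$L^p$ ones. For the semigroup pieces, factor $r\Gamma e^{-kr^2L}=(r\Gamma e^{-kr^2L/2})\cdot e^{-kr^2L/2}$: the right factor is $L^2$--$L^p$ off-diagonal at scale $r$ because $p<p^L$, and the left is $L^p$-bounded. For the cancellation piece, invoke Lemma~\ref{lemma} applied to $(r^2sL)^K e^{-r^2 sL/2}$, which is $L^2$--$L^p$ off-diagonal at scale $r\sqrt{s}$ whenever $K>\nu/(2p)$; composing with the $L^p$-bounded factor $(r\sqrt{s})\Gamma e^{-r^2 sL/2}$ transfers the $L^2$--$L^p$ off-diagonal to the composite. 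The assumption $M>\frac{\nu}{2}(\frac12-\frac1p)+\frac12$ is exactly what allows enough powers of $r^2L$ (with the extra $1/2$ accounting for $\Gamma$) to be absorbed into the semigroup factor for this upgrade to go through.

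The main obstacle is the delicate bookkeeping near $s=0$: the singular prefactor $s^{-1/2}$ and the additional $s^{-\alpha}$ coming from covering/doubling factors at the scale mismatch $r\sqrt{s}\le r$ must be balanced against the Gaussian $e^{-c4^\ell/s}$ to ensure both integrability at $s=0$ (the $\ell=0$ case) and arbitrary polynomial decay in $\ell$ (the $\ell\ge 1$ case). It is precisely this balance, once one insists on $L^2$ averages rather than $L^p$ ones, that forces the sharp threshold on $M$ in the second statement.
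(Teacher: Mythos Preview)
Your overall strategy is sound and close to the paper's, but the bookkeeping in the first part is broken as written. You claim the displayed bound with exponent $s^{-1/2-\nu/2}$ and then assert that $\int_0^M s^{-1/2-\nu/2}e^{-c4^\ell/s}\,ds$ is finite at $\ell=0$. That integral diverges at $s=0$ whenever $\nu\ge 1$, which would make your argument fail already for $M=1$. Two corrections are needed. First, the factor $s^{-\nu/2}$ is spurious in the $L^p$--$L^p$ case: interpolating \eqref{eq:DG} with $(G_{p_0})$ yields set-to-set estimates $\|r\Gamma e^{-r^2 sL}\|_{L^p(E)\to L^p(F)}\lesssim s^{-1/2}(1+d(E,F)/(r\sqrt{s}))^{-\tilde N}$, so no covering of $B$ by balls of radius $r\sqrt s$ is necessary. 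Second, when you pass from the integral over $[0,1]^M$ in $dt_1\cdots dt_M$ to an integral in $s=\sum t_i$, the density of $s$ near $0$ is $\sim s^{M-1}$; you omitted this Jacobian. With both fixes the $\ell=0$ integral is $\int_0^\varepsilon s^{M-3/2}\,ds<\infty$ for every $M\ge 1$, and the argument closes.

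For the second statement your plan is right in outline but the invocation of Lemma~\ref{lemma} is misplaced: the powers $(r^2L)^M$ have already been absorbed into $g=(r^2L)^Mf$, so there is no $(r^2sL)^K$ left to feed into that lemma. The correct move is simply to compose $L^p$--$L^p$ off-diagonal estimates for $(r\sqrt s)\Gamma e^{-r^2sL/2}$ with $L^2$--$L^p$ off-diagonal estimates for $e^{-r^2sL/2}$ (both at scale $r\sqrt s$), then convert from scale $r\sqrt s$ to scale $r$; the scale conversion costs $s^{-\frac{\nu}{2}(\frac12-\frac1p)}$, and together with the $s^{-1/2}$ gradient factor and the $s^{M-1}$ Jacobian the $\ell=0$ integral is $\int_0^\varepsilon s^{M-\frac32-\frac{\nu}{2}(\frac12-\frac1p)}\,ds$, which converges exactly when $M>\frac{\nu}{2}(\frac12-\frac1p)+\frac12$.

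By way of comparison, the paper avoids the delicate $s\to 0$ integration by using $(I-e^{-r^2L})^{2M}$ instead of $(I-e^{-r^2L})^{M}$ and writing $(r^2L)^{-1}(I-e^{-r^2L})=2S_r$ with $S_r=\int_0^r(s/r)^2 e^{-s^2L}\,ds/s$; the built-in $(s/r)^2$ kills the small-time singularity. The paper then iterates $L^{q_1}$--$L^{q_2}$ estimates for $S_r$ (step size $\nu(\frac1{q_1}-\frac1{q_2})<2$) and one step of $r\Gamma S_r$ (step size $<1$) to go from $L^2$ to $L^p$; the threshold on $M$ drops out of counting steps rather than from an $s$-integrability condition. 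Your approach and the paper's are essentially equivalent once your bookkeeping is fixed --- indeed your integral operator is nothing but $2^M r\Gamma S_r^M$ in disguise --- but the $S_r$ presentation is cleaner.
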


\begin{rem} The second statement is more accurate than the first one. It is interesting to observe that it corresponds to a localised Sobolev embedding with the suitable scaling, since it yields a local version of (with the standard notation for Sobolev spaces) $W^{2M,2}_{L} \hookrightarrow W^{1,p}_{\Gamma}$ with the usual condition $(2M-1)>\nu\left(\frac{1}{2}-\frac{1}{p}\right)$.
\end{rem}

\begin{proof}
Let $p \in (2,p_0)$. For $f \in \mathcal{D}(L^M)$, write 
\begin{align*}
	f& =(I-(I-e^{-r^2L})^{2M})f + (I-e^{-r^2L})^{2M}f\\
	&=\sum_{k=1}^{2M}c_k e^{-kr^2L}f + (r^2L)^{-M}(I-e^{-r^2L})^{2M} (r^2L)^Mf,
\end{align*}
where $c_k$ are binomial coefficients. The statement of the proposition will then be a direct consequence of $L^p$-$L^p$ off-diagonal estimates ($L^2$-$L^p$ off-diagonal estimates, resp.) at scale $r$ of the operators $r\Gamma e^{-kr^2L}$ and $r\Gamma (r^2L)^{-M}(I-e^{-r^2L})^{2M}$. By interpolating $(G_{p_0})$ with \eqref{eq:DG}, one obtains $L^p$-$L^p$ off-diagonal estimates with exponential decay for the gradient of the semigroup, and in particular for given $\tilde{N}>0$
\begin{equation}
	\|r \Gamma e^{-r^2L} f \|_{L^p(E) \to L^p(F)} \lesssim \left(1+ \frac{d(E,F)}{r}\right)^{-\tilde{N}}
\label{eq:gradient}
\end{equation}
for all $r>0$ and all Borel sets $E,F \subset X$. On the other hand,  $e^{-r^2/2L}$ satisfies $L^2$-$L^p$ off-diagonal estimates by assumption. Composing these estimates with $L^p$-$L^p$ off-diagonal estimates for $r \Gamma e^{-r^2/2L}$  yields the desired $L^2$-$L^p$ off-diagonal estimates for $r \Gamma e^{-r^2L}$. 
For the second operator, one writes
\begin{align*}
	I-e^{-r^2L} = -\int_0^r \partial_s e^{-s^2L}\,ds
	=2(r^2L)S_r
\end{align*}
with
$$ S_r=\int_0^r \left(\frac{s}{r}\right)^2 e^{-s^2L} \,\frac{ds}{s},$$
therefore
\begin{equation}
	(r^2L)^{-M}(I-e^{-r^2L})^{2M} = cS_r^M \sum_{\ell=0}^M e^{-\ell r^2L}.
	\label{eq:Sr}
\end{equation}
From \eqref{eq:gradient}, we have that for $s \in (0,r)$, $s\Gamma e^{-s^2L}$ satisfies  $L^p$-$L^p$ off-diagonal estimates at scale $s$ and therefore also at scale $r$. Thus, 
\begin{align*}
	\|r\Gamma S_r\|_{L^p(E) \to L^p(F)} 
	&\lesssim \int_0^r \left(\frac{s}{r}\right)^2 \|r\Gamma e^{-s^2L} \|_{L^p(E) \to L^p(F)} \,\frac{ds}{s}\\
	& \lesssim \left(1+ \frac{d(E,F)}{r}\right)^{-\tilde{N}}.
\end{align*}
By composing  these off-diagonal estimates with $L^p$-$L^p$ off-diagonal estimates for $e^{-r^2L}$, one obtains from \eqref{eq:Sr} that the operator $r\Gamma (r^2L)^{-M}(I-e^{-r^2L})^{2M}$  satisfies $L^p$-$L^p$ off-diagonal estimates at scale $r$.
Since $\tilde{N}$ can be chosen arbitrarily, this already gives the first statement of the proposition. To finish the proof of the second statement, assume now that $M>\frac{\nu}{2}(\frac{1}{2}-\frac{1}{p})+\frac{1}{2}$. Let $B$ be a ball with radius $r$,  let $2\leq q_1\leq q_2 \leq p$. Then $L^{q_1}$-$L^{q_2}$ off-diagonal estimates for $s\Gamma e^{-s^2L}$ yield
\begin{align*}
	\left(\aver{B}|r\Gamma S_r f|^{q_2}\,d\mu\right)^{1/{q_2}}
	&\leq \int_0^r \left(\frac{s}{r}\right)^2 \left(\aver{B}|r\Gamma e^{-s^2L}f|^{q_2}\,d\mu\right)^{1/q_2} \,\frac{ds}{s}\\
	& \lesssim \sum_{\ell=0}^\infty 2^{-\ell N} \int_0^r \left(\frac{s}{r}\right)^{1-\nu(\frac{1}{q_1}-\frac{1}{q_2})} \,\frac{ds}{s} \left(\aver{2^\ell B}|f|^{q_1}\,d\mu\right)^{1/q_1}\\
	& \lesssim \sum_{\ell=0}^\infty 2^{-\ell N}  \left(\aver{2^\ell B}|f|^{q_1}\,d\mu\right)^{1/q_1},
\end{align*}
under the assumption that $\nu(\frac{1}{q_1}-\frac{1}{q_2})<1$.
An analogous computation yields
\begin{align*}
	\left(\aver{B}|S_r f|^{q_2}\,d\mu\right)^{1/{q_2}}
	& \lesssim \sum_{\ell=0}^\infty 2^{-\ell N} \int_0^r \left(\frac{s}{r}\right)^{2-\nu(\frac{1}{q_1}-\frac{1}{q_2})} \,\frac{ds}{s} \left(\aver{2^\ell B}|f|^{q_1}\,d\mu\right)^{1/q_1}\\
	& \lesssim \sum_{\ell=0}^\infty 2^{-\ell N}  \left(\aver{2^\ell B}|f|^{q_1}\,d\mu\right)^{1/q_1},
\end{align*}
under the assumption that $\nu(\frac{1}{q_1}-\frac{1}{q_2})<2$.
By iterating these off-diagonal estimates, we obtain that $r\Gamma S_r^M$ satisfies $L^2$-$L^p$ off-diagonal estimates. We finally combine these estimates with $L^2$-$L^2$ off-diagonal estimates for $e^{-r^2L}$, we then get $L^2$-$L^p$ off-diagonal estimates for the operator $r\Gamma (r^2L)^{-M}(I-e^{-r^2L})^{2M}$ from the representation \eqref{eq:Sr}.
\end{proof}

\subsection{Boundedness of Riesz transform under conservation property}

Let us first state an improvement of one the main results in \cite{ACDH}: the point is that we are able to replace $(P_2)$ by the weaker assumption $(P_{p_0})$ for $p_0>2$.

\begin{theorem}\label{thmbis} Let $(X,\mu,\Gamma,L)$ be as in Section \ref{sec11} with the conservation property for both $\Gamma$ and $L$.
Assume for some $p_0\in[2,p^L)$ that the combination $(P_{p_0})$ with $(G_{p_0})$ holds. If $p_0>2$ then $(R_p)$ holds for every $p\in(p_L ,p_0)$. If $p_0=2$ then there exists $\epsilon>0$ such that $(R_p)$ holds for every $p\in(p_L ,2+\epsilon)$.
\end{theorem}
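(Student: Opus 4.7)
The plan is to mimic the extrapolation strategy of \cite{ACDH} (Proposition \ref{prop:extrapolation}) but replace the $L^2$-Caccioppoli plus $(P_2)$ input by an $L^{p_0}$-Caccioppoli plus $(P_{p_0})$ input, leveraging Keith--Zhong's self-improvement of Poincar\'e together with Theorem \ref{ac}. The $p_0=2$ case reduces to the $p_0>2$ case: Theorem \ref{ac} applied to $(P_2)+(G_2)$ (with $(G_2)$ free) produces some $\epsilon_0>0$ with $(G_{2+\epsilon_0})$, and $(P_2)\Rightarrow(P_{2+\epsilon_0})$ is trivial, so the assertion at $p_0=2$ follows from that at $2+\epsilon_0$.

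Assume now $p_0>2$. A further application of Theorem \ref{ac} upgrades $(G_{p_0})$ to $(G_{p_0+\epsilon})$ for some $\epsilon>0$; this is crucial because it allows us to use Proposition \ref{prop:caccioppoli} at the exponent $p_0$ itself (not merely at exponents $<p_0$). I then apply Proposition \ref{prop:extrapolation} to $T=\mathcal{R}$ with the approximation $A_r:=I-(I-e^{-r^2L})^M$ for $M$ a sufficiently large integer and target exponent $q=p_0$, which will yield $(R_p)$ for $p\in(2,p_0)$; concatenation with the existing theory for $p\in(p_L,2]$ gives the claim. Hypothesis \eqref{eq:ass1} for $\mathcal{R}(I-e^{-r^2L})^M$ is the easy half: by the identity \eqref{eq:Sr}, $L^2$-$L^2$ off-diagonal estimates for $\mathcal{R}(I-e^{-r^2L})^M$ follow from Davies--Gaffney bounds and $(R_2)$, exactly as in \cite[Lemma 3.1]{ACDH}.

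The heart of the argument is \eqref{eq:ass2}. Set $u:=L^{-1/2}A_rf$, so $\mathcal{R}A_rf=\Gamma u$ and $u\in\mathcal{D}(L^N)$ for every $N$. The conservation of $L$ (so $L(c)=0$ for constants $c$) and of $\Gamma$ (so $\Gamma(c)=0$) allow one to apply Proposition \ref{prop:caccioppoli} to $u-c_\ell$ with $c_\ell:=\aver{2^\ell B}u\,d\mu$, at exponent $p_0$, obtaining
\begin{equation*}
\left(\aver{B}|r\Gamma u|^{p_0}d\mu\right)^{1/p_0}\lesssim\sum_{\ell\geq 0}2^{-\ell N}\Bigl[\left(\aver{2^\ell B}|u-c_\ell|^{p_0}d\mu\right)^{1/p_0}+\left(\aver{2^\ell B}|(r^2L)^Mu|^{p_0}d\mu\right)^{1/p_0}\Bigr].
\end{equation*}
For the second summand, write $(r^2L)^Mu=r(r^2L)^{M-1/2}A_rf$ as a finite combination of $(kr^2L)^{M-1/2}e^{-kr^2L}f$ and invoke Lemma \ref{lemma} (with $M-1/2>\nu/(2p_0)$) to bound it by $r\inf_B\mathcal{M}_2(f)$ with enough decay in $\ell$. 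For the first summand, Keith--Zhong produces $(P_{p_0-\kappa})$ for some small $\kappa>0$, and Lemma \ref{met} upgrades this to the Sobolev--Poincar\'e inequality $(P_{p_0-\kappa,p_0})$ (for $\kappa$ small enough that $\nu(\frac{1}{p_0-\kappa}-\frac{1}{p_0})\leq 1$), yielding
\begin{equation*}
\left(\aver{2^\ell B}|u-c_\ell|^{p_0}d\mu\right)^{1/p_0}\lesssim 2^\ell r\left(\aver{2^\ell B}|\Gamma u|^{p_0-\kappa}d\mu\right)^{1/(p_0-\kappa)}.
\end{equation*}
Since $L^{-1/2}$ commutes with the semigroup in the $L^2$-functional calculus, one has $\Gamma u=-\sum_k c_k\mathcal{R}e^{-kr^2L}f$; factoring $\mathcal{R}e^{-kr^2L}=(\Gamma e^{-kr^2L/2})\circ(e^{-kr^2L/2}L^{-1/2})$ and interpolating $(G_{p_0+\epsilon})$ with $(G_2)$ and \eqref{eq:DG} provide $L^2$-$L^{p_0-\kappa}$ off-diagonal estimates for $\mathcal{R}e^{-kr^2L}$, which control the right-hand side above by $\inf_B\mathcal{M}_2(\mathcal{R}f)$ (with geometric decay across annuli). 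Choosing $N$ large enough to absorb the $2^\ell$ factor closes \eqref{eq:ass2}.

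The main obstacle lies in the last step: one must establish the $L^2$-$L^{p_0-\kappa}$ off-diagonal estimates for $\mathcal{R}e^{-tL}$ without any circular appeal to $(R_{p_0-\kappa})$, using only $(R_2)$ and the improved $(G_{p_0+\epsilon})$ from Theorem \ref{ac}. A secondary delicate point is the simultaneous balancing of the Keith--Zhong gap $\kappa$, the Sobolev threshold $\nu(\frac{1}{p_0-\kappa}-\frac{1}{p_0})\leq 1$, the requirement $M-1/2>\nu/(2p_0)$ in Lemma \ref{lemma}, and the decay exponent $N$ needed to absorb the $2^\ell$ blow-up coming from Poincar\'e on dilated balls.
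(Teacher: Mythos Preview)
There are two genuine gaps.

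\textbf{First, the appeals to Theorem \ref{ac} are illegitimate.} Theorem \ref{ac} is stated and proved only for self-adjoint $L$ (its proof rests on the finite-speed Caccioppoli inequality, Proposition \ref{prop:Caccioppoli}, which uses the wave equation). Theorem \ref{thmbis} lives in Section \ref{sec:riesz}, where $L$ is merely sectorial. So neither your reduction of the case $p_0=2$ to $p_0>2$, nor your upgrade $(G_{p_0})\Rightarrow(G_{p_0+\epsilon})$, is available. The latter is in any case unnecessary: in Proposition \ref{prop:extrapolation} you only need to target an arbitrary $q\in(2,p_0)$, and Proposition \ref{prop:caccioppoli} at exponent $q<p_0$ is already provided by $(G_{p_0})$.

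\textbf{Second, and more seriously, your closure of \eqref{eq:ass2} is circular.} After Caccioppoli plus Sobolev--Poincar\'e you are left with $\bigl(\aver{2^\ell B}|\Gamma u|^{p_0-\kappa}\,d\mu\bigr)^{1/(p_0-\kappa)}$, where $\Gamma u=\mathcal{R}A_r f$ and $p_0-\kappa>2$. Your factorisation $\mathcal{R}e^{-kr^2L}=(\Gamma e^{-kr^2L/2})\circ(e^{-kr^2L/2}L^{-1/2})$ does not yield $L^2$--$L^{p_0-\kappa}$ off-diagonal estimates: the operator $e^{-kr^2L/2}L^{-1/2}$ is unbounded, and if you instead treat $L^{-1/2}f$ as the input function, the off-diagonal estimates for $\Gamma e^{-kr^2L/2}$ and $e^{-kr^2L/2}$ give control by $\mathcal{M}_2(L^{-1/2}f)$, not $\mathcal{M}_2(\mathcal{R}f)$. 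Converting $L^{-1/2}f-c$ back to $\Gamma L^{-1/2}f=\mathcal{R}f$ via Poincar\'e would require $(P_2)$, which is precisely the hypothesis the theorem is designed to avoid; with only $(P_{p_0-\kappa})$ you end up needing $L^{p_0-\kappa}$ averages of $\mathcal{R}f$, i.e.\ essentially $(R_{p_0-\kappa})$.

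The paper breaks this circularity by a different mechanism: Lemma \ref{lem} first proves a \emph{reverse H\"older inequality} for the auxiliary function $h=|\Gamma P_{r^2}^{(D)}g|+c(g,B)$ on all sub-balls of a fixed ball $B_{\sqrt t}$ (this is where Caccioppoli and $(P_{p_0-\kappa,p})$ enter), and then applies Gehring's lemma (Theorem \ref{thm:gehring}) together with the self-improvement of the right-hand exponent to bring the estimate down to an $L^2$ average of $\Gamma P_{r^2}^{(D)}g$. That $L^2$ average is then split as $\mathcal{R}f-\mathcal{R}(I-P_{r^2}^{(D)})f$ and controlled by $\mathcal{M}_2(\mathcal{R}f)+\mathcal{M}_2(f)$ using only \eqref{eq:ass1-bis}. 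The Gehring step is the missing idea in your proposal; without it one cannot pass from the $L^{p_0-\kappa}$ level back to $L^2$ without assuming what is to be proved.
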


\begin{rem} For $p_0<\nu$, where $\nu$ is the exponent in \eqref{dnu}, $(P_{p_0})$ is not necessary for $(R_p)$ to hold for every $p\in(1,p_0)$, as  the example of the connected sum of two copies of $\R^n$ shows (see {\rm \cite{CCH}}).
\end{rem}

\begin{rem} In \cite[Theorem 6.3]{BCF1}, it is shown that if $L$ is self-adjoint then $(G_p)$ with $(P_p)$ for some $p>2$ implies $(P_2)$. This argument goes trough a (perfectly) localised Caccioppoli inequality. In the case where the operator is not self-adjoint, such an inequality is unknown. Even if we are showing a weaker version with off-diagonal terms (Proposition \ref{prop:caccioppoli}), it is not clear how this would allow us to adapt the arguments of \cite{BCF1} in order to obtain $(P_2)$ from $(G_p)+(P_p)$. 

So as far as we know, Theorem \ref{thmbis} is a real improvement with respect to \cite{ACDH}.
\end{rem}

\begin{proof}[Proof of Theorem $\ref{thmbis}$] We only consider the case $p_0>2$. The case $p_0=2$ can be treated similarly.
We apply Proposition \ref{prop:extrapolation} to the Riesz transform ${\mathcal R}:=\Gamma L^{-1/2}$. Let $q\in(2,p_0)$, and let $D \in \N$ with $D>\frac{\nu}{4}$. Moreover, let $f \in L^2(X,\mu)$ and $B$ be a ball of radius $r>0$. For $t>0$, denote 
$$ P_t^{(D)}:=I - (I-e^{-tL})^D.$$ 

We check in two steps the assumptions of Proposition \ref{prop:extrapolation} for  $T={\mathcal R}$ and $A_{r^2}= P_{r^2}^{(D)}$.

\medskip
\noindent {\bf Step 1:} Checking of \eqref{eq:ass1}.

Following \cite[Lemma 3.1]{ACDH}, which only relies on the Davies-Gaffney estimates \eqref{eq:DG}, we already know that \eqref{eq:ass1} is satisfied for  $T={\mathcal R}$ and $A_{r^2}=P_{r^2}^{(D)}$. More precisely, it is proven that 
\begin{equation} \label{eq:ass1-bis}
 \left( \aver{B} |\Gamma L^{-1/2}  (I-P^{(D)}_{r^2} )f |^{2} \, d\mu \right)^{1/2} \lesssim  \sum_{j=0}^\infty  2^{j(\frac{\nu}{2}-2D)} \left(\aver{2^j B} |f|^2\, d\mu \right)^{1/2},
\end{equation}
which in particular yields \eqref{eq:ass1}.

\medskip
\noindent {\bf Step 2:} Checking of \eqref{eq:ass2}.

Let us now check \eqref{eq:ass2}, again with $T={\mathcal R}$ and $A_{r^2}=P_{r^2}^{(D)}$, which is 
\begin{equation} \label{eq:ass2-bis} 
\left( \aver{B} |\Gamma L^{-1/2} P_{r^2}^{(D)}f |^{q} d\mu \right)^{\frac{1}{q}} \lesssim  \inf_{x\in B} \ \left[\calM_2( \Gamma L^{-1/2} f)(x) + \calM_2 (f)(x) \right].
\end{equation}
By Lemma \ref{lem} and Remark \ref{rem1}, applied to $g=L^{-1/2} f$ and $M \in \N$ with $M>\frac{1}{2}(\frac{\nu}{p}+1)$, we deduce that
\begin{align} \label{eq:.}
 \left( \aver{B} |\Gamma L^{-1/2}  P_{r^2}^{(D)}f |^{q} d\mu \right)^{\frac{1}{q}} \lesssim \left( \aver{2 B} |\Gamma L^{-1/2}    P_{r^2}^{(D)}f |^{2} d\mu \right)^{\frac{1}{2}} + \sum_{\ell \geq 0} 2^{-\ell N} \left( \aver{2^\ell B} | (r^2L)^{M-1/2} e^{- r^2 L} f |^{p_0} d\mu \right)^{\frac{1}{p_0}}.
\end{align} 
Let us now estimate the two quantities. We get from \eqref{eq:ass1-bis}
\begin{align} \nonumber
& \left(\aver{2B} | \Gamma L^{-1/2} P_{r^2}^{(D)} f |^2 \, d\mu \right)^{1/2} \\ \nonumber
& \qquad  \leq \left(\aver{2 B} | \Gamma L^{-1/2} f |^2 \, d\mu \right)^{1/2} 
+ \left(\aver{2B} | \Gamma L^{-1/2}  (I-P_{r^2}^{(D)}) f |^2 \, d\mu \right)^{1/2}  \\ \nonumber
& \qquad \lesssim \left(\aver{2 B} | \Gamma L^{-1/2} f |^2 \, d\mu \right)^{1/2}  + \inf_{x\in B} \calM_2 (f)(x) \\
& \qquad \lesssim  \inf_{x\in B} \ \left[\calM_2 (\Gamma L^{-1/2} f)(x) + \calM_2 (f)(x) \right].
\label{eq:ass2-P}
\end{align}
Moreover, by covering $2^{\ell}B$ with approximately $2^{\nu \ell}$ balls of radius $r$, we obtain from Lemma \ref{lemma} 
\begin{align*}
\left( \aver{2^\ell B} | (r^2L)^{M-1/2} e^{- \frac{r^2}{2} L} f |^{p_0} d\mu \right)^{\frac{1}{p_0}} & \lesssim 2^{2\nu\ell}  \sup_{Q\supset B} \left(\aver{Q} |f|^2 \, d\mu \right)^{1/2} \\
& \lesssim 2^{2\nu \ell} \inf_{x\in B} \ \calM_2 (f)(x).
\end{align*}
Consequently,
plugging these two last estimates into \eqref{eq:.} with $N$ chosen large enough, we deduce 
\begin{align*}  
\left( \aver{B} |\Gamma L^{-1/2} P_{r^2}^{(D)} f |^{q} d\mu \right)^{1/q} \lesssim \inf_{x\in B} \  \left[\calM_2 (\Gamma L^{-1/2} f)(x) + \calM_2 (f)(x)\right],
\end{align*}
which is \eqref{eq:ass2-bis}. In this way, we obtain \eqref{eq:ass2}, and the proof is complete by Proposition \ref{prop:extrapolation}. To be more precise, Proposition \ref{prop:extrapolation} implies that the Riesz transform is bounded on $L^p(X,\mu)$, for every $p\in(2,q)$ with arbitrary $q\in(2,p_0)$.
\end{proof}

\begin{lemma} \label{lem} Assume the conservation property for $L$ and $\Gamma$ as well as $(G_{p_0})$ and $(P_{p_0})$ for some $p_0 \in [2,p^L)$. Suppose $N>0$, and $M \in \N$ with $M>\frac{1}{2}(\frac{\nu}{p_0}+1)$. If $p_0>2$, suppose $p\in[2,p_0)$. Then for every $g \in \mathcal{D}(L^M)$, every $t>0$ and every ball $B$ of radius $\sqrt{t}$,
$$ \left(\aver{B} |\sqrt{t} \Gamma e^{-tL} g|^{p} \, d\mu \right)^{1/p} \lesssim \left(\aver{2B} |\sqrt{t} \Gamma e^{-tL} g|^{2} \, d\mu \right)^{1/2} + \sum_{\ell \geq 0} 2^{-\ell N} \left(\aver{2^\ell B} | (tL)^M e^{-tL} g |^{p_0}  \, d\mu \right)^{1/p_0}.$$
If $p_0=2$, then there exists $\epsilon>0$ such that the same property holds for every $p\in(2,2+\epsilon)$.
\end{lemma}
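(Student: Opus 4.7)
Write $h := e^{-tL} g$ and $r := \sqrt{t}$, and treat first the case $p_0 > 2$; the restricted case $p_0 = 2$ will follow from the same scheme, Gehring's $\epsilon$-improvement forcing the conclusion to hold only for $p \in (2, 2+\epsilon)$. The strategy is to prove a slightly stronger bound at an auxiliary exponent $q \in (p, p_0)$ close to $p_0$, and then descend to $p$ by H\"older. The proof rests on four ingredients: the Caccioppoli inequality of Proposition \ref{prop:caccioppoli} (to reduce $|\Gamma h|^q$ to $|h|^q$ and $|(r^2L)^M h|^q$ on enlarged balls); the conservation properties of $L$ and $\Gamma$ (which allow one to replace $h$ by $h-c$ for any fixed constant $c$ without disturbing the other terms); the Sobolev--Poincar\'e inequality coming from Keith--Zhong plus Lemma \ref{met} (to return from $|h-c|^q$ to a gradient estimate at the lower integrability $p_0 - \kappa$); and the self-improvement of the right-hand-side exponent in a reverse H\"older inequality (to replace the $L^{p_0-\kappa}$ average on the right by an $L^2$ average).

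Fix $q \in (p, p_0)$ and set $c := \aver{2B} h \, d\mu$. Applying Proposition \ref{prop:caccioppoli} to $f = h - c$ at exponent $q$ and using the conservation properties, one obtains, for any prescribed $N_0 > 0$,
\begin{equation*}
\left(\aver{B} |r \Gamma h|^q \, d\mu\right)^{1/q} \lesssim \sum_{\ell \geq 0} 2^{-\ell N_0} \left[\left(\aver{2^\ell B} |h-c|^q \, d\mu\right)^{1/q} + \left(\aver{2^\ell B} |(r^2L)^M h|^q \, d\mu\right)^{1/q}\right].
\end{equation*}
By \cite{KZ}, $(P_{p_0})$ self-improves to $(P_{p_0-\kappa})$ for some small $\kappa > 0$, and Lemma \ref{met} then upgrades this to the Sobolev--Poincar\'e inequality $(P_{p_0-\kappa,q})$ provided $\nu(\tfrac{1}{p_0-\kappa} - \tfrac{1}{q}) \leq 1$, which can be arranged by taking $q$ close enough to $p_0$. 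Running this bound on each annulus $2^\ell B$ and telescoping to pass from the fixed constant $c$ to the local means $\aver{2^\ell B} h \, d\mu$, one gets, with $F := |r \Gamma h|$,
\begin{equation*}
\left(\aver{B} F^q \, d\mu\right)^{1/q} \lesssim \sum_{\ell \geq 0} 2^{-\ell N_1} \left[\left(\aver{2^{\ell+1} B} F^{p_0-\kappa} \, d\mu\right)^{1/(p_0-\kappa)} + \left(\aver{2^\ell B} |(r^2L)^M h|^q \, d\mu\right)^{1/q}\right],
\end{equation*}
a reverse H\"older-type bound with LHS exponent $q > p_0 - \kappa$.

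Next, invoking the self-improvement of the RHS exponent in reverse H\"older inequalities (\cite[Theorem B.1]{BCF1}, as used in the Remark following Theorem \ref{ac}), the exponent $p_0 - \kappa$ on the right may be lowered down to $2$, modulo a mild enlargement of the outer ball and treating the $(r^2L)^M h$-contribution as a locally constant error on each dyadic annulus. Since $q < p_0$, Jensen raises the integrability of the error term from $L^q$ to $L^{p_0}$, and since $p \leq q$, H\"older lowers the LHS exponent from $q$ to $p$. This produces the stated inequality. In the borderline case $p_0 = 2$, the only change is that the Sobolev--Poincar\'e step requires $\nu(\tfrac{1}{2-\kappa} - \tfrac{1}{q}) \leq 1$, which combined with Gehring's $\epsilon$-gain restricts $q$ (and hence $p$) to a small interval $(2, 2+\epsilon)$.

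The genuinely delicate point is the self-improvement step: the classical RHS-improvement is stated for a single pair of concentric balls, while our reverse H\"older carries both an off-diagonal sum over dyadic annuli and an inhomogeneous error term in $(r^2L)^M h$. Making the improvement go through requires packaging the error into an auxiliary quantity in the spirit of $h := |\Gamma g| + c(f, B_{\sqrt{t}})$ from the proof of Theorem \ref{ac}, so that a scalar-valued reverse H\"older holds uniformly on sub-balls and Gehring can be run at every dyadic scale. The remaining bookkeeping---choice of $q$ versus the Keith--Zhong parameter $\kappa$, compatibility with the Sobolev condition on $\nu$, and the lower bound on $M$ needed to harmlessly dominate $(r^2L)^M h$---is routine given the hypotheses of the lemma.
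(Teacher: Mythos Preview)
Your outline follows the paper's approach closely: Caccioppoli plus conservation, then Keith--Zhong and Sobolev--Poincar\'e to produce a reverse H\"older, then the packaging trick $|\Gamma e^{-tL}g| + c(g,B)$, then Gehring and RHS self-improvement. You also correctly identify, in your last paragraph, that the self-improvement step is where the real work lies. The problem is that you then defer that step and label the role of the hypothesis on $M$ as ``routine bookkeeping''; in fact this is the technical heart of the proof, and without it your argument does not close.

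Concretely: your two displayed inequalities are established only at the fixed ball $B$ of radius $r=\sqrt{t}$. But both Gehring's lemma and the RHS self-improvement theorem from \cite[Appendix B]{BCF1} require the reverse H\"older inequality to hold \emph{uniformly over all sub-balls} $B_\rho \subset B$ with $\rho \leq \sqrt{t}$. Running the same Caccioppoli--Poincar\'e step at such a sub-ball produces an error term
\[
\rho^{-1}\left(\aver{2^\ell B_\rho} |(\rho^2 L)^M e^{-tL}g|^{p_0}\,d\mu\right)^{1/p_0},
\]
not the term $t^{-1/2}(\aver{2^\ell B}|(tL)^M e^{-tL}g|^{p_0})^{1/p_0}$ appearing in the statement. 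Writing $(\rho^2L)^M = (\rho^2/t)^M(tL)^M$ and using doubling to pass from $2^\ell B_\rho$ to $2^\ell B$, the conversion costs a factor
\[
\left(\frac{\rho}{\sqrt{t}}\right)^{2M-1}\left(\frac{\sqrt{t}}{\rho}\right)^{\nu/p_0},
\]
and this is uniformly bounded over $\rho\in(0,\sqrt{t}]$ precisely when $M > \tfrac{1}{2}(\tfrac{\nu}{p_0}+1)$. This is the \emph{only} place the lower bound on $M$ is used, and it is exactly what allows the error to be absorbed into a single constant $c(g,B)$ independent of the sub-ball scale, so that the auxiliary function $|\Gamma e^{-tL}g| + c(g,B)$ satisfies a clean reverse H\"older on every sub-ball and Gehring can be applied. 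You should carry this computation out explicitly rather than asserting it; once you do, the rest of your sketch goes through and matches the paper's proof.
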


\begin{rem} \label{rem1} As we will see in the proof, the operators $\sqrt{t} \Gamma e^{-tL}$ in the LHS and RHS can be replaced by $\sqrt{t} \Gamma P_t^{(D)}$ for any integer $D$.
 \end{rem}

\begin{proof} We only consider the case $p_0>2$. The case $p_0=2$ can be treated similarly with $p_0=p=2$, since we already have Davies-Gaffney estimates \eqref{eq:DG} by assumption.
Let $g \in \mathcal{D}(L^M)$, let $B$ be a ball of radius $\sqrt{t}$. 
From Proposition \ref{prop:caccioppoli} with the conservation property, we then have for every $r \in (0,\sqrt{r})$ and every ball $B_r \subset B$
\begin{align*}
\left(\aver{B_r} |\Gamma e^{-tL} g|^{p} \,  d\mu \right)^{1/p} &\lesssim \sum_{\ell \geq 0} 2^{-\ell \tilde N} \left[r^{-1} \left(\aver{2^\ell B_r} \left|e^{-tL} g - \left(\aver{B_r} e^{-tL} g d\mu \, \right) \right|^{p} \,  d\mu \right)^{1/p} \right. \\
& \qquad \qquad \qquad +  \left. r^{-1} \left( \aver{2^\ell B_r} |(r^2 L)^Me^{-tL}g|^{p} \,  d\mu \right)^{1/p}\right],
\end{align*}
where $M \in \N$, and $\tilde{N}>0$ can be chosen arbitrarily large. 
Using $(P_{p_0})$, which self-improves into $(P_{p_0-\kappa})$ for $\kappa$ small enough (see \cite{KZ}), and Lemma \ref{met}, we deduce that for $\kappa>0$ small enough and $p\in(p_0-\kappa,p_0)$, we have
\begin{align*}
\frac{1}{r} \left(\aver{2^\ell B_r} \left|e^{-tL} g - \left(\aver{B_r} e^{-tL} g\,d\mu \right) \right|^{p} \,  d\mu \right)^{1/p} & \lesssim \sum_{j=0}^{\ell} \frac{1}{r} \left(\aver{2^j B_r} \left|e^{-tL} g - \left(\aver{2^jB_r} e^{-tL} g \,d\mu \right) \right|^{p} \,  d\mu \right)^{1/p} \\
& \lesssim \sum_{j=0}^{\ell} 2^j \left(\aver{2^j B_r} |\Gamma e^{-tL} g|^{p_0-\kappa} \, d\mu \right)^{1/p_0-\kappa}.
\end{align*}
We thus obtain, up to changing $\tilde N$ to some other parameter $N$ (but which can still be chosen arbitrarily large), for every $r\in(0,\sqrt{t})$ and every ball $B_r \subset B$ 
\begin{align}
\label{gg}\left(\aver{B_r} |\Gamma e^{-tL} g|^{p} \,  d\mu \right)^{1/p} 
\lesssim &\sum_{\ell \geq 0} 2^{-\ell  N} \left[\left(\aver{2^\ell B_r} |\Gamma e^{-tL} g|^{p_0-\kappa} \,  d\mu \right)^{1/(p_0-\kappa)} \right.\\ \nonumber
& \qquad \qquad  +  \left. r^{-1} \left( \aver{2^\ell B_r} |(r^2 L)^Me^{-tL}g|^{p_0} \,  d\mu \right)^{1/p_0}\right].
\end{align}
Now consider the function
$$ h := |\Gamma e^{-tL} g| + c(g,B),$$
where 
$$ c(g,B):=t^{-1/2}\sum_{\ell \geq 0} 2^{-\ell  N} \left(\aver{2^\ell B} |(tL)^M e^{-tL} g|^{p_0} \, d\mu \right)^ {1/p_0}$$
is a constant. 
 We have by using \eqref{dnu} and $r\leq \sqrt{t}$
\begin{align*}
r^{-1} \left( \aver{2^\ell B_r} |(r^2L)^Me^{-tL}g|^{p_0} \,  d\mu \right)^{1/p_0} 
 & \lesssim \left(\frac{r}{\sqrt{t}}\right)^{2M-1}  \left(\frac{\sqrt{t}}{r}\right)^{\nu/p_0} t^{-1/2}
  \left( \aver{2^\ell B} |(tL)^M e^{-tL}g|^{p_0} \,  d\mu \right)^{1/p_0} \\
  &\lesssim  t^{-1/2}
  \left( \aver{2^\ell B} |(tL)^M e^{-tL}g|^{p_0} \,  d\mu \right)^{1/p_0},
 \end{align*}
 choosing $M>\frac{1}{2}(\frac{\nu}{p_0}+1)$.
Therefore  \eqref{gg} yields 
\begin{eqnarray*}
 \left(\aver{B_r} |\Gamma e^{-tL} g|^{p} \,d\mu \right)^{1/p}  &\lesssim& \sum_{\ell \geq 0} 2^{-\ell N} \left(\aver{2^\ell B_r} |\Gamma e^{-tL} g|^{p_0-\kappa} \,  d\mu \right)^{1/(p_0-\kappa)}    +  c(g,B).
\end{eqnarray*}
Since $c(g,B)$ is a constant (and it is also equal to any of its average), it follows that
\begin{equation} \label{eq:ouf}
\left(\aver{B_r} h^{p} \,d\mu \right)^{1/p} \lesssim \sum_{\ell \geq 0} 2^{-\ell N} \left(\aver{2^\ell B_r} h^{p_0-\kappa} \,  d\mu \right)^{1/(p_0-\kappa)}.  
\end{equation}
Now for $x\in 2B$, we consider the quantities
$$ F_{p}(x):=\sup_{x\in B_r\subset 2 B} \left(\aver{B_r} h^{p} \, d\mu \right)^{1/p}$$
where the supremum is taken over all the balls $B_r$ containing $x$ and included into $2B$ and
$$ F_{p_0-\kappa}(x):=\sup_{x\in B_r } \left(\aver{B_r} h^{p_0-\kappa} \, d\mu \right)^{1/(p_0-\kappa)}:={\mathcal M}_{p_0-\kappa}[h](x),$$
We have proved that for every $x\in 2B$
$$ F_{p}(x) \lesssim F_{p_0-\kappa}(x).$$
We may then apply Gehring's Lemma (see Theorem \ref{thm:gehring}), and we deduce that for some $\eps>0$
$$ \left(\aver{B} |h|^{p+\epsilon} d\mu \right)^{1/(p+\epsilon)} \lesssim \left(\aver{2B} |h|^{p} d\mu \right)^{1/p}.$$
By the self-improvement of reverse H\"older inequality (see \cite[Appendix B]{BCF1}), we deduce that
$$ \left(\aver{B} |h|^{p+\epsilon} d\mu \right)^{1/p} \lesssim \left(\aver{2B} |h|^{2} d\mu \right)^{1/2}.$$
In particular due to the definition of $h$, we have 
\begin{align*}
 \left(\aver{B} |\Gamma e^{-tL}g|^{p+\epsilon} d\mu \right)^{1/p} & \lesssim \left(\aver{2 B} |\Gamma e^{-tL}g|^{2} d\mu \right)^{1/2}+c(g,B),
  \end{align*}
which is the desired inequality.
\end{proof}

\section{Boundedness of Riesz transforms under elliptic perturbation} \label{sec:appli}

Consider $(M,d,\mu)$ a doubling Riemannian manifold (satisfying \eqref{dnu}), equipped with the Riemannian gradient $\nabla$ and its divergence operator $\dive = \nabla^*$, such that the self-adjoint Laplacian is given by $\Delta:=\dive \nabla$ and for every $f,g \in {\mathcal D}(\Delta)$ we have
$$ \int_M f \Delta(g) \, d\mu = \int_M \langle \nabla f(x), \nabla g(x) \rangle_{T_xM} d\mu(x),$$
where $T_xM$ is the tangent space of $M$ at $x$.

Let $A = A(x)$ be a complex matrix - valued function, defined on $M$ and satisfying the ellipticity (or accretivity) condition
\begin{equation} \lambda |\xi|^ 2 \leq  \Re \langle A(x) \xi, \xi \rangle \qquad \textrm{and} \qquad  |\langle A(x)\xi , \zeta\rangle | \leq \Lambda |\xi||\zeta|, \label{eq:ell} \end{equation}
for some constants $\lambda,\Lambda>0$ and every $x\in M$, $\xi,\zeta\in T_xM$. 

To such a complex matrix valued function $A$, we may define a second order divergence form operator
$$ L=L_A f :=- \dive (A\nabla f),$$
which we first interpret in the sense of maximal accretive operators via a
ses-quilinear form. That is, ${\mathcal D}(L)$ is the largest subspace contained in $W^{1,2}:={\mathcal D}(\nabla)$
for which 
$$ \left| \int_M \langle A \nabla f, \nabla g\rangle \, d\mu \right| \leq C \|g\|_2 \qquad \forall g \in W^{1,2},$$ 
and we define $Lf$ by
$$ \langle Lf, g \rangle = \int_M \langle A \nabla f, \nabla g\rangle \, d\mu$$
for $f \in {\mathcal D}(L)$ and $g \in W^{1,2}$. Thus defined, $L=L_A$ is a maximal-accretive operator
on $L^2$ and ${\mathcal D}(L)$ is dense in $W^{1,2}$.

Let us motivate such considerations. Consider on the manifold $(M,d,\mu)$ two complete Riemannian metrics $G_o = \langle, \rangle_o$ and $G = \langle,\rangle$.  Assume that these two metrics are quasi-isometric in the sense that the associated
lengths $|\ |$ and $|\ |_0$ on tangent vectors satisfy
$$ c_1 |v|_{T_xM} \leq  |v|_{0,T_xM} \leq c_2 |v|_{T_xM}$$
for some numerical constants $c_1,c_2$ and every $x \in M$ and tangent vector $v\in T_xM$. This implies (see \cite{Barbatis} and \cite[Section 4]{CoDu}) that there exists an automorphism $A = (A(x))_{x\in M}$ of the tangent bundle $TM$ such that for every $x\in M$ and $u,v\in T_xM$
$$ \langle A(x)u, v \rangle = \langle u, v\rangle_0.$$
Moreover, the matrix-valued map $A$ satisfies the ellipticity condition \eqref{eq:ell}.
So there is an equivalence between the two points of view: to perturb the Laplace operator through the map $A$ and to perturb the considered Riemannian metric on the manifold.

\bigskip

We denote by $RH_{q}(\Delta)$ and $RH_{q}(L)=RH_{q}(L_A)$ the reverse H\"older inequality property for harmonic functions associated respectively with the Laplace operator $\Delta$, or with the perturbed operator $L=L_A$.
For $A$ and $p\in(2,\infty)$, we set $(G_{p,A})$ and $(R_{p,A})$ for the $L^p$-boundedness property of the gradient of the semigroup or the Riesz transforms associated with the operator $L=L_A$.

Let us also point out that in such a context it is well-known that (even if $A$ is non self-adjoint) Lemma \ref{lem:LM} holds.

\begin{proposition} \label{prop:RHbis} Let $(M,d,\mu)$ be a doubling Riemannian manifold with \eqref{FKR}.
Let us assume $RH_{q}(\Delta)$ for some $q>2$. Then there exists $\epsilon_0>0$ such that for every $p\in(2,2+\epsilon_0)$ and every $A$ with $ \| A-\textrm{Id}\|_\infty \leq \epsilon_0$, Property $RH_{p}(L_A)$ holds.
\end{proposition}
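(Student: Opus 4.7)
The approach is a classical Giaquinta-type perturbation argument: I will approximate an $L_A$-harmonic function locally by a $-\Delta$-harmonic one, apply $RH_q(\Delta)$ to the approximation, and then absorb the resulting error via a Gehring-type self-improvement. Fix a ball $B_0$ and let $u \in \mathcal{F}$ be $L_A$-harmonic on $2B_0$. For every sub-ball $B$ with $4B \subset 2B_0$, the plan is to decompose $u = v + w$ on $2B$ with $v$ being $-\Delta$-harmonic on $2B$ and $w = u - v \in \mathcal{F}$ supported in $2B$. This is the output of Lemma \ref{lem:LM-ter} applied to $L = -\Delta$ and $\Gamma = |\nabla|$, which trivially satisfy \eqref{eq:RR2} via $\langle -\Delta f, g\rangle = \int \nabla f \cdot \overline{\nabla g} \, d\mu$; equivalently, $v$ may be obtained by minimising the Dirichlet energy $\varphi \mapsto \int_{2B} |\nabla(u+\varphi)|^2 \, d\mu$ over $\varphi \in \mathcal{F}$ supported in $2B$, which also handles the case $u \notin \mathcal{D}(-\Delta)$ where Lemma \ref{lem:LM-ter} is not directly applicable.

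Using $w$ as a test function in both harmonicity identities and splitting $A\nabla u = \nabla u + (A-\textrm{Id})\nabla u$, one finds
\[
\int |\nabla w|^2 \, d\mu = \int \nabla w \cdot \nabla u \, d\mu - \int \nabla w \cdot \nabla v \, d\mu = -\int \nabla w \cdot (A - \textrm{Id})\nabla u \, d\mu,
\]
so Cauchy--Schwarz combined with $\|A - \textrm{Id}\|_\infty \leq \epsilon_0$ yields the key smallness estimate $\|\nabla w\|_{L^2(2B)} \leq \epsilon_0 \|\nabla u\|_{L^2(2B)}$. I then apply $RH_q(\Delta)$ to $v$ on $2B$ in its self-improved $L^1$-to-$L^q$ form (Remark \ref{rem:rh}) and combine with $|\nabla u|^2 \lesssim |\nabla v|^2 + |\nabla w|^2$ and $|\nabla v| \leq |\nabla u| + |\nabla w|$ to reach, for every admissible sub-ball $B$,
\[
\left(\aver{B}|\nabla u|^2 \, d\mu\right)^{1/2} \leq C_1 \aver{2B} |\nabla u| \, d\mu + C_1 \epsilon_0 \left(\aver{2B}|\nabla u|^2 \, d\mu\right)^{1/2},
\]
with $C_1$ depending only on the doubling constant and the $RH_q(\Delta)$ constant.

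For $\epsilon_0$ small enough (depending on $C_1$), a standard Giaquinta--Modica absorption argument (see e.g.\ Giusti, \emph{Direct Methods in the Calculus of Variations}, Proposition~6.1) absorbs the $\epsilon_0$ term and produces a genuine reverse H\"older inequality
\[
\left(\aver{B}|\nabla u|^2 \, d\mu\right)^{1/2} \leq C_2 \aver{\lambda B}|\nabla u| \, d\mu
\]
for some $\lambda \in (1,2)$ and every admissible sub-ball. Theorem \ref{thm:gehring} applied to $f = |\nabla u|$ with $q = 2$ and $p = 1$ then produces some $\epsilon > 0$ such that $(\aver{B}|\nabla u|^{2+\epsilon}\,d\mu)^{1/(2+\epsilon)} \lesssim (\aver{2B}|\nabla u|^2\,d\mu)^{1/2}$, which is exactly $RH_{2+\epsilon}(L_A)$. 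Taking the final $\epsilon_0$ in the statement as the minimum of the perturbation threshold and this Gehring parameter covers the entire range $p \in (2, 2+\epsilon_0)$.

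The hard part will be the Giaquinta--Modica absorption step: while the individual ingredients are elementary, carrying out the iteration across nested balls in a general doubling metric-measure setting (without Euclidean scaling) requires some care, and the threshold $\epsilon_0$ must be calibrated simultaneously against the doubling constant, $C_1$, and the Gehring exponent $\epsilon$. This is where the smallness assumption on $A - \textrm{Id}$ is essentially used; the rest of the argument depends on $A$ only through boundedness and ellipticity.
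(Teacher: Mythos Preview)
Your proposal is correct and essentially identical to the paper's proof: the same $\Delta$-harmonic approximation on each sub-ball, the same smallness estimate $\|\nabla w\|_{L^2(2B)} \leq \epsilon_0\|\nabla u\|_{L^2(2B)}$ obtained by testing both harmonicity identities against $w$, the same use of $RH_q(\Delta)$ in its $L^1$--$L^q$ form (Remark~\ref{rem:rh}), and Gehring at the end. For the absorption step you flag as the hard part, the paper bypasses a radius-type iteration altogether and instead takes a Whitney covering $(Q^j)_j$ of a generic sub-ball $B_1 \subset B_0$, applies the intermediate inequality on each $Q^j$, and sums using bounded overlap of the $(2Q^j)_j$; this turns the $\epsilon_0$-term into $C\epsilon_0^2 \aver{B_1}|\nabla u|^2\,d\mu$, which absorbs on the left for $\epsilon_0$ small and yields directly the localised maximal inequality $\bigl(\aver{B_1}|\nabla u|^2\,d\mu\bigr)^{1/2} \lesssim \sup_{B\subset B_1}\aver{B}|\nabla u|\,d\mu$ required by Theorem~\ref{thm:gehring}.
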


\begin{proof}
Consider a ball $B_0$ and a function $f\in {\mathcal D}(L_A)$, $L_A$-harmonic on $2B_0$. Then using Lemma \ref{lem:LM} for every ball $B\subset B_0$, there exists $u\in W^{1,2}$ such that $f-u\in W^{1,2}$ is supported in the ball $2B$ and $u$ is $\Delta$-harmonic in $2B$. Moreover, by repeating the argument employed for Lemma \ref{lem:LM}, we have
\begin{align*}
\aver{B} |\nabla(f-u)|^2 \, d\mu & = \frac{1}{|B|}\int \langle  \nabla (f-u), \nabla (f-u) \rangle \, d\mu \\
 & = \frac{1}{|B|}\int_M \langle  \nabla (f-u), \nabla f \rangle \, d\mu,
\end{align*}
where we have used the $\Delta$-harmonicity of $u$ on $2B$ (containing the support of $f-u$).
Then using the $L_A$-harmonicity of $f$ on $2B$, we deduce
\begin{align*}
\aver{B} |\nabla(f-u)|^2 \, d\mu & = \frac{1}{|B|}\int_M \langle (\textrm{Id}-A) \nabla (f-u), \nabla f \rangle \, d\mu \\
 & \leq \|A-\textrm{Id}\|_\infty \left(\aver{B} |\nabla(f-u)|^2 \, d\mu\right)^{1/2} \left( \aver{B} |\nabla f|^2 \, d\mu \right)^{1/2}.
\end{align*}
Hence,
\begin{align*}
\left(\aver{B} |\nabla(f-u)|^2 \, d\mu\right)^{1/2} & \leq \|A-\textrm{Id}\|_\infty \left( \aver{B} |\nabla f|^2 \, d\mu \right)^{1/2}.
\end{align*}
Then
\begin{align*}
 \left(\aver{B} |\nabla f |^{2} \, d\mu \right)^{1/2} & \leq \left(\aver{B} |\nabla u |^{2} \, d\mu \right)^{1/2} + \left(\aver{B} |\nabla (f-u) |^{2} \, d\mu \right)^{1/2} \\
 & \leq \left(\aver{B} |\nabla u |^q \, d\mu \right)^{1/q} + C_\nu \left(\aver{2B} |\nabla (f-u) |^{2} \, d\mu \right)^{1/2} \\
 & \leq C_{0} \left(\aver{2B} |\nabla u | \, d\mu \right) + C_\nu \|A-\textrm{Id}\|_\infty \left( \aver{2B} |\nabla f|^2 \, d\mu \right)^{1/2},
\end{align*}
where $C_0$ is the implicit constant in $RH_{q}(\Delta)$ (which self-improves into a $L^1-L^q$ reverse H\"older inequality, see Remark \ref{rem:rh}) and $C_\nu$ is a constant only depending on the doubling condition \eqref{dnu}.
Consequently, 
\begin{align*}
 \left(\aver{B} |\nabla f |^{2} \, d\mu \right)^{1/2} & \leq C_{0} \left(\aver{2B} |\nabla f | \, d\mu \right) + C_{0} \left(\aver{2B} |\nabla (f-u) |^2 \, d\mu \right)^{1/2} \\
 & \qquad + C_\nu \|A-\textrm{Id}\|_\infty \left( \aver{2B} |\nabla f|^2 \, d\mu \right)^{1/2} \\
  & \leq C_{0} \left(\aver{2B} |\nabla f | \, d\mu \right) + (C_{0} + C_\nu) \|A-\textrm{Id}\|_\infty \left( \aver{2B} |\nabla f|^2 \, d\mu \right)^{1/2}.
\end{align*}
Then consider $\epsilon_1=\eta (C_{0} + C_\nu)^{-1}$ (for a small parameter $\eta$, to be fixed later) and assume that $\|A-\textrm{Id}\|_\infty \leq \epsilon_1$. We obtain
\begin{align}
 \left(\aver{B} |\nabla f |^{2} \, d\mu \right)^{1/2} & \leq C_{0} \left(\aver{2B} |\nabla f | \, d\mu \right) + \eta \left( \aver{2B} |\nabla f|^2 \, d\mu \right)^{1/2}. \label{eq:eqboule}
\end{align}
The previous inequality holds for every balls $B\subset B_0$. 

Then let us fix a ball $B_1 \subset B_0$ and consider $(Q^j)_j$ a Whitney covering of $B_1$, which is a collection of balls satisfying in particular
 $$ \textrm{$(Q^j)_j$ is a covering of $B_1$ and $(2Q^j)_j$ is also a bounded covering of $B_1$.} $$
So we have
$$  \aver{B_1} |\nabla f |^{2} \, d\mu \leq \frac{1}{|B_1|} \sum_j |Q^j| \left(\aver{Q^j} |\nabla f|^2 \, d\mu\right)$$
and by applying \eqref{eq:eqboule} to each $Q^j$, we get
$$  \aver{B_1} |\nabla f |^{2} \, d\mu \leq 2 C_0^2 \frac{1}{|B_1|} \sum_j |Q^j| \left(\aver{2Q^j} |\nabla f | \, d\mu \right)^2 + \frac{2\eta^2}{|B_1|} \sum_j |Q^j| \left(\aver{2Q^j} |\nabla f|^2 \, d\mu\right).$$
Since $(2Q^j)_j$ is also a bounded covering of $B_1$, this implies, with $C_1,C_2$ numerical constants,
$$  \aver{B_1} |\nabla f |^{2} \, d\mu \leq C_1 \sup_{B\subset B_1} \left(\aver{B} |\nabla f | \, d\mu \right)^2 + C_2 \eta^2 \left(\aver{B_1} |\nabla f|^2 \, d\mu\right).$$
We then choose $\eta$ small enough such that $\eta^2<\frac{1}{2C_2}$. Hence
$$  \aver{B_1} |\nabla f |^{2} \, d\mu \leq C_1 \sup_{B\subset B_1} \left(\aver{B} |\nabla f | \, d\mu \right)^2 + \frac{1}{2} \left(\aver{B_1} |\nabla f|^2 \, d\mu\right),$$
which yields
$$  \left(\aver{B_1} |\nabla f |^{2} \, d\mu\right)^{1/2}  \leq \sqrt{2C_1} \sup_{B\subset B_1} \left(\aver{B} |\nabla f | \, d\mu \right).$$
This holds uniformly for every ball $B_1\subset B_0$, in particular, we have the following maximal inequality
$$ \sup_{B_1 \subset B_0} \left(\aver{B_1} |\nabla f |^{2} \, d\mu\right)^{1/2}  \leq \sqrt{2C_1} \sup_{B\subset B_1} \left(\aver{B} |\nabla f | \, d\mu \right).$$
We may then apply Gehring's result \cite{Gehring} (Theorem \ref{thm:gehring}), which yields that there exits $\epsilon_2$ such that
$$  \left(\aver{B_0} |\nabla f |^{q+\epsilon_2} \, d\mu\right)^{1/(2+\epsilon_2)}  \lesssim \left(\aver{2B_0} |\nabla f |^{2} \, d\mu\right)^{1/2},$$
with an implicit constant which is independent of $f$ and $B_0$. The proof is complete by choosing $\epsilon_0:=\min(\epsilon_1,\epsilon_2)$.
\end{proof}

\begin{proposition} \label{prop:RHGbis} Let $(M,d,\mu)$ be a doubling Riemannian manifold with \eqref{FKR} and $RH_{q}(\Delta)$ for some $q>2$. Then there exists $\epsilon_1>0$ such that for every $p\in(2,2+\epsilon_1)$ and every $A$ with $ \| A-\textrm{Id}\|_\infty \leq \epsilon_1$, Property $(G_{p,A})$ holds whenever $p^L>\nu$.
\end{proposition}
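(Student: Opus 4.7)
The plan is to first transfer the reverse Hölder property from $-\Delta$ to $L_A$ via Proposition \ref{prop:RHbis}, and then use a Gehring-type self-improvement argument to upgrade $(G_{2,A})$ (which is immediate from $L^2$ analyticity of the semigroup) to $(G_{p,A})$ for some $p > 2$. First, Proposition \ref{prop:RHbis} provides $\epsilon_0 > 0$ such that $RH_q(L_A)$ holds for every $q \in (2, 2+\epsilon_0)$ whenever $\|A-\textrm{Id}\|_\infty \leq \epsilon_0$. Fix such a $q$.

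For $f \in L^2(M,\mu)$, $t > 0$ and $g := e^{-tL_A}f$, the key step would be to establish a local reverse Hölder-type inequality for $|\nabla g|$. For any sub-ball $B = B_s \subset B_{\sqrt{t}}$, I would apply Lemma \ref{lem:LM} (which, as noted prior to Proposition \ref{prop:RHbis}, remains valid for the non-self-adjoint $L_A$ in the Riemannian setting): there exists $u \in W^{1,2}$ with $g - u$ supported in $B$, $u$ being $L_A$-harmonic on $B$, and satisfying the gradient estimates \eqref{eq:grad}. Applying $RH_q(L_A)$ to $u$ on $B/2$ followed by the $L^1$-self-improvement of the RHS (Remark \ref{rem:rh}) yields
$$\left(\aver{B/2}|\nabla u|^2 \,d\mu\right)^{1/2} \leq \left(\aver{B/2}|\nabla u|^q \,d\mu\right)^{1/q} \lesssim \aver{B}|\nabla u|\,d\mu.$$
Combining this with \eqref{eq:grad} via the triangle inequality in $\nabla g = \nabla u + \nabla(g-u)$ leads to the target inequality
$$\left(\aver{B/2}|\nabla g|^2 \,d\mu\right)^{1/2} \lesssim \aver{B}|\nabla g|\,d\mu + s\left(\aver{B}|L_A g|^2 \,d\mu\right)^{1/2}.$$

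Next, the assumption $p^L > \nu$ allows me to fix some $p_* \in (\nu, p^L)$; rewriting $sL_A g = (s/t)(tL_A)e^{-tL_A}f$ and using Jensen's inequality, volume doubling, and Lemma \ref{lemma} at scale $\sqrt{t}$ with $K = 1 > \nu/(2p_*)$, the error term above can be absorbed into a ball-constant $c(f, B_{\sqrt{t}}) := t^{-1/2}\inf_{x \in B_{\sqrt{t}}}\calM_2(f)(x)$. Setting $h := |\nabla g| + c(f, B_{\sqrt{t}})$, the previous display becomes a standard reverse Hölder inequality $\left(\aver{B/2}h^2 \,d\mu\right)^{1/2} \lesssim \aver{B}h\,d\mu$ uniform over sub-balls $B \subset B_{\sqrt{t}}$ (the case of balls of radius comparable to $\sqrt{t}$ being treated directly via $(G_{2,A})$ and Davies--Gaffney estimates). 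Gehring's theorem (Theorem \ref{thm:gehring}) then produces some $\eps > 0$ for which
$$\left(\aver{B_{\sqrt{t}}}h^{2+\eps} \,d\mu\right)^{1/(2+\eps)} \lesssim \aver{2B_{\sqrt{t}}}h\,d\mu \lesssim \inf_{x \in B_{\sqrt{t}}}\calM_2(f)(x),$$
where the final estimate comes from the $L^2$-$L^2$ Davies--Gaffney off-diagonal bounds of $\sqrt{t}\nabla e^{-tL_A}$. Multiplying by $\sqrt{t}$, summing over a covering of $M$ by balls of radius $\sqrt{t}$, and invoking the $L^{2+\eps}$-boundedness of $\calM_2$ delivers $(G_{p,A})$ for $p = 2 + \eps$.

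The main obstacle I anticipate is that $g = e^{-tL_A}f$ is not $L_A$-harmonic, so the reverse Hölder property cannot be applied to $g$ directly. Lemma \ref{lem:LM} reduces this difficulty to controlling the remainder $s\|L_A g\|_{L^2(B)}$, where the hypothesis $p^L > \nu$ enters decisively: it makes first-order cancellation ($K = 1$) suffice in Lemma \ref{lemma} at the exponent $p_* > \nu$, so the remainder indeed behaves like a harmless maximal-function constant uniformly across all sub-scales $s \leq \sqrt{t}$.
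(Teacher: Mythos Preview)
Your proposal is correct and follows essentially the same route as the paper: decompose $e^{-tL_A}f$ on each sub-ball into an $L_A$-harmonic part via Lemma~\ref{lem:LM}, apply $RH_{\tilde q}(L_A)$ (obtained from Proposition~\ref{prop:RHbis}) to the harmonic piece, control the remainder $s\bigl(\aver{B}|L_A e^{-tL_A}f|^2\,d\mu\bigr)^{1/2}$ uniformly over sub-scales using $p^L>\nu$, then apply Gehring and sum over a covering. The only cosmetic differences are that the paper re-derives the gradient bound \eqref{eq:grad} in place (via accretivity and Faber--Krahn) rather than citing it, and handles the error term by a direct off-diagonal computation equivalent to your appeal to Lemma~\ref{lemma}; also note your displayed Gehring conclusion is missing a factor $t^{-1/2}$ on the right, which your subsequent ``multiplying by $\sqrt{t}$'' then correctly absorbs.
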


\begin{proof} Let $t>0$ and let $B_0$ be a ball of radius $\sqrt{t}$. Denote $L:=L_A$. Then for every $f\in L^2$ and every sub-ball $B\subset B_0$ of radius $r\leq \sqrt{t}$, we have
$$ \left( \aver{B} | \nabla e^{-tL} f |^2 \, d\mu \right)^{1/2} \leq \left( \aver{B} | \nabla u |^2 \, d\mu \right)^{1/2} + \left( \aver{B} | \nabla (u-e^{-tL} f) |^2 \, d\mu \right)^{1/2}$$
where (according to Lemma \ref{lem:LM}) $u$ is a function $L$-harmonic on $2B$ such that $e^{-tL}f -u$ is supported on $2B$.
According to the previous proposition, we know that if $\epsilon_0>0$ is small enough and $ \| A-\textrm{Id}\|_\infty \leq \epsilon_0$, then $RH_{q}(\Delta)$ implies $RH_{\tilde q}(L_A)$ (for some $\tilde q\in(2,2+\epsilon_0)$) and so by Remark \ref{rem:rh} 
\begin{align*} \left( \aver{B} | \nabla u |^2 \, d\mu \right)^{1/2} & \lesssim \left( \aver{2B} | \nabla u | \, d\mu \right) \\
& \lesssim \left( \aver{2B} | \nabla e^{-tL} f | \, d\mu \right) + \left( \aver{2B} | \nabla (e^{-tL}f -u) |^2 \, d\mu \right)^{1/2}.
\end{align*}
Hence, 
$$ \left( \aver{B} | \nabla e^{-tL} f |^2 \, d\mu \right)^{1/2} \lesssim \left( \aver{2B} | \nabla e^{-tL} f | \, d\mu \right) + \left( \aver{2B} | \nabla (e^{-tL}f -u) |^2 \, d\mu \right)^{1/2}.$$
For the second term, we get, using the $L$-harmonicity of $u$ with the support of $e^{-tL}f-u$ and the accretivity of the matrix-valued map $A$,
\begin{align*}
\int_{2B} |  \nabla (e^{-tL}f -u) |^2 \, d\mu & \lesssim \int_{2B} |  A \nabla (e^{-tL}f -u) |^2 \, d\mu = \int_{2B} (e^{-tL}f -u) L (e^{-tL}f -u) \, d\mu \\
 & \lesssim \int_{2B} (e^{-tL}f -u) Le^{-tL}f \, d\mu \\
 & \lesssim \left(\int_{2B} |e^{-tL}f -u|^2 \, d\mu\right)^{1/2} \left(\int_{2B} |Le^{-tL} f|^2 \, d\mu \right)^{1/2} \\
 & \lesssim \left(\int_{2B} |\nabla (e^{-tL}f -u)|^2 \, d\mu\right)^{1/2} r \left(\int_{2B} |Le^{-tL} f|^2 \, d\mu \right)^{1/2},
\end{align*}
where we used the Faber-Krahn inequality \eqref{FKR} in the last step.
We deduce 
$$ \left( \aver{2B} | \nabla (e^{-tL}f -u) |^2 \, d\mu \right)^{1/2} \lesssim \left(\aver{2B} |\sqrt{t} Le^{-tL} f|^2 \, d\mu \right)^{1/2}.$$
Since $p^L>\nu$ and $r\leq \sqrt{t}$, the doubling property \eqref{dnu} yields for some $q\in(\nu,p^L)$
\begin{align*} 
r \left(\aver{2B} |\sqrt{t} Le^{-tL} f|^2 \, d\mu \right)^{1/2} & \lesssim \left(\frac{r}{\sqrt{t}}\right) \left(\aver{2B} |\sqrt{t} Le^{-tL} f|^{q} \, d\mu \right)^{1/q} \\
 & \lesssim \left(\frac{r}{\sqrt{t}}\right) \left(\frac{\sqrt{t}}{r}\right)^{\nu/q} \left(\aver{2B_0} |\sqrt{t} Le^{-tL} f|^{q} \, d\mu \right)^{1/q} \\
 & \lesssim \left(\aver{2B_0} |\sqrt{t} Le^{-tL} f|^{q} \, d\mu \right)^{1/q}.
\end{align*}
Consequently for every $p\in(p_L,2)$ (due to $L^p$-$L^q$ off-diagonal estimates of $tLe^{-tL}$ at the scale $\sqrt{t}$ by analyticity), we have
\begin{align*}
 \left( \aver{B} | \sqrt{t} \nabla e^{-tL} f |^2 \, d\mu \right)^{1/2} \lesssim \left( \aver{B} | \sqrt{t} \nabla e^{-tL} f | \, d\mu \right) + \inf_{x\in B_0} {\mathcal M}_p[f](x).  
\end{align*}
The last term is a constant and independent of the generic ball $B \subset B_0$.
We may now apply Gehring's result (see Theorem \ref{thm:gehring}), which yields that there exists $\eta>0$ (independent on $t>0$, $B_0$ and $f$) such that 
\begin{align*}
 \left( \aver{B_0} | \sqrt{t} \nabla e^{-tL} f |^{2+\eta} \, d\mu \right)^{1/(2+\eta)} \lesssim \left( \aver{2B_0} | \sqrt{t} \nabla e^{-tL} f |^2 \, d\mu \right)^{1/2} + \left( \aver{2B_0} | {\mathcal M}_p[f] |^2 \, d\mu \right)^{1/2}.  
\end{align*}
Using the $L^2$ Davies-Gaffney estimates for $L=L_A$ (see for example \cite[Proposition 2.1]{memoirs} for the classical perturbation argument), we have
\begin{align*}
 \left( \aver{B_0} | \sqrt{t} \nabla e^{-tL} f |^{2+\eta} \, d\mu \right)^{1/(2+\eta)} \lesssim \left( \aver{2B_0}  {\mathcal M}_2[f]^{2+\eta} \, d\mu \right)^{1/(2+\eta)}.  
\end{align*}
The implicit constant in this last inequality as well as $\eta$ are independent of the initial ball $B_0$ of radius $\sqrt{t}$.
Then by covering the whole ambiant space with a collection of balls of radius $\sqrt{t}$ such that the double balls have a bounded overlap, we deduce that
\begin{align*}
 \| \sqrt{t} \nabla e^{-tL} f \|_{2+\eta} \lesssim  \|f\|_{2+\eta},  
\end{align*}
where $\eta$ and the implicit constant are independent of $t>0$.
That corresponds to $(G_{2+\eta,A})$. We conclude the proof by choosing $\epsilon_1:=\min(\eta,\epsilon_0)$.
\end{proof}

In order to check the condition $p^L>\nu$, we may use characterisations in terms of Sobolev inequalities.
For $q>2$, the scale-invariant local Sobolev inequality $(LS_q)$ holds if
\begin{equation*}\label{Sq}
\tag{$LS_q$} \|f \|_{q}^2\lesssim \frac{1}{|B|^{1-\frac{2}{q}}}\left(\|f\|^2_2+r^2 \||\nabla f| \|_2^2\right),
\end{equation*}
for every ball $B$ of radius $r>0$,  every $f\in\mathcal{F}$ supported in $B$.
This inequality was introduced in \cite{S} and was shown, under \eqref{d}, to be equivalent  to the upper Gaussian estimates for the heat kernel \eqref{due} and \eqref{UE}, in the Riemannian setting (see also \cite{ST2} and \cite{BCS} for many reformulations of  \eqref{Sq} and an alternative proof of the equivalence with \eqref{due}).

We deduce the following result.

\begin{theorem}  Let $(M,d,\mu)$ be a doubling Riemannian manifold with \eqref{FKR}. Let us assume $RH_{q}(\Delta)$ for some $q>2$. Assume also that \eqref{due} holds for the Laplacian. Then there exists $\varepsilon>0$ such that for every $p\in(2,2+\varepsilon)$ and every map $A$ with $ \| A-\textrm{Id}\|_\infty \leq \varepsilon$ and $L_A$ self-adjoint, the properties $RH_{p}(L_A)$ and $(R_{p,A})$ are satisfied.
\end{theorem}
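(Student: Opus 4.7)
The plan is to combine three results already in the paper: stability of the reverse H\"older inequality under $L^\infty$-small perturbations (Proposition~\ref{prop:RHbis}), the passage from a reverse H\"older inequality to gradient bounds on the semigroup (Proposition~\ref{prop:RHGbis}), and the implication $(G_{p_0}) + (RH_{p_0}) \Rightarrow (R_p)$ for $p<p_0$ in the self-adjoint setting (Theorem~\ref{thm:rp}).

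First I would secure the hypothesis $p^{L_A}>\nu$ required by Proposition~\ref{prop:RHGbis}. Since \eqref{due} holds for the Laplacian on $(M,d,\mu)$, the doubling manifold supports full local Sobolev inequalities $(LS_q)$ for every $q>2$. For $A$ Hermitian with $\|A-\textrm{Id}\|_\infty$ small, standard perturbation arguments (of the type in \cite{memoirs}, using form perturbation and the equivalence $(LS_q)\Leftrightarrow \eqref{due}$) show that the self-adjoint semigroup $(e^{-tL_A})_{t>0}$ also admits a Gaussian upper heat kernel bound. Consequently $(p_{L_A},p^{L_A})=(1,\infty)$, so $p^{L_A}>\nu$. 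As a safety net, the auxiliary assumption \eqref{eq:RR2} invoked by Theorem~\ref{thm:rp} when $p^L\le \nu$ is anyway automatic for divergence-form $L_A$ with $\Gamma=|\nabla|$, as already noted after Lemma~\ref{lem:LM-ter}.

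Next I would apply Proposition~\ref{prop:RHbis}: there is $\varepsilon_0>0$ such that whenever $\|A-\textrm{Id}\|_\infty\leq \varepsilon_0$, $RH_p(L_A)$ holds for every $p\in(2,2+\varepsilon_0)$. This already yields the reverse H\"older conclusion of the theorem. Combining this with Step~1 and Proposition~\ref{prop:RHGbis}, I obtain $\varepsilon_1>0$ such that $(G_{p,A})$ holds for every $p\in(2,2+\varepsilon_1)$ provided $\|A-\textrm{Id}\|_\infty\leq \varepsilon_1$.

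Finally I would set $\varepsilon:=\tfrac{1}{2}\min(\varepsilon_0,\varepsilon_1)$. For an arbitrary $p\in(2,2+\varepsilon)$, pick $p_0\in(p, 2+\min(\varepsilon_0,\varepsilon_1))$. Both $RH_{p_0}(L_A)$ and $(G_{p_0,A})$ hold, $L_A$ is self-adjoint, and $p_0<p^{L_A}$, so Theorem~\ref{thm:rp} applied to $L_A$ delivers $(R_{p,A})$, completing the argument. The main obstacle I expect is Step~1: one must verify in a clean way that Gaussian kernel bounds (or at least the weaker condition $p^{L_A}>\nu$) are genuinely preserved under small $L^\infty$-perturbations of the coefficient matrix. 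This is standard but usually invoked rather than redone, and I would need to pinpoint a precise reference or write out the short form-perturbation argument to make the logical chain self-contained.
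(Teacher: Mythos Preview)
Your proposal is correct and follows essentially the same route as the paper: the paper's proof also combines Proposition~\ref{prop:RHbis}, Proposition~\ref{prop:RHGbis}, and Theorem~\ref{thm:rp}, after first establishing $p^{L_A}=\infty$. The only minor difference is in your Step~1: the paper observes more directly that since $(LS_q)$ is phrased purely in terms of $|\nabla f|$, the accretivity of $A$ alone (without any smallness) makes $(LS_q)$ for $-\Delta$ equivalent to $(LS_q)$ for $L_A$, whence \eqref{due} transfers via the $(LS_q)\Leftrightarrow\eqref{due}$ equivalence; your perturbation-based justification reaches the same conclusion but is slightly less sharp.
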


\begin{proof} Since $A$ satisfies an accretivity condition, it is known that \eqref{Sq} for $-\Delta$ implies \eqref{Sq} for $L_A$. Following the characterisation of upper estimates through Sobolev inequalities, we deduce that \eqref{due} for $-\Delta$ implies \eqref{due} for $L_A$ and so $p^L=\infty$. We then conclude by combining Theorem \ref{thm:rp} with Propositions \ref{prop:RHbis} and \ref{prop:RHGbis}. 
\end{proof}

\end{document}